\theoremstyle{definition}
\newtheorem{definition}{Definition}[section]
\theoremstyle{plain}
\newtheorem{prop}[definition]{Proposition}
\newtheorem{lemma}[definition]{Lemma}
\newtheorem{thm}[definition]{Theorem}
\newtheorem{cor}[definition]{Corollary}
\theoremstyle{remark}
\newtheorem{remark}[definition]{Remark}
\numberwithin{equation}{definition}
\renewcommand{\emptyset}{\varnothing}
\DeclareMathOperator{\Hom}{Hom}
\DeclareMathOperator{\End}{End}
\DeclareMathOperator{\affo}{aff}
\DeclareMathOperator{\fino}{fin}
\DeclareMathOperator{\domin}{dom}
\newcommand{\aff}[1]{#1^{\affo}}
\newcommand{\fin}[1]{#1^{\fino}}
\newcommand{\q}[1]{#1_q}
\newcommand{\mq}[1]{#1_{q^{-\frac{1}{2}}}}
\newcommand{\pmq}[1]{#1_{q^{\pm1}}}
\newcommand{\rtq}[1]{#1_{q^{\pm\frac{1}{2}}}}
\newcommand{\any}[1]{#1_{\bullet}}
\newcommand{\astot}[1]{#1_{\infty}}
\newcommand{\z}{\mathbb{Z}}
\newcommand{\zq}{\q{\z}}
\newcommand{\zmq}{\mq{\z}}
\newcommand{\zpmq}{\pmq{\z}}
\newcommand{\zrtq}{\rtq{\z}}
\newcommand{\zany}{\any{\z}}
\newcommand{\weyl}{W}
\newcommand{\affw}{\aff{\weyl}}
\newcommand{\gen}{S}
\newcommand{\affgen}{\aff{\gen}}
\newcommand{\fingen}{\fin{\gen}}
\newcommand{\hec}{\mathcal{H}}
\newcommand{\hecmq}{\mq{\hec}}
\newcommand{\hecpmq}{\pmq{\hec}}
\newcommand{\hecrtq}{\rtq{\hec}}
\newcommand{\hecany}{\any{\hec}}
\newcommand{\hecinf}{\astot{\hec}}
\newcommand{\hecirt}{\rtq{{\astot{\hec}}}}
\newcommand{\hecia}{\any{{\astot{\hec}}}}
\newcommand{\affhecrtq}{\aff{\hecrtq}}
\newcommand{\affhecany}{\aff{\hecany}}
\newcommand{\sch}{\mathcal{S}}
\newcommand{\schmq}{\mq{\sch}}
\newcommand{\schpmq}{\pmq{\sch}}
\newcommand{\schrtq}{\rtq{\sch}}
\newcommand{\schany}{\any{\sch}}
\newcommand{\schinf}{\astot{\sch}}
\newcommand{\schipm}{\pmq{{\astot{\sch}}}}
\newcommand{\schirt}{\rtq{{\astot{\sch}}}}
\newcommand{\schia}{\any{{\astot{\sch}}}}
\newcommand{\topcell}{\mathcal{B}}
\newcommand{\tcany}{\any{\topcell}^{\lambda}}
\newcommand{\tcrtq}{\rtq{\topcell}^{\lambda}}
\newcommand{\tcu}{\tilde{\topcell}}
\newcommand{\tcuany}{\any{\tcu}^{\lambda}}
\newcommand{\tcuq}{\q{\tcu}^{\lambda}}
\newcommand{\tcupmq}{\pmq{\tcu}^{\lambda}}
\newcommand{\tcurtq}{\rtq{\tcu}^{\lambda}}
\newcommand{\kln}{C}
\newcommand{\klu}{\tilde{\kln}}
\newcommand{\kla}{\hat{\kln}}
\newcommand{\scn}{h}
\newcommand{\scu}{\tilde{\scn}}
\newcommand{\asn}{t}
\newcommand{\asu}{\tilde{\asn}}
\newcommand{\ascn}{\gamma}
\newcommand{\ascu}{\tilde{\ascn}}
\title{Affine Hecke and Schur algebras of type A without a square root of q}
\author{Rose Berry}
\date{Max Plank Institute for Mathematics, Bonn}
\begin{document}

\maketitle

\begin{abstract}

We provide an affine cellular structure on the extended affine Hecke algebra and affine $q$-Schur algebra of type $A_{n-1}$ that is defined over $\z\left[q^{\pm1}\right]$, that is, without an adjoined $q^{\frac{1}{2}}$. This is with an eye to applications in the representation theory of $\mathrm{GL}_n(F)$ for a $p$-adic field $F$ over coefficient rings in which $p$ is invertible but does not have a square root, which have been a topic of recent interest. This is achieved via a renormalisation of the known affine cellular structure over $\z\left[q^{\pm\frac{1}{2}}\right]$ at each left and right cell, which is chosen to ensure that the diagonal intersections remain subalgebras and that the left and right cells remain isomorphic. We furthermore show that the affine cellular structure on the Schur algebra has idempotence properties which imply finite global dimension, an important ingredient for the applications to representations of $p$-adic groups.
    
\end{abstract}

\section{Introduction}

The extended affine Hecke algebra of type $\tilde{A}_{n-1}$ appears throughout representation theory. Most notably, its complex modules are equivalent to the principal block of the category of smooth complex representations of $\mathrm{GL}_n(F)$ for a $p$-adic field $F$ (\cite{bushnell1999semisimple}), and its anti-spherical module is isomorphic to the Grothendieck group of the principal block of rational representations of $\mathrm{GL}_n(k)$ for $k$ algebraically closed of characteristic $p$, as well as to the Grothendieck group of the principal block of representations of the affine quantum group $U(\hat{\mathfrak{gl}}_n)$ at a $p$-th root of unity (\cite{andersen1994representations}).

In extending these applications, the affine $q$-Schur algebra naturally arises. It is a quotient of both the group algebra of $\mathrm{GL}_n(F)$ (\cite{vigneras2003schur}), and a quotient of $U(\hat{\mathfrak{gl}}_n)$ (\cite{green1999affine}), in both cases via a double centraliser relation with the Hecke algebra. In the former, it describes a subcategory of the principal block of smooth representations over an algebraically closed field of characteristic $l\neq p$, and can be dg enhanced to describe the derived category of the whole block (\cite{berry2025derived}), while in the latter it can be used to give a cell structure on the modified quantum group (\cite{mcgerty2003cells}).

Both the Hecke and Schur algebras are defined over the integers adjoined a single abstract parameter $q$. However, traditionally both an inverse $q^{-1}$ and a square root $q^{\frac{1}{2}}$ are adjoined as well. This is because much of the theory, such as the construction of Lusztig's $a$-function (\cite{lusztig1985cells}) and the asymptotic algebra (\cite{lusztig1987cells}), require working over this extended ring. However, recent work in the study of $p$-adic groups (\cite{dat2009finitude,helm2024block,dat2025depth}) has considered coefficient rings in which $q$ does not have a square root. It thus makes sense to ask if the theory of Hecke and Schur algebras can be defined over this larger class of rings.

Specifically, we seek to define an affine cellular structure in the sense of \cite{koenig2012affine} on both the Hecke and Schur algebras over $\z\left[q^{\pm1}\right]$, and furthermore show that the latter satisfies additional idempotence properties that imply finite global dimension. This property is in ingredient in the author's proof in \cite{berry2025derived} that the derived principal block is equivalent to a dg enhanced Schur algebra, and so is a step in generalising this result to these more general coefficient rings. We follow the previous constructions of an affine cellular structure for both algebras (\cite{koenig2012affine, cui2015affine, cui2016affine}), dealing with obstructions as they arise. In the case of finite type $A$, the Schur algebra is already known to be quasihereditary, and hence cellular, without $q^{\frac{1}{2}}$ (\cite{du1998cells}). This is proven by normalising the usual isomorphisms between left cells in a two-sided cell to be defined integrally, and a similar argument forms a key step of our reasoning.

The extra difficulty posed on the affine setting is finding a suitable integral normalisation of the asymptotic algebra. The typical normalisation is defined using the normalised Kazhdan-Lusztig basis, which is only defined with an adjoined $q^{\frac{1}{2}}$. The module spanned by the unnormalised Kazhdan-Lusztig basis, which is defined integrally, is not a priori closed under multiplication. However, it turns out a closely related normalisation is closed, and thus provides a candidate for the asymptotic algebra in the affine cellular structure. Checking this amounts to understanding the lengths of the elements in each two-sided cell, and how they vary under multiplication.

Both above normalisation results depend on the existence of the adjoined inverse $q^{-1}$. It thus seems unlikely, at least with the existing approach, that this can also be removed. However, this is no great loss, as the relevant applications to $p$-adic groups only make sense in settings where $q^{-1}$ exists. Nonetheless, as cells themselves can be defined without $q^{-1}$, it would be interesting to see what structure they can be endowed with in full generality.

Care is taken throughout to distinguish between results proven for the affine Hecke algebra and those proven for the extended affine Hecke algebra, and to ensure that our constructions are correct in the generality we have stated them. We also try to present a complete overview of the construction, which was previously scattered throughout a considerable number of papers, which use differing conventions for the various objects within. Of particular note, we use the version of the Kazhdan-Lusztig basis that is a nonnegative linear combination of the standard basis, and we give the construction of the Schur algebra and its Kazhdan-Lusztig basis in terms of Coxeter group combinatorics instead of infinite matrices and perverse sheaves. 

Section two recalls the definitions and basic structure of the extended affine Weyl group, Hecke algebra, and Schur algebra. Section 3 recalls the definition of the normalised and unnormalised Kazhdan-Lusztig bases and of left, right, and two-sided cells, as well as existing results on their partial order and number. In Section 4 we recall the construction of the usual isomorphism between left cells in a two-sided cell, and show how it can be normalised in such a way as to be defined integrally. In Section 5 we recall the construction of the asymptotic algebra, as well as its known form, and then give an alternate normalisation which we prove is an integral subalgebra. In Section 6 we combine these two results with a final result, showing that left cells are a bimodule under the right action of our newly normalised asymptotic algebra, to give an affine cellular structure on the Hecke and Schur algebras. We furthermore show that the structure on the Schur algebra has idempotence properties that imply it has finite global dimension.

\section{The Weyl group, Hecke algebra, and Schur algebra}

In this section we recall the definitions of the extended affine Weyl group, Hecke algebra, and Schur algebra of type $\tilde{A}_{n-1}$. The extended affine Weyl group may be defined in multiple ways. We shall give the two that are relevant for our arguments.

\begin{definition}
    Let $n\in\z_{>0}$. The extended affine Weyl group of type $\tilde{A}_{n-1}$ is the group $\weyl$ of bijections $x:\z\rightarrow\z$ such that
    \begin{itemize}
        \item $x(i)+n=x(i)+n$ for all $i\in\z$, and
        \item $\sum_{i=1}^n\left(x\left(i\right)-i\right)\equiv0 \ \left(\text{mod} \ n\right)$
    \end{itemize}
    
    For $0\leq i\leq n$, let $s_i:\z\rightarrow\z$ be the map
    \begin{align*}
        s_i(j)=\begin{cases}
            j+1 \ &\text{if} \ j\equiv i \ \left(\text{mod} \ n\right) \\
            j-1 \ &\text{if} \ j\equiv i+1 \ \left(\text{mod} \ n\right) \\
            j \ &\text{if} \ j\not\equiv i,i+1 \ \left(\text{mod} \ n\right) \\
        \end{cases}
    \end{align*}
    Note that $s_i\in\weyl$ if and only if $n>1$, and that $s_0=s_n$.
    
    Let $\omega\in\weyl$ be the map \begin{align*}
        \omega(j)=j+1.
    \end{align*}
    
    Let $\affw$ be the subgroup of $\weyl$ generated by $\affgen=\left\{s_i\middle|0\leq i\leq n-1\right\}\cap\weyl$ and let $\Omega$ be the subgroup of $\weyl$ generated by $\omega$.
\end{definition}

\begin{prop}\label{weyl group affine decomposition} There is a second description for $\weyl$:
\begin{enumerate}
    \item $\affw$ is the affine Weyl group of type $\tilde{A}_{n-1}$. For $n>2$, this is the Coxeter group with generators $\affgen$ and relations
\begin{align*}
    s_is_{i+1}s_{i}&=s_{j}s_{i+1}s_{j} \ \quad \text{for} \ 0\leq i\leq n-1 \\
    s_is_j&=s_js_i \qquad \text{for} \ 0\leq i< j\leq n-1, \ j-i\notin \left\{1, n-1\right\} \\
    s_i^2&=1 \qquad \quad \text{for} \ 0\leq i\leq n-1.
\end{align*}
For $n=2$, this is instead the Coxeter group with the same generators but only the relation
\begin{align*}
        s_i^2&=1 \quad \text{for} \ 0\leq i\leq 1.
\end{align*}
For $n=1$, this is the trivial Coxeter group.
\item $\Omega\cong\z$.
\item $\omega s_{i}=s_{i+1}\omega$ for $0\leq i\leq n-1$.
\item $\weyl=\affw\rtimes\Omega$.
\item $\omega^n$ is central.
\end{enumerate}
\end{prop}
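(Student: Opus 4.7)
The plan is to treat parts (2), (3), and (5) by direct computation on bijections, then use an integer-valued homomorphism on $\weyl$ to pin down the semidirect product structure of part (4), leaving the identification of $\affw$ with the abstract Coxeter group of part (1) as the main obstacle.

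Parts (2), (3), (5) are essentially mechanical. For (2), the powers $\omega^k$ are the translations $j\mapsto j+k$, so $\Omega$ is infinite cyclic. For (5), the periodicity axiom $x(i+n)=x(i)+n$ on $\weyl$ says precisely that every $x\in\weyl$ commutes with the translation $\omega^n$. For (3), I would check the identity $\omega s_i(j)=s_{i+1}\omega(j)$ by case analysis on the residue class of $j$ mod $n$, using the explicit formulas; the braid and commutation relations of (1) follow the same kind of direct verification, as does $s_i^2=1$.

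The harder half of (1) is that the relations listed exhaust the group. My plan is to invoke the standard identification of $\affw$ with the affine symmetric group $\tilde{S}_n$: one realises the finite symmetric group $S_n=\langle s_1,\dots,s_{n-1}\rangle$ acting on each block $\{kn+1,\dots,kn+n\}$, exhibits translations by the root lattice $Q=\{\lambda\in\z^n:\sum\lambda_i=0\}$ inside the group generated by the $s_i$, and uses the semidirect product decomposition $\affw\cong S_n\ltimes Q$ to check that the resulting group has exactly the stated Coxeter presentation. I would cite a standard reference here (Shi, or Björner–Brenti) rather than redo the normal-form argument.

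For part (4), I would introduce the map $\sigma:\weyl\rightarrow\z$ defined by
\begin{align*}
\sigma(x)=\sum_{i=1}^n\bigl(x(i)-i\bigr).
\end{align*}
Periodicity makes the sum independent of the choice of representatives for $\z/n\z$, and from this one checks $\sigma(xy)=\sigma(x)+\sigma(y)$, so $\sigma$ is a homomorphism. Since $\sigma(s_i)=0$ we have $\sigma|_{\affw}=0$, while $\sigma(\omega)=n$ gives $\sigma(\Omega)=n\z$, so immediately $\affw\cap\Omega=\{1\}$. The sum condition in the definition of $\weyl$ says $\sigma(\weyl)\subseteq n\z$, so for any $x\in\weyl$ with $\sigma(x)=nk$ the element $\omega^{-k}x$ lies in $\ker\sigma$; the classical identification used in (1) gives $\ker\sigma=\affw$, producing the factorisation $\weyl=\affw\Omega$. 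Normality of $\affw$ in $\weyl$ is the content of part (3), so the product is a semidirect product as claimed. The only genuinely non-computational step in the whole proof is thus the appeal to the known structure of the affine symmetric group needed for (1) and hence for $\ker\sigma=\affw$ in (4).
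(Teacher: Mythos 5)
Your proof is correct, but it takes a genuinely different route from the paper's. The paper dispatches all five claims with a single citation to \cite[Theorem 2.1.3]{xi2002based}, offering no independent argument. You instead give a mostly self-contained proof built around the homomorphism $\sigma(x)=\sum_{i=1}^n(x(i)-i)$: periodicity makes $x(j)-j$ depend only on $j\bmod n$, which is exactly what makes $\sigma$ additive, and the values $\sigma(s_i)=0$, $\sigma(\omega)=n$ then give $\affw\cap\Omega=\{1\}$ and, together with $\ker\sigma=\affw$, the factorisation $\weyl=\affw\Omega$; normality is then inherited from your verification of part (3). This buys transparency about \emph{why} the semidirect product decomposition holds, and it cleanly isolates the one fact that genuinely requires citation, namely the Coxeter presentation of the affine symmetric group (and with it $\ker\sigma=\affw$), which is the same classical result feeding part (1). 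The cost is length: the paper's choice to cite a reference that already bundles all five statements is more economical, and since the proposition is pure background, that economy is defensible. One stylistic note: you introduce $\sigma$ as a name for your homomorphism, which collides with the $\sigma$ the paper later uses for the anti-involution on the asymptotic algebra; if this argument were to be included, the map should be renamed.
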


\begin{proof}
 The first, second and fourth claim are \cite[Theorem 2.1.3(c)]{xi2002based}, while the third and final claims are \cite[Theorem 2.1.3(b)]{xi2002based} and \cite[Theorem 2.1.3(a)]{xi2002based} respectively.
\end{proof}

We shall need the length function and the Bruhat order on $\weyl$ to construct our cellular structure.

\begin{definition}
A reduced expression for $w'\in\affw$ is a sequence of elements of $\affgen$ whose product is $w'$, and which contains a minimal number of $s\in\affgen$ among all such sequences.

The length $l(w)$ of $w=w'\omega^i$ is the number of $s\in\affgen$ occurring in some (hence any) reduced expression for $w'$.

If $v=v'\omega^j$ for $v'\in\affw$, we write $v\leq w$ precisely when both $i=j$ and any reduced expression for $w'$ has a subsequence that is a reduced expression for $v'$. This defines a partial order on $\weyl$.
\end{definition}



As we are in type $A_{n-1}$, the length function has an explicit formula, which we will also need.

\begin{prop}\label{Formula for length}
    $l(w)=\sum_{1\leq i<j\leq n}\left|\left\lfloor\frac{w(j)-w(i)}{n}\right\rfloor\right|$.
\end{prop}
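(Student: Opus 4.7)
My plan is to prove the formula by induction on $l(w)$, building $w$ up from a length-zero element by right multiplication by simple reflections. Every $w \in \weyl$ can be written as $w = \omega^i v$ with $v \in \affw$ of the same length, so fixing a reduced expression $v = s_{j_1} \cdots s_{j_r}$ and setting $w_k = \omega^i s_{j_1} \cdots s_{j_k}$ gives a chain with $l(w_k) = k$ and $w = w_r$. For the base case $w_0 = \omega^i$, one has $w_0(b) - w_0(a) = b - a \in \{1, \ldots, n-1\}$ for $1 \leq a < b \leq n$, so every floor vanishes and the right-hand side equals $0 = l(w_0)$. It therefore suffices to show that right multiplication by a simple reflection $s_k \in \affgen$, in a context where it increases length by one, always increases the right-hand side of the claimed formula by one.

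The key combinatorial input is that any $u \in \weyl$ induces a bijection on $\z/n\z$, so $u(b) - u(a)$ is never divisible by $n$ when $a \not\equiv b \pmod{n}$. For $1 \leq k \leq n-1$, right multiplication by $s_k$ simply transposes the window entries at positions $k$ and $k+1$. The terms in the sum indexed by $(a, k)$ and $(a, k+1)$ with $a < k$ (and symmetrically by $(k, b)$ and $(k+1, b)$ with $b > k+1$) permute in pairs and so sum invariantly, leaving only the $(k, k+1)$ term to genuinely change. Writing $u(k+1) - u(k) = qn + r$ with $1 \leq r \leq n-1$, that term passes from $|q|$ to $|q+1|$, which is a change of $\pm 1$ whose sign matches whether $u(k+1) > u(k)$, which is exactly the condition that $u s_k$ has length $l(u) + 1$.

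The case $k = 0$ is analogous but more delicate: right multiplication by $s_0$ replaces the window entries at positions $1$ and $n$ by $u(n) - n$ and $u(1) + n$ respectively, combining a swap with $\pm n$ shifts. The $(1, n)$ term changes from $|q|$ to $|q - 1|$, where $u(n) - u(1) = qn + r$ with $1 \leq r \leq n-1$, and the sign of the resulting $\pm 1$ change matches the ascent condition $u(0) < u(1)$, equivalently $qn + r < n$. The cancellation between pairs $(1, b)$ and $(b, n)$ for $2 \leq b \leq n-1$ reduces, after the $\pm n$ shifts, to the identity $|\lfloor -x/n \rfloor + 1| = |\lfloor x/n \rfloor|$, valid whenever $x$ is not divisible by $n$. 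Pairs $(a, b)$ with $\{a, b\} \cap \{1, n\} = \emptyset$ are obviously unaffected.

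The main calculational obstacle is the book-keeping of the $\pm n$ shifts in the $s_0$ case; everything else amounts to tracking which terms of the sum are touched by each multiplication and invoking the bijection-modulo-$n$ property to rule out degenerate cases.
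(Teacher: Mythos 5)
Your proof is correct as a piece of mathematics, and it is a genuinely different route from what the paper does: the paper simply cites \cite[Theorem 2.1.3(e)]{xi2002based}, so you are supplying the argument that the paper delegates to Xi. The calculation is right. The base case holds since the window of $\omega^i$ is an increasing run of consecutive integers, so every floor is zero. For $1\le k\le n-1$ the swap of positions $k,k+1$ permutes the pairs $(a,k)\leftrightarrow(a,k+1)$ and $(k,b)\leftrightarrow(k+1,b)$ term-for-term, and with $u(k+1)-u(k)=qn+r$, $1\le r\le n-1$, the lone changing term moves from $|q|$ to $|q+1|$, a jump of $+1$ exactly when $q\ge0$, i.e.\ $u(k)<u(k+1)$. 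The $s_0$ computation likewise checks out: the pairing of $(1,b)$ with $(b,n)$ together with the identity $\left|\left\lfloor -x/n\right\rfloor+1\right|=\left|\left\lfloor x/n\right\rfloor\right|$ for $n\nmid x$ shows the side terms cancel, and the $(1,n)$ term passes from $|q|$ to $|q-1|$, a $+1$ exactly when $q\le0$, i.e.\ $u(0)<u(1)$.

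The one thing you should be explicit about: your induction silently invokes the descent characterization
\[
  l(us_k)=l(u)+1 \iff u(k)<u(k+1)\quad(1\le k\le n-1),\qquad l(us_0)=l(u)+1 \iff u(0)<u(1),
\]
and this is not an immediate consequence of the paper's purely reduced-expression definition of length. It is standard, and you should either cite it or close the circle yourself. A clean way to close it within your own framework: write $I(w)$ for the right-hand side of the formula. Your step (b) shows $|I(us_k)-I(u)|=1$ for every simple reflection, so walking $\omega^i\to u$ along a reduced expression gives $I(u)\le l(u)$. For the converse, induct on $I(u)$: if $I(u)>0$ then $u\neq\omega^i$ for any $i$, hence some position has $u(k)>u(k+1)$ (or $u(0)>u(1)$), and by your own sign computation $I(us_k)=I(u)-1$; the inductive hypothesis gives $l(us_k)=I(us_k)$, so $l(u)\le l(us_k)+1=I(u)$. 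This repackaging uses only the elementary bound $|l(us)-l(u)|\le1$ rather than the precise descent criterion, and makes the argument self-contained. Either way, the approach buys a concrete, reference-free proof where the paper leans on Xi's book.
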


\begin{proof}
 This is \cite[Theorem 2.1.3(e)]{xi2002based}.
\end{proof}

Using the length function, we can define the Hecke algebra using the Iwahori-Matsumoto presentation.

\begin{definition}\label{Hecke algebra definition}
Let $\zq=\z\left[q\right]$, let $\zpmq=\z\left[q^{\pm 1}\right]$, and let $\zrtq=\z\left[q^{\pm\frac{1}{2}}\right]$, viewing each as an algebra over the previous by identifying the parameters $q$. Where we want to work simultaneously over all three rings, we shall write $\zany$.

The extended affine Hecke algebra of type $\tilde{A}_{n-1}$ is the $\zany$-algebra $\hecany$ with generators $\left\{T_{w}\middle|w\in\weyl\right\}$ subject to the relations:
\begin{align}\label{hecke algebra relations}
\begin{split}
        T_{v}T_{w}=T_{vw} \quad &\text{if} \ l(v)+l(w)=l(vw) \\
    T_{s}^2=(q-1)T_{s}+q \quad &\text{for} \ s\in\affgen
\end{split}
\end{align}


Let $\affhecany$ be the subalgebra of $\hecany$ generated by $\left\{T_{w}\middle|w\in\affw\right\}$.
\end{definition}

Like the Weyl group, the Hecke algebra can also be decomposed into affine and cyclic parts.

\begin{prop}\label{reducing to affine hecke algebra} We can describe $\hecany$ in terms of $\affhecany$ and $\Omega$:
\begin{enumerate}
    \item $\Omega$ is a multiplicative submonoid of $\hecany$ via the inclusion $\omega\mapsto T_{\omega}$.
    \item $T_{w\omega}=T_{w}\omega$ and $T_{\omega w}=\omega T_{w}$ for $w\in\weyl$.
    \item $\hecany=\affhecany\rtimes\Omega$.
    \item $\omega^n$ is central.
        \item $\affhecany$ is the $\zany$-algebra with generators $\left\{T_{w}\middle|w\in\affw\right\}$ subject to the relations \cref{hecke algebra relations}.
    \item $\hecany$ and $\affhecany$ are free $\zany$-modules with bases $\left\{T_{w}\middle|w\in\weyl\right\}$ and \\ $\left\{T_{w}\middle|w\in\affw\right\}$ respectively.
\end{enumerate}
\end{prop}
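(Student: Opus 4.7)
The plan is to leverage the key observation that $l(\omega^i) = 0$ for all $i \in \z$: by Definition 2.3, the length of $w = w'\omega^i$ counts only generators from $\affgen$ in a reduced expression for $w'$, so the purely $\Omega$-part contributes nothing. Combined with the decomposition $\weyl = \affw \rtimes \Omega$ from Proposition~\ref{weyl group affine decomposition}, this yields $l(w\omega^i) = l(w) = l(\omega^i w)$ for every $w \in \weyl$ and $i \in \z$.

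From this length identity, parts (1), (2), and (4) follow directly from the first Hecke relation $T_vT_w = T_{vw}$ when $l(v) + l(w) = l(vw)$: setting $v = \omega^i,\, w = \omega^j$ yields (1); applying it to $v = w,\, w = \omega^i$ (and symmetrically) gives (2); and combining (2) with Proposition~\ref{weyl group affine decomposition}(5), namely that $\omega^n$ is central in $\weyl$, gives $\omega^n T_w = T_{\omega^n w} = T_{w\omega^n} = T_w \omega^n$, proving (4).

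For (3), the unique factorisation $w = w'\omega^i$ together with part (2) writes $T_w = T_{w'}\omega^i$, so $\hecany = \affhecany \cdot \Omega$ as a $\zany$-module. To upgrade this to a semidirect product it suffices to show $\Omega$-stability of $\affhecany$: by part (2) and Proposition~\ref{weyl group affine decomposition}(3), $\omega T_{s_i}\omega^{-1} = T_{\omega s_i \omega^{-1}} \in \affhecany$, and since $\affhecany$ is generated as a $\zany$-algebra by $\{T_{s_i} : s_i \in \affgen\}$ (using the first Hecke relation on reduced expressions to write $T_w = T_{s_{i_1}}\cdots T_{s_{i_k}}$ for any $w \in \affw$), this generation is preserved under $\Omega$-conjugation. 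Part (5) follows by the same token: the quadratic relation is imposed by hypothesis, and the braid relations $T_{s_i}T_{s_{i+1}}T_{s_i} = T_{s_{i+1}}T_{s_i}T_{s_{i+1}}$ arise from the first Hecke relation applied to the two reduced expressions of a common element, with analogous treatment for the commuting pairs.

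The main obstacle is the linear independence half of part (6); spanning is routine, since $T_wT_s$ equals either $T_{ws}$ or $(q-1)T_w + qT_{ws}$ according as $l(ws)$ is greater or less than $l(w)$, and the general case follows by induction on $l(v)$ in $T_wT_v$. For independence, the standard device is to construct a faithful representation on the free $\zany$-module $M = \bigoplus_{w \in \weyl} \zany e_w$ via
\begin{align*}
T_s \cdot e_w &= \begin{cases} e_{sw} & \text{if } l(sw) > l(w), \\ (q-1)e_w + qe_{sw} & \text{if } l(sw) < l(w), \end{cases} \\
\omega \cdot e_w &= e_{\omega w},
\end{align*}
for $s \in \affgen$. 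Verifying that these operators satisfy the defining relations of $\hecany$ — the quadratic relation is immediate, the braid relations require the standard but careful length-based case analysis for Coxeter groups, and the compatibility $\omega \cdot T_{s_i} = T_{s_{i+1}} \cdot \omega$ follows from Proposition~\ref{weyl group affine decomposition}(3) — then gives a representation in which $T_w \cdot e_1 = e_w$ by induction on $l(w)$, forcing the $T_w$ to be $\zany$-linearly independent. The braid relation verification is the most delicate step, but the argument is identical to the finite case once restricted to each pair of generators.
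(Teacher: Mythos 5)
Parts (1)--(4) follow the paper's argument. For part (6) you take a genuinely different route: the paper cites Lusztig (Proposition 3.3 of his Hecke algebra book) for the standard basis of $\affhecany$ and then transfers that basis to $\hecany$ via the decomposition (3) and the unique factorisation $w = w'\omega^i$, whereas you propose to construct the regular representation on $M = \bigoplus_{w\in\weyl}\zany\, e_w$ directly, including the $\omega$-action, and read off independence from $T_w\cdot e_1 = e_w$. Both are valid; yours is self-contained but must actually carry out the braid-relation verification that the cited result encapsulates, and you only sketch it.

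Part (5), however, has a gap. Establishing that the quadratic and braid relations hold among the $T_s$ in $\affhecany$ gives you a surjection from the abstractly presented algebra onto $\affhecany$, but says nothing about injectivity: you have not shown that the presentation admits no further relations, which is what the claim ``$\affhecany$ \emph{is} the $\zany$-algebra with these generators and relations'' requires. The paper closes this in a different way. It takes an arbitrary instance of the first relation $T_vT_w = T_{vw}$, writes $v = v'\omega^i$, $w = w'\omega^j$ with $v',w'\in\affw$, and uses part (2) to rewrite it as $T_{v'}T_{\omega^i w'\omega^{-i}} = T_{v'\omega^i w'\omega^{-i}}$, where $\omega^i w'\omega^{-i}\in\affw$ has the same length as $w'$. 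Thus, modulo the semidirect data, the defining relations of $\hecany$ coincide exactly with the $\affw$-indexed family, which identifies $\hecany$ with the semidirect product of the abstractly presented algebra and $\Omega$ and hence identifies $\affhecany$ with that presented algebra. Alternatively, you could repair your argument by proving (6) first and then noting that the presented algebra is spanned by $\left\{T_w \mid w\in\affw\right\}$ (by the same length induction you use for the spanning half of (6)), so that your surjection takes a spanning set bijectively onto a basis and is therefore an isomorphism; this works, but it requires reordering (5) and (6), and your write-up presents (5) as if it stood on its own.
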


\begin{proof}
 The first and second claims follow immediately from \cref{hecke algebra relations} since $l(\omega)=0$. Together with \cref{weyl group affine decomposition} (4) and (5) these then give the third and fourth claims respectively. The second claim and \cref{weyl group affine decomposition} (4) also allow us to simplify the first relation to $T_{v}T_{\omega^iw\omega^{-i}}=T_{v\omega^iw\omega^{-i}}$ for $v,w\in\affw$ with $l(v)+l(w)=l(vw)$. But $l(w)=l(\omega w\omega^{-1})$ by \cref{weyl group affine decomposition} (3), so we may reduce this further to $T_{v}T_{w}=T_{vw}$ for $v,w\in\affw$ with $l(v)+l(w)=l(vw)$. Hence we have the fifth claim. The sixth claim for $\affhecany$ is then \cite[Proposition 3.3]{lusztig2003hecke}, and the sixth claim for $\hecany$ then follows by the third claim and \cref{weyl group affine decomposition} (4).
\end{proof}

The we shall give the definition of Schur algebra using certain elements of Hecke algebra related to parabolic subgroups of $\weyl$. We first recall the necessary definitions and properties of parabolic subgroups.

\begin{definition}
Let $\fingen=\affgen\backslash\left\{s_0\right\}$.

For $P\subseteq \fingen$, let $\weyl_P$ be the subgroup of $\weyl$ generated by $P$.
\end{definition}

\begin{prop}
    $\weyl_P$ is finite.
\end{prop}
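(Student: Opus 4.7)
The plan is to reduce to showing that $\weyl_{\fingen}$ itself is finite, from which the result follows immediately since any $P\subseteq\fingen$ yields $\weyl_P\subseteq\weyl_{\fingen}$.

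To bound $|\weyl_{\fingen}|$, I would first observe that each generator $s_i$ with $1\leq i\leq n-1$ stabilises the set $\left\{1,2,\ldots,n\right\}\subseteq\z$ setwise: reading off the defining formula, $s_i$ only moves elements congruent to $i$ or $i+1$ modulo $n$, and within the window $\left\{1,\ldots,n\right\}$ those are exactly the two elements $i$ and $i+1$, which get swapped. Consequently every $w\in\weyl_{\fingen}$ restricts to a permutation of $\left\{1,\ldots,n\right\}$, giving a group homomorphism $\weyl_{\fingen}\to\mathrm{Sym}\left(\left\{1,\ldots,n\right\}\right)$. For injectivity, the periodicity condition $w(j+n)=w(j)+n$ built into the definition of $\weyl$ shows that any $w\in\weyl$ is completely determined by its values on $\left\{1,\ldots,n\right\}$, so the restriction is faithful on $\weyl_{\fingen}$. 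Thus $|\weyl_{\fingen}|\leq n!<\infty$, and hence $|\weyl_P|\leq n!$ as well.

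The argument is elementary and no step poses a real obstacle; the only point requiring care is confirming the stabilisation claim in light of the periodic, partially wrap-around definition of $s_i$, and in particular noting that $s_0$ is precisely the one generator excluded from $\fingen$ which fails this property (it sends $1$ to $0$ and $n$ to $n+1$). As an alternative route, one could appeal directly to \cref{Formula for length}: since $w\bigl(\left\{1,\ldots,n\right\}\bigr)\subseteq\left\{1,\ldots,n\right\}$ for $w\in\weyl_{\fingen}$, each floor $\lfloor (w(j)-w(i))/n\rfloor$ lies in $\{-1,0\}$, so $l(w)\leq\binom{n}{2}$, and only finitely many elements of $\weyl$ can have length bounded by a constant.
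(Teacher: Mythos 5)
Your proof is correct, and it is essentially the route the paper has in mind: the paper's own justification is that $\weyl_P$ can be seen directly to be a product of finite symmetric groups, and your argument spells out the same recognition by embedding $\weyl_{\fingen}$ (hence every $\weyl_P$) faithfully into $\mathrm{Sym}(\{1,\dots,n\})$ via the observation that the $s_i$ with $1\leq i\leq n-1$ act as adjacent transpositions on $\{1,\dots,n\}$ and that periodicity makes restriction injective. One small caveat on your alternative route via \cref{Formula for length}: the bare claim that only finitely many elements of $\weyl$ have bounded length is false as stated, since every $\omega^i$ has length $0$; what makes the argument go through is that $\weyl_{\fingen}\subseteq\affw$, and in the Coxeter group $\affw$ with its finite generating set $\affgen$ the length function does have finite fibers.
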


\begin{proof}
 This follows form \cite[Section 1.20]{lusztig2003hecke}. Indeed, it is easy to see directly that $\weyl_P$ is a product of finite symmetric groups.
\end{proof}

Now we are ready to define the Schur algebra.

\begin{definition}
Let $x_P=\sum_{p\in \weyl_P}T_p$.

The affine $q$-Schur algebra is the $\zany$-algebra
\begin{align*}
    \schany=\End_{\hecany}\left(\bigoplus_{P\subseteq\fingen}x_P\hecany\right)=\bigoplus_{P,Q\subseteq\fingen}\Hom_{\hecany}\left(x_P\hecany,x_Q\hecany\right).
\end{align*}
\end{definition}

We conclude this section by giving the standard basis of the Schur algebra in terms of longest double coset representatives.

\begin{prop}\label{maximal length representative}
 For $P,Q\subseteq \fingen$ and $W_QwW_P\in\weyl_Q\backslash \weyl/ \weyl_P$, there is a unique $d\in W_QwW_P$ of maximal length. In particular, $\weyl_P$ has a unique element $w_P$ of maximal length.
\end{prop}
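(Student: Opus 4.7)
The plan is to reduce to the corresponding fact in the Coxeter group $\affw$ by factoring out the $\Omega$-part of $w$. Write $w=w'\omega^i$ for the unique $w'\in\affw$ and $i\in\z$ furnished by \cref{weyl group affine decomposition}(4). By \cref{weyl group affine decomposition}(3), conjugation by $\omega^i$ cyclically permutes $\affgen$, so $\omega^i\weyl_P\omega^{-i}=\weyl_{P'}$ for some $P'\subseteq\affgen$ of the same cardinality as $P$, giving
\begin{align*}
    \weyl_Q w\weyl_P = \left(\weyl_Q w'\weyl_{P'}\right)\omega^i.
\end{align*}
Since $l(y\omega^i)=l(y)$ by the definition of length on $\weyl$, right multiplication by $\omega^i$ is a length-preserving bijection, and it suffices to establish existence and uniqueness of a maximal-length element in $\weyl_Q w'\weyl_{P'}\subseteq\affw$.

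Since $|P'|=|P|\leq n-1$ and $|Q|\leq n-1$, both $P'$ and $Q$ are proper subsets of $\affgen$, so $\weyl_{P'}$ and $\weyl_Q$ are finite standard parabolic subgroups of the Coxeter group $\affw$ (removing any node from the $n$-cycle Dynkin diagram of type $\tilde{A}_{n-1}$ leaves finite type $A$ components). The double coset $\weyl_Q w'\weyl_{P'}$ is therefore finite, so a maximal-length element exists. For uniqueness I would invoke the standard Coxeter-theoretic fact that a double coset of finite standard parabolic subgroups in a Coxeter group contains a unique element of maximal length, proved most cleanly via the unique minimum-length double coset representative $d$ together with the uniqueness of the longest elements of the finite Coxeter groups $\weyl_{P'}$ and $\weyl_Q\cap d\weyl_{P'}d^{-1}$.

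The particular claim about $\weyl_P$ follows on taking $w=e$ and $Q=P$, giving $\weyl_P\cdot e\cdot\weyl_P=\weyl_P$; alternatively it is immediate from $\weyl_P$ being a finite Coxeter group. The main obstacle is the Coxeter uniqueness fact invoked above: while standard, a self-contained proof requires showing that if $x\in\weyl_Q w'\weyl_{P'}$ is of maximal length then $P'$ lies in its right descent set and $Q$ in its left descent set, forcing length-additive factorisations $x=a\cdot w_{P'}=w_Q\cdot b$, and then tracing through the minimum-length representative $d$ and the longest element of $\weyl_Q\cap d\weyl_{P'}d^{-1}$ to pin $x$ down uniquely.
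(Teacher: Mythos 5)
Your proof is correct, but it is more self-contained than the paper's. The paper's entire proof is a single citation to \cite[Proposition 9.15(e)]{lusztig2003hecke}, which establishes the uniqueness of the maximal-length double coset representative for finite parabolic subgroups in a Coxeter group. Since $\weyl$ is the \emph{extended} affine Weyl group rather than a Coxeter group, that citation implicitly relies on exactly the reduction you carry out: writing $w=w'\omega^i$, conjugating $\weyl_P$ by $\omega^i$ to move it to a parabolic $\weyl_{P'}\subseteq\affw$ while staying inside $\fingen$'s orbit under the cyclic shift, and noting that right multiplication by $\omega^i$ is a length-preserving bijection so that the problem descends to the genuine Coxeter group $\affw$. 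Your observation that $P'$ and $Q$ are proper subsets of $\affgen$ (since $|\fingen|=n-1<n=|\affgen|$), hence generate finite parabolics because deleting a node from the $\tilde{A}_{n-1}$ cycle yields finite type $A$, is exactly what makes Lusztig's proposition applicable. So the two proofs ultimately lean on the same Coxeter-theoretic fact; yours makes the $\Omega$-reduction explicit where the paper leaves it to the reader, which is a worthwhile gain in rigour, at the modest cost of needing to sketch (or cite) the standard argument for finite-parabolic double cosets that you flag at the end.
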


\begin{proof}
 This is \cite[Proposition 9.15(e)]{lusztig2003hecke}.
\end{proof}

\begin{definition}
For $P,Q\subseteq \fingen$, let $\prescript{Q}{}\weyl^{P}$ denote the set of maximal length representatives for the double cosets $\weyl_Q\backslash \weyl/ \weyl_P$.

For $w\in\prescript{Q}{}\weyl^{P}$ define $T_{QP}^w\in\Hom_{\hecany}\left(x_P\hecany,x_Q\hecany\right)\subseteq\schany$ by 
\begin{align*}
    x_Ph\mapsto \left(\sum_{w'\in\weyl_Qw\weyl_P}T_{w'}\right)h.
\end{align*}
\end{definition}

\begin{prop}\label{schur algebra basis} $\schany$ admits a nice basis extending that of $\hecany$:
\begin{enumerate} 
    \item $\schany$ is a free $\zany$-module with basis $\left\{T_{QP}^w\middle|P,Q\subseteq \fingen, w\in \prescript{Q}{}\weyl^{P} \right\}$.
    \item $W_{\emptyset}=\left\{1\right\}$. Hence, $\hecany$ is a subalgebra of $\schany$ via $T_{w}\mapsto T_{\emptyset\emptyset}^w$.
\end{enumerate}
\end{prop}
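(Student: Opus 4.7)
Part (2) is immediate: $W_\emptyset$ is generated by the empty set and so equals $\{1\}$, giving $x_\emptyset = T_1 = 1$ and $x_\emptyset\hecany = \hecany$. The map $T_w \mapsto T_{\emptyset\emptyset}^w$ is then left multiplication by $T_w$ on $\hecany$, which is the standard identification $\hecany \cong \End_\hecany(\hecany) \hookrightarrow \schany$.

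For (1), the plan is to identify $\Hom_\hecany(x_P\hecany, x_Q\hecany)$ with a concrete subspace of $x_Q\hecany$ and then extract a basis. Since $x_P\hecany$ is cyclic as a right $\hecany$-module, evaluation at $x_P$ embeds $\Hom_\hecany(x_P\hecany, x_Q\hecany)$ into $x_Q\hecany$ with image
\[ M_{QP} = \{ y \in x_Q\hecany \mid y T_s = qy \text{ for all } s \in P \}, \]
using the key identity $x_P T_s = q x_P$ for $s \in P$. This identity follows from pairing each $p \in W_P$ with $ps$, applying $T_s^2 = (q-1)T_s + q$ to the longer of the two and $T_w T_s = T_{ws}$ to the shorter, and checking that all cross-terms cancel.

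I would then describe $x_Q\hecany$ using the fact that each right coset $W_Q w$ contains a unique minimum-length element $\hat w$ with $l(q\hat w) = l(q) + l(\hat w)$ for all $q \in W_Q$, valid in the Coxeter group $\affw$ and extended to $\weyl$ via $\weyl = \affw \rtimes \Omega$. This gives $T_q T_{\hat w} = T_{q\hat w}$, hence $x_Q T_{\hat w} = \sum_{u \in W_Q \hat w} T_u$, and combined with \cref{reducing to affine hecke algebra}(6) yields $x_Q\hecany = \bigoplus_{\hat w} \zany \cdot x_Q T_{\hat w}$. So an element $y = \sum_u c_u T_u$ lies in $x_Q\hecany$ iff its coefficients $c_u$ are constant on left $W_Q$-cosets. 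A parallel pairing computation for right multiplication by $T_s$, $s \in P$, translates $yT_s = qy$ into $c_u = c_{us}$ for all $u$, i.e., constancy on right $W_P$-cosets.

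Combining, $y \in M_{QP}$ exactly when $c_u$ is constant on the $W_Q \backslash \weyl / W_P$ double cosets, which by \cref{maximal length representative} are enumerated by $\prescript{Q}{}\weyl^P$. So $M_{QP} = \bigoplus_{d \in \prescript{Q}{}\weyl^P} \zany \cdot y_d$ with $y_d = \sum_{w \in W_Q d W_P} T_w$; linear independence is clear since the $y_d$ have disjoint supports in $\{T_w\}$. Under the evaluation bijection, $y_d$ corresponds to $T_{QP}^d$, and summing over $P,Q$ gives the basis of $\schany$. The main technical point is the two pairing identities, both of which reduce to the same calculation: pairing $w$ with $ws$, applying the quadratic Hecke relation, and checking that contributions in the two pairing orientations combine to give a factor of $q$.
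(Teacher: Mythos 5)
The paper's proof of part (1) is a citation to Green's Theorem 2.2.4, and part (2) is declared immediate, so your proposal is a genuinely different route: a self-contained reconstruction of the basis theorem via the standard identification of $\Hom_{\hecany}(x_P\hecany, x_Q\hecany)$ with a submodule of $x_Q\hecany$. The outline is sound: the pairing computation giving $x_PT_s=qx_P$; the description of $x_Q\hecany$ by minimal coset representatives as the span of elements whose $T$-coefficients are constant on left $\weyl_Q$-cosets; the translation of $yT_s=qy$ into constancy on right $\weyl_P$-cosets (which works over $\zq$ too, since the coefficient comparison on the $T_{us}$-term yields $c_u=c_{us}$ directly, without cancelling $q$); and the passage to double cosets via \cref{maximal length representative}.

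The one step you assert without justification, and which does need an argument, is that evaluation at $x_P$ has image \emph{exactly} $M_{QP}$. The identity $x_PT_s=qx_P$ gives only the containment of the image in $M_{QP}$; for the converse one must show that, for $y\in M_{QP}$, the assignment $x_Ph\mapsto yh$ is well-defined, i.e.\ that $x_Pz=0$ forces $yz=0$. This is also precisely what makes the $T_{QP}^w$ in the paper's definition well-defined homomorphisms in the first place. One way to close it: write $\hec_P$ for the subalgebra spanned by $\left\{T_p\mid p\in\weyl_P\right\}$, and let $\epsilon\colon\hec_P\to\zany$, $T_w\mapsto q^{l(w)}$, be the index character; both $x_Ph=\epsilon(h)x_P$ and $yh=\epsilon(h)y$ hold for all $h\in\hec_P$ since they hold on the generators $T_s$. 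Decomposing $\hecany=\bigoplus_{\hat w}\hec_PT_{\hat w}$ over minimal representatives $\hat w$ of $\weyl_P\backslash\weyl$ and writing $z=\sum_{\hat w}h_{\hat w}T_{\hat w}$ gives $x_Pz=\sum_{\hat w}\epsilon(h_{\hat w})x_PT_{\hat w}$, which vanishes iff every $\epsilon(h_{\hat w})=0$ because the $x_PT_{\hat w}$ have disjoint $T$-support, and then $yz=\sum_{\hat w}\epsilon(h_{\hat w})yT_{\hat w}=0$ as required.
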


\begin{proof}
The first claim is \cite[Theorem 2.2.4]{green1999affine}. The second claim is immediate.
\end{proof}

\section{The Kazhdan-Lusztig basis and cells}

In this section we define the cells of the Hecke and Schur algebras, which form the first piece of the affine cellular structure. We define these as spans of subsets of the Kazhdan-Lusztig basis; hence we must first define this basis. The key ingredient is the bar involution, whose definition we now give.

\begin{prop}\label{bar involution}
Suppose $q^{-1}\in\zany$. Then

\begin{enumerate}
    \item $T_w$ is invertible.
    \item The $\z$-linear map $\hecany\rightarrow\hecany$ given by $q^{i}\mapsto q^{-i}$ and $T_{w}\mapsto T_{w^{-1}}^{-1}$ is an anti-involution.
\end{enumerate}
\end{prop}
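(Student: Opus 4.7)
My plan is to prove the two parts together since the bar map relies on inverses. For (1) I invert each generator explicitly and combine via reduced expressions. For (2) I define the map on the $\zany$-basis $\{T_w\}$ and verify it is compatible with the relations in \cref{hecke algebra relations}, then check its square is the identity.

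Since $q^{-1} \in \zany$, the quadratic relation $T_s^2 = (q-1)T_s + q$ rearranges to $T_s^{-1} = q^{-1}T_s + q^{-1} - 1$ for each $s \in \affgen$. For $\omega$, we have $l(\omega) = l(\omega^{-1}) = 0$, so the length-additive relation in \cref{hecke algebra relations} gives $T_\omega T_{\omega^{-1}} = T_1 = 1$, making $T_\omega^{-1} = T_{\omega^{-1}}$. For general $w \in \weyl$, \cref{weyl group affine decomposition} lets us write $w = s_{i_1}\cdots s_{i_k}\omega^j$ with the first $k$ letters forming a reduced expression in $\affw$; iterating length-additivity gives $T_w = T_{s_{i_1}}\cdots T_{s_{i_k}} T_\omega^j$, whose inverse is the product of the individual inverses in reverse order.

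The proposed map $\phi$ is well-defined $\z$-linearly by its action on the $\zany$-basis $\{T_w\}$ (\cref{reducing to affine hecke algebra}); multiplicative compatibility then reduces to checking the two defining relations. For the length-additive relation: since $l(x) = l(x^{-1})$ by \cref{Formula for length}, the hypothesis $l(v) + l(w) = l(vw)$ transports to $l(w^{-1}) + l(v^{-1}) = l((vw)^{-1})$, so $T_{w^{-1}}T_{v^{-1}} = T_{(vw)^{-1}}$ and hence $\phi(T_v)\phi(T_w) = (T_{w^{-1}}T_{v^{-1}})^{-1} = T_{(vw)^{-1}}^{-1} = \phi(T_{vw})$. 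For the quadratic relation, multiplying $T_s^2 = (q-1)T_s + q$ by $T_s^{-2}$ gives $T_s^{-2} = q^{-1} + (q^{-1}-1)T_s^{-1}$, which equals $\phi((q-1)T_s + q)$. Involutivity is then automatic: $\phi^2(T_w) = \phi(T_{w^{-1}})^{-1} = (T_w^{-1})^{-1} = T_w$ since ring maps preserve inverses, and $\phi^2(q^i) = q^i$, so $\phi^2 = \mathrm{id}$. The only real computation is the quadratic relation check, which is a short rearrangement; everything else is routine bookkeeping with the basis and the length function.
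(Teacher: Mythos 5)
Your proof takes a genuinely different route from the paper, which simply cites Lusztig (\cite[Section 3.5]{lusztig2003hecke} and \cite[Lemma 4.2]{lusztig2003hecke}) for the affine subalgebra and then extends to $\hecany$ via the semidirect-product decomposition of \cref{reducing to affine hecke algebra}. Your direct verification from the Iwahori--Matsumoto presentation is more self-contained, and both the inversion formula for $T_s$ and the quadratic-relation check $T_s^{-2}=q^{-1}+(q^{-1}-1)T_s^{-1}$ are correct. The one real gap is the sentence ``multiplicative compatibility then reduces to checking the two defining relations.'' You have defined $\phi$ only as a $\z$-linear map on the basis $\{T_w\}$; such a map does not become multiplicative merely because it is seen to be compatible with a set of relations. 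What is missing is either (a) an induction on $l(w)$ establishing $\phi(T_vT_w)=\phi(T_v)\phi(T_w)$ for arbitrary $v,w$ by peeling simple reflections off $w$, or (b) the standard setup in which one defines a ring homomorphism on the $\z$-algebra generators $q^{\pm1}$, $T_s$ ($s\in\affgen$), $T_\omega^{\pm1}$, checks that the braid, quadratic, and $\omega$-conjugation relations are preserved so that it descends to $\hecany$, and only then derives $\phi(T_w)=T_{w^{-1}}^{-1}$ from the factorization $T_w=T_{s_{i_1}}\cdots T_{s_{i_k}}T_\omega^j$. Your computations would slot directly into either framework, but as written the logical order is inverted.

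One further point worth flagging: what you actually verify is $\phi(T_v)\phi(T_w)=\phi(T_vT_w)$, i.e.\ that $\phi$ is a $q$-semilinear ring \emph{homomorphism}, not an order-reversing anti-homomorphism. That is indeed the correct property of the Kazhdan--Lusztig bar map (Lusztig's Lemma 4.2 proves exactly $\overline{hh'}=\bar{h}\bar{h'}$), so your computation is right; but it sits at odds with the word ``anti-involution'' in the statement, which the paper elsewhere uses for the genuinely order-reversing map $\iota$ of \cref{cellular involution}. Here ``anti'' refers to the semilinearity $q\mapsto q^{-1}$, not to reversal of the product --- be careful not to conflate the two, particularly when later checking bar-invariance of the Kazhdan--Lusztig basis.
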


\begin{proof}
 For $w\in\affw$, the first claim is \cite[Section 3.5]{lusztig2003hecke} and the second claim is \cite[Lemma 4.2]{lusztig2003hecke}. The claims for general $w\in\weyl$ then follow from \cref{reducing to affine hecke algebra} (1) and (2).
\end{proof}

Using this involution, we can define the Kazhdan-Lusztig polynomials, which in turn define the Kazhdan-Lusztig basis of the Hecke algebra.

\begin{prop}\label{kl basis}
There exists a unique set of elements
    \begin{align*}
        \left\{P_{y,w}\in\zq\middle|y,w\in\weyl, y\leq w\right\}
    \end{align*}
such that 
\begin{itemize}
    \item $P_{w,w}=1$ for all $w\in\weyl$,
    \item $\deg_q(P_{y,w})\leq \frac{1}{2}\left(l(w)-l(y)-1\right)$ whenever $y\neq w$
    \item The elements $\left\{\kln_w\middle|w\in\weyl\right\}$ defined by
    \begin{align*}
        \kln_w=q^{-\frac{l(w)}{2}}\sum_{y\leq w}P_{y,w}T_y
    \end{align*}
     are each invariant under the anti-involution of \cref{bar involution}.
\end{itemize}
Furthermore,
\begin{enumerate}
    \item $P_{y'\omega^j,w'\omega^{i}}=P_{y',w'}\delta_{j,i}$ for $y'w'\in\affw$. In particular, $\kln_{w'}\in\affhecrtq$.
    \item $\kln_{w'\omega^{i}}=\kln_{w'}\omega^{i}$ and $\kln_{\omega^iw'}=\omega^i\kln_{w'}$
    \item $\left\{\kln_w\middle|w\in\weyl\right\}$ is a basis of $\hecrtq$.
\end{enumerate}

\end{prop}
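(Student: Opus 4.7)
The plan is to first establish the result for $\affhecrtq$ using the standard Kazhdan--Lusztig induction on length, and then extend to $\hecrtq$ via the semidirect decomposition $\hecany = \affhecany \rtimes \Omega$ from \cref{reducing to affine hecke algebra}.

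For existence in $\affhecrtq$, I would induct on $l(w')$, starting with $\kln_1 = T_1$. Given $l(w')>0$, I pick $s \in \affgen$ with $l(sw') < l(w')$ and form $q^{-1/2}(T_s + 1)\kln_{sw'}$, which is bar-invariant and has leading term $q^{-l(w')/2}T_{w'}$. Following the classical KL argument, subtracting appropriate $\zrtq$-multiples of previously constructed $\kln_{y'}$ for $y' < w'$ adjusts the lower coefficients to meet the degree bound, yielding $\kln_{w'}$. Uniqueness is the standard triangularity argument: the difference of two candidates is bar-invariant and lies in the $q^{-1/2}\z[q^{-1/2}]$-span of $\{T_{y'} : y' < w'\}$, which must vanish by an inductive argument on the Bruhat order.

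To extend to $\weyl$, I would define $P_{y'\omega^j, w'\omega^i} := \delta_{i,j}P_{y',w'}$ and $\kln_{w'\omega^i} := \kln_{w'}\omega^i$. The defining conditions transfer from the affine case: $l(w'\omega^i) = l(w')$, the relation $y \leq w'\omega^i$ forces $y \in \affw\omega^i$ by definition of the Bruhat order, and bar-invariance of $\kln_{w'}\omega^i$ follows from that of $\kln_{w'}$ together with $\overline{\omega} = \omega$, which holds because $T_\omega T_{\omega^{-1}} = T_1$ gives $T_{\omega^{-1}} = T_\omega^{-1}$ and hence $\overline{T_\omega} = T_{\omega^{-1}}^{-1} = T_\omega$. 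Uniqueness across $\hecrtq$ decomposes along the $\Omega$-grading and reduces to the affine case.

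Property (1) and the first equation of (2) are immediate from the construction, and (3) follows from the unitriangularity of the change of basis from $\{T_w\}$ (a basis by \cref{reducing to affine hecke algebra}(6)) to $\{\kln_w\}$. For the second equation of (2), conjugation by $\omega$ is an algebra automorphism of $\hecany$ sending $T_{w'}$ to $T_{\omega w' \omega^{-1}}$ and preserving length (\cref{weyl group affine decomposition}(3)), Bruhat order, and bar; by uniqueness it must send $\kln_{w'}$ to $\kln_{\omega w' \omega^{-1}}$, so $\omega^i \kln_{w'} \omega^{-i} = \kln_{\omega^i w' \omega^{-i}}$. Writing $\omega^i w' = (\omega^i w' \omega^{-i})\omega^i$ and applying the first equation of (2) then yields $\kln_{\omega^i w'} = \omega^i \kln_{w'}$. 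The main obstacle is the affine KL induction, which is classical but technical; the remaining extension to $\weyl$ is essentially $\Omega$-bookkeeping.
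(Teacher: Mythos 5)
Your proposal follows the same structure as the paper's proof: reduce to $\affhecrtq$ (where the paper simply cites Kazhdan--Lusztig 1979 rather than re-running the length induction), extend $P_{y,w}$ and $\kln_w$ to $\weyl$ via the $\Omega$-grading, deduce uniqueness componentwise, and obtain $\kln_{\omega^i w'}=\omega^i\kln_{w'}$ by noting that conjugation by $\omega$ is a length- and bar-preserving automorphism of $\affhecrtq$ and invoking uniqueness. The remaining observations (the $\Omega$-grading of the Bruhat order, $\overline{T_\omega}=T_\omega$, unitriangularity for claim (3)) all match, so this is essentially the paper's argument.
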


\begin{proof}
 If $w'\in\affw$ and $P_{y,w'}\neq0$ then $y'\in\affw$. Hence that there exist unique $P_{y,w'}$ for $w'\in\affw$ satisfying the listed properties, and such that $\left\{\kln_{w'}\middle|w'\in\affw\right\}$ is a basis of $\affhecrtq$, follows from \cite[Theorem 1.1]{kazhdan1979representations}. Defining $P_{y,w'\omega^{i}}=P_{y\omega^{-i},w'}$ gives by \cref{reducing to affine hecke algebra} (2) that $C_{w'\omega^{i}}=C_{w'}\omega^{i}$, and hence that the $P_{y,w}$ satisfy the listed properties. The first claim follows by definition, and the third claim follows by \cref{reducing to affine hecke algebra} (3).
 
 To see that the $P_{y,w}$ are unique, observe that for any choice $P_{y,w}'$ satisfying the listed properties, we have for $w'\in\affw$ and by \cref{reducing to affine hecke algebra} (2) that $C_{w'\omega^{i}}'\omega^{-i}=\sum_{y'\leq w'}P_{y'\omega^i,w'\omega^i}'T_{y'}$, Since this is invariant under the anti-involution of \cref{bar involution} by assumption, writing $P_{y',w'}''=P_{y'\omega^i,w'\omega^i}'$ we get that \\ $\left\{P_{y',w'}''\in\zq\middle|w'\in\affw, y'\leq w'\right\}$ satisfies the listed properties, and so by uniqueness of the $P_{y,w'}$ for $w'\in\affw$ we must have that $P_{y'\omega^i,w'\omega^i}'=P_{y',w'}''=P_{y',w'}=P_{y'\omega^i,w'\omega^i}$.
 
To see that $\kln_{\omega^iw'}=\omega^i\kln_{w'}$, observe that $P_{\omega^iy,\omega^{i}w'}=P_{\omega^iy\omega^{-i},\omega^iw'\omega^{-i}}$, and so by \cref{reducing to affine hecke algebra} (2) we are reduced to showing that $P_{\omega y'\omega^{-1},\omega w'\omega^{-1}}=P_{y',w'}$. But by \cref{reducing to affine hecke algebra} (3) the conjugation action of $\omega$ is an automorphism of $\affhecrtq$. Furthermore, by \cref{weyl group affine decomposition} (3) it is length-preserving, and by \cref{reducing to affine hecke algebra} (2) it commutes with the anti-involution of \cref{bar involution}, so the $P_{\omega y'\omega^{-1},\omega w'\omega^{-1}}$ satisfy all three conditions above, and hence by uniqueness of the $P_{y',w'}$ we have $P_{\omega y'\omega^{-1},\omega w'\omega^{-1}}=P_{y',w'}$.
\end{proof}

We also use the Kazhdan-Lusztig polynomials to define the Kazhdan-Lusztig basis for the Schur algebra, as well as a renormalised variant of the basis which is defined over $\zq$. This latter basis will be how we define the cell structure over $\zq$.

\begin{definition}
For $w\in\weyl$, let $\klu_w=q^{\frac{l(w)}{2}}\kln_w=\sum_{y\leq w}P_{y,w}T_y$.

For $P,Q\subseteq\fingen$ and $w\in \prescript{Q}{}\weyl^{P} $, let $\klu_{QP}^w=\sum_{y\leq w}P_{y,w}T_{QP}^{y}$ and let $\kln_{QP}^w=q^{\frac{l(w_P)-l(w)}{2}}\klu_{QP}^w$.
\end{definition}

\begin{lemma}These give bases:
\begin{enumerate}
    \item $\left\{\klu_w\middle|w\in\weyl\right\}$ is a basis of $\hecany$.
    \item $\left\{\klu_{QP}^w\middle| P,Q\subseteq\fingen, w\in \prescript{Q}{}\weyl^{P} \right\}$ and \\ $\left\{\kln_{QP}^w\middle| P,Q\subseteq\fingen, w\in \prescript{Q}{}\weyl^{P} \right\}$ are bases of $\schany$ and $\schrtq$ respectively.
    \item The inclusion $\hecany\rightarrow\schany$ sends $\klu_w\mapsto\klu_{\emptyset\emptyset}^w$
    \item The inclusion $\hecrtq\rightarrow\schrtq$ sends $\kln_w\mapsto\kln_{\emptyset\emptyset}^w$.
\end{enumerate}
\end{lemma}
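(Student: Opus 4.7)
The plan is to prove all four claims by a routine unitriangularity argument. The defining expressions $\klu_w = T_w + \sum_{y<w}P_{y,w}T_y$ and $\klu_{QP}^w = T_{QP}^w + \sum_{y<w,\, y\in\prescript{Q}{}\weyl^{P}}P_{y,w}T_{QP}^y$ each express the proposed basis elements as an upper unitriangular combination (with respect to any linear extension of the Bruhat order compatible with length) of a known basis with coefficients in $\zq\subseteq\zany$. Since upper unitriangular matrices over $\zq$ are invertible over $\zq$, invertibility of the change-of-basis over $\zany$ is automatic, so I just need to identify the known basis in each case and verify the triangular form.

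For claim (1), I would cite Proposition~\ref{reducing to affine hecke algebra} (6) to get that $\{T_w\mid w\in\weyl\}$ is a $\zany$-basis of $\hecany$, then apply the observation above using $P_{w,w}=1$ from Proposition~\ref{kl basis}. Claim (3) is then immediate: by Proposition~\ref{schur algebra basis} (2) the inclusion sends $T_y\mapsto T_{\emptyset\emptyset}^y$, and since $\weyl_\emptyset=\{1\}$ we have $\prescript{\emptyset}{}\weyl^{\emptyset}=\weyl$, so applying the inclusion termwise to $\klu_w$ produces exactly the sum defining $\klu_{\emptyset\emptyset}^w$. Claim (4) reduces to claim (3) once I note that $l(w_\emptyset)=l(1)=0$, so $\kln_{\emptyset\emptyset}^w=q^{-l(w)/2}\klu_{\emptyset\emptyset}^w$, matching $\kln_w=q^{-l(w)/2}\klu_w$ under the inclusion.

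For claim (2), I would invoke Proposition~\ref{schur algebra basis} (1) to get that $\{T_{QP}^w\}$ is a $\zany$-basis, and apply the same triangularity argument within each fixed pair $(Q,P)$, interpreting the index range $y\leq w$ in the definition as implicitly restricted to $y\in\prescript{Q}{}\weyl^{P}$ (the set on which $T_{QP}^y$ is defined). The $\kln_{QP}^w$ assertion then follows because these differ from the $\klu_{QP}^w$ by the nonzero scalar factor $q^{(l(w_P)-l(w))/2}$, which is a unit in $\zrtq$. The only point requiring any care is the indexing convention in the sum defining $\klu_{QP}^w$; once this is pinned down, the entire lemma is formal, and I expect no genuine obstacle beyond bookkeeping.
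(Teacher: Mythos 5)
Your proposal is correct and follows essentially the same route as the paper: both arguments rest on the unitriangularity (with $1$'s on the diagonal, by $P_{w,w}=1$) of the change-of-basis matrix from $\left\{T_w\right\}$ (resp. $\left\{T_{QP}^w\right\}$) to $\left\{\klu_w\right\}$ (resp. $\left\{\klu_{QP}^w\right\}$), the unit rescaling for the $\kln$ version, and the compatibility with the inclusion via Proposition~\ref{schur algebra basis}~(2). Your explicit attention to the indexing convention (restricting $y\leq w$ to $y\in\prescript{Q}{}\weyl^{P}$) and to $l(w_{\emptyset})=0$ for claim~(4) is a bit more careful than the paper's ``immediate,'' but it is the same argument.
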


\begin{proof}
 By \cref{reducing to affine hecke algebra} (6) and \cref{kl basis}, the set $\left\{\klu_w\middle|w\in\weyl\right\}$ is related to the basis $\left\{T_w\middle|w\in\weyl\right\}$ by a matrix in $\zq$ that is triangular with respect to $\leq$ and has $1$ on the diagonal. Thus we have the first claim. That $\left\{\klu_{QP}^w\middle| P,Q\subseteq\fingen, w\in \prescript{Q}{}\weyl^{P} \right\}$ is a basis follows by the same logic, as again by \cref{schur algebra basis} (1) and \cref{kl basis} it is related to the basis \\ $\left\{T_{QP}^w\middle|P,Q\subseteq \fingen, w\in \prescript{Q}{}\weyl^{P} \right\}$ by a matrix in $\zq$ that is triangular with respect to $\leq$ and has $1$ on the diagonal. Then, as $\kln_{QP}^w=q^{\frac{l(w_P)-l(w)}{2}}\klu_{QP}^w$, we immediately get the rest of the second claim. The last two claims are immediate by \cref{schur algebra basis} (2).
\end{proof}

We also make some observations about the structure coefficients of the Kazhdan-Lusztig basis of the Hecke algebra, which we will need later.

\begin{definition}
    We write $\scn_{u,v}^w$ for the coefficient of $\kln_w$ in the product $\kln_u\kln_v$. Similarly, write $\scu_{u,v}^w$ for the coefficient of $\klu_w$ in the product $\klu_u\klu_v$.
\end{definition}
Thus, $\scu_{u,v}^w=q^{\frac{l(u)+l(v)-l(w)}{2}}\scn_{u,v}^w$. Note that $\scu_{u,v}^w\in\zq$, but in general we only have $\scn_{u,v}^w\in\zrtq$.

\begin{lemma}\label{structure constant facts}
Let $u',v'\in\affw$. Then $\scn_{u'\omega^i ,v'\omega^j}^{w}=0$ unless $w=w'\omega^{i+j}$ for $w'\in\affw$, and in this case $\scn_{u'\omega^i ,v'\omega^j}^{w}=\scn_{u',\omega^i v'\omega^{-i} }^{w'}$.

Furthermore, $\scn_{\omega^{i}u'\omega^{-i} ,\omega^{i}v'\omega^{-i}}^{\omega^{i}w''\omega^{-i}}=\scn_{u',v' }^{w'}$.

Finally, these equations also hold with $\scn$ replaced with $\scu$.
\end{lemma}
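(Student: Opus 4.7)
The approach is to unpack the multiplication of Kazhdan--Lusztig basis elements, using the identities $\kln_{w'\omega^{i}}=\kln_{w'}\omega^{i}$ and $\kln_{\omega^{i}w'}=\omega^{i}\kln_{w'}$ from \cref{kl basis} (2) together with the fact that $\omega$-conjugation preserves $\affw$ and its length function (\cref{weyl group affine decomposition} (3)). The whole lemma should fall out of a direct computation; the only thing to watch is bookkeeping of $\omega$-factors and powers of $q$.

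First I would compute
\begin{align*}
\kln_{u'\omega^{i}}\kln_{v'\omega^{j}}=\left(\kln_{u'}\omega^{i}\right)\left(\kln_{v'}\omega^{j}\right)=\kln_{u'}\left(\omega^{i}\kln_{v'}\omega^{-i}\right)\omega^{i+j}=\kln_{u'}\kln_{\omega^{i}v'\omega^{-i}}\omega^{i+j},
\end{align*}
where the last equality uses both parts of \cref{kl basis} (2). Now $\kln_{u'}$ and $\kln_{\omega^{i}v'\omega^{-i}}$ both lie in $\affhecrtq$ by \cref{kl basis} (1), so by \cref{reducing to affine hecke algebra} (3) we may expand
\begin{align*}
\kln_{u'}\kln_{\omega^{i}v'\omega^{-i}}=\sum_{w'\in\affw}\scn_{u',\omega^{i}v'\omega^{-i}}^{w'}\kln_{w'},
\end{align*}
and right-multiplying by $\omega^{i+j}$ and applying \cref{kl basis} (2) once more turns the right-hand side into $\sum_{w'\in\affw}\scn_{u',\omega^{i}v'\omega^{-i}}^{w'}\kln_{w'\omega^{i+j}}$. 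Comparing with the definition of the structure constants on the left immediately yields both halves of the first assertion, since the elements $\kln_{w'\omega^{i+j}}$ form a linearly independent subset of the basis.

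For the $\omega$-conjugation formula, I would write
\begin{align*}
\kln_{\omega^{i}u'\omega^{-i}}\kln_{\omega^{i}v'\omega^{-i}}=\left(\omega^{i}\kln_{u'}\omega^{-i}\right)\left(\omega^{i}\kln_{v'}\omega^{-i}\right)=\omega^{i}\kln_{u'}\kln_{v'}\omega^{-i},
\end{align*}
expand $\kln_{u'}\kln_{v'}=\sum_{w'}\scn_{u',v'}^{w'}\kln_{w'}$, and conjugate termwise using \cref{kl basis} (2) to get $\sum_{w'}\scn_{u',v'}^{w'}\kln_{\omega^{i}w'\omega^{-i}}$, which identifies the structure constants as claimed.

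Finally, to transfer all three equalities from $\scn$ to $\scu$, I would use $\scu_{u,v}^{w}=q^{\frac{l(u)+l(v)-l(w)}{2}}\scn_{u,v}^{w}$ and check that the relevant length differences coincide: for the first equality, $l(u'\omega^{i})+l(v'\omega^{j})-l(w'\omega^{i+j})=l(u')+l(v')-l(w')=l(u')+l(\omega^{i}v'\omega^{-i})-l(w')$ because $l(x\omega^{k})=l(x)$ by definition and $\omega$-conjugation preserves length by \cref{weyl group affine decomposition} (3); the analogous check for the second equality is even simpler. There is no real obstacle here — the computation is entirely mechanical once the commutation identities of \cref{kl basis} (2) are used to move $\omega$-factors past Kazhdan--Lusztig basis elements.
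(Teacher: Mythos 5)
Your proof is correct and follows essentially the same route as the paper: the paper's proof is a one-line citation of \cref{kl basis} (1) and (2) and \cref{reducing to affine hecke algebra} (3) for the first two claims and the length formula for the $\scu$ transfer, and your computation simply spells out exactly how those facts combine.
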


\begin{proof}
 The first part follows from \cref{kl basis} (1) and (2) and \cref{reducing to affine hecke algebra} (3). The second follows by \cref{kl basis} (2). The final claim follows by the same logic, or by noting that $\scu_{u,v}^w=q^{\frac{l(u)+l(v)-l(w)}{2}}\scn_{u,v}^w$ and that multiplication by $\omega$ does not change the lengths of elements of $\weyl$.
\end{proof}

We also want to make similar observations about the structure coefficients of the Kazhdan-Lusztig basis of the Schur algebra. For this, we need the Poincar\'e polynomial of a parabolic subgroup of $\weyl$.

\begin{definition}
    Let $P\subseteq\fingen$. The Poincare polynomial of $P$ is $p_P=\sum_{w\in W_P}q^{l(w)}$.
\end{definition}

We also need the following proposition, which explains our choice to use maximal coset representatives.

\begin{prop}\label{kl basis and idempotents}
    $w$ is a maximal length representative of $W_QwW_P$ if and only if $T_s\klu_w=q\klu_w$ and $\klu_wT_t=q\klu_w$ for all $s\in Q$ and $t\in P$.

The basis element $\klu_{QP}^w$ is exactly the map $x_Ph\mapsto \klu_{w}h$. 
\end{prop}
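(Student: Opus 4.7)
The plan is to reduce both claims to the Kazhdan--Lusztig identities
\begin{equation*}
T_s \klu_w = q \klu_w \iff sw < w \quad \text{and} \quad \klu_w T_t = q \klu_w \iff wt < w.
\end{equation*}
Necessity is a direct coefficient comparison: if $sw > w$, then expanding $T_s \klu_w$ via the quadratic relation, the coefficient of $T_{sw}$ on the left is $P_{w, w} = 1$ (contributed by $T_s T_w = T_{sw}$), while on the right it is $0$ since $sw \not\leq w$, a contradiction. Sufficiency comes from the standard identity $\kln_s \kln_w = (q^{1/2} + q^{-1/2}) \kln_w$ valid whenever $sw < w$, see \cite{kazhdan1979representations}, rescaled via $T_s + 1 = q^{1/2}\kln_s$ and $\klu_w = q^{l(w)/2}\kln_w$.

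Given this, the first claim reduces to the combinatorial fact that $w$ is the maximal-length representative of $\weyl_Q w \weyl_P$ if and only if $sw < w$ for all $s \in Q$ and $wt < w$ for all $t \in P$. The forward direction is immediate. The converse is standard: the descent conditions yield decompositions $w = w_Q w' = w'' w_P$ with additive lengths, from which one deduces $l(q_1 w q_2) \leq l(w)$ for all $q_1 \in \weyl_Q$ and $q_2 \in \weyl_P$.

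For the second claim, a closer look at the same coefficient analysis shows that $T_s \klu_w = q \klu_w$ is in fact equivalent to $P_{sy,w} = P_{y,w}$ for all $y \leq w$ (with $sy \leq w$ automatic by the Bruhat lifting property whenever $sw < w$). Iterating this over $s \in Q$ and the right-hand analogue over $t \in P$, and using the lifting property to propagate the relation $\leq w$ across the actions of $\weyl_Q$ and $\weyl_P$, we conclude that for each maximal representative $y \in \prescript{Q}{}\weyl^P$ with $y \leq w$, the entire double coset $\weyl_Q y \weyl_P$ lies below $w$ and $P_{z, w} = P_{y, w}$ is constant on it. Regrouping $\klu_w = \sum_{z \leq w} P_{z,w} T_z$ by these double cosets yields
\begin{align*}
\klu_w = \sum_{\substack{y \in \prescript{Q}{}\weyl^P \\ y \leq w}} P_{y, w} \sum_{z \in \weyl_Q y \weyl_P} T_z = \klu_{QP}^w(x_P),
\end{align*}
the second equality being the definitions of $T_{QP}^y$ and $\klu_{QP}^w$. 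Since $\klu_{QP}^w$ is right $\hecany$-linear and $x_P \hecany$ is cyclically generated by $x_P$, this identifies $\klu_{QP}^w$ with the map $x_P h \mapsto \klu_w h$.

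The main obstacle is the sufficiency direction of the Kazhdan--Lusztig identity, equivalently the assertion that $P_{sy, w} = P_{y, w}$ for all $y \leq w$ when $sw < w$; once it (and its right analogue) are in hand, the remainder is routine bookkeeping relying only on the Bruhat lifting property and descent-set combinatorics of the extended affine Weyl group.
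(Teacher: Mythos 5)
Your proof takes essentially the same route as the paper: both reduce the first claim to the characterization of maximal double coset representatives by descent conditions together with the Kazhdan--Lusztig eigenvector property $T_s\klu_w = q\klu_w \iff sw < w$, and both prove the second claim by using the invariance $P_{sy,w}=P_{y,w}$ (for $s\in Q$, $sw<w$, $y<sy\leq w$) and its right analogue to regroup $\klu_w=\sum_{y\leq w}P_{y,w}T_y$ along $(\weyl_Q,\weyl_P)$-double cosets. The only difference is that the paper cites these sub-lemmas directly from Lusztig's book, whereas you sketch their proofs; the logical content and decomposition are the same.
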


\begin{proof}
By \cite[Lemma 9.8]{lusztig2003hecke}, the element $w$ is maximal length in $W_QwW_P$ precisely when $sw<w$ for all $s\in Q$ and $ws<w$ for all $s\in P$. By \cite[Theorem 6.6]{lusztig2003hecke} we have that $sw<w$ if and only if $T_s\kln_w=q\kln_w$, and $ws<w$ if and only if $\kln_wT_s=q\kln_w$.

Since $sw<w$ for all $s\in Q$, we have by \cite[Theorem 6.6]{lusztig2003hecke} again that if $y<sy\leq w$ then $P_{y,w}=P_{sy,w}$. Similarly, if $t\in P$ and $y<yt\leq w$ then $P_{y,w}=P_{yt,w}$. Hence if $y\leq w$ and $x\in W_QyW_P$ then $P_{y,w}=P_{x,w}$. Therefore $\klu_{QP}^w(x_P)=\sum_{y\leq w}P_{y,w}T_{QP}^y(x_P)=\sum_{x\leq w}P_{x,w}T_{x}=\klu_{w}$.


\end{proof}


Now we can give the Schur algebra analog of \cref{structure constant facts}.

\begin{lemma}\label{schur algebra structure coefficients}
The coefficient $\scu_{uRv}^{QwP}$ of $\klu_{QP}^w$ in the product $\klu_{QR}^u\klu_{RP}^v$ is $p_{R}^{-1}\scu_{u,v}^w$, and the coefficient $\scn_{uRv}^{QwP}$ of $\kln_{QP}^w$ in the product $\kln_{QR}^u\kln_{RP}^v$ is $p_{R}^{-1}q^{\frac{l(w_R)}{2}}\scn_{u,v}^w$.
\end{lemma}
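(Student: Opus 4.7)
The plan is to use the identification from \cref{kl basis and idempotents} of $\klu_{QP}^w$ with the $\hecany$-linear map $x_Ph\mapsto\klu_wh$ and to compose the two basis maps directly on the generator $x_P$. I will first establish the key identity $p_R\,\klu_{QR}^u(\klu_v)=\klu_u\klu_v$ inside $\hecany$. Since $v\in\prescript{R}{}\weyl^P$, \cref{kl basis and idempotents} gives $T_s\klu_v=q\klu_v$ for all $s\in R$; writing each $p\in\weyl_R$ via a reduced expression and iterating yields $T_p\klu_v=q^{l(p)}\klu_v$, so summing over $p$ gives $x_R\klu_v=p_R\klu_v$. Applying the $\hecany$-linear map $\klu_{QR}^u$ (which sends $x_R$ to $\klu_u$ by \cref{kl basis and idempotents}) then produces
\begin{align*}
    p_R\,\klu_{QR}^u(\klu_v)=\klu_{QR}^u(x_R\klu_v)=\klu_u\klu_v.
\end{align*}

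Next I will expand both sides in the Kazhdan-Lusztig basis of $\hecany$. On one hand, $\klu_u\klu_v=\sum_{w'\in\weyl}\scu_{u,v}^{w'}\klu_{w'}$. On the other, writing the composition in the basis $\{\klu_{QP}^w\}_{w\in\prescript{Q}{}\weyl^P}$ of $\Hom_{\hecany}(x_P\hecany,x_Q\hecany)$ as $\klu_{QR}^u\klu_{RP}^v=\sum_{w\in\prescript{Q}{}\weyl^P}\scu_{uRv}^{QwP}\klu_{QP}^w$ and evaluating at $x_P$ gives $\klu_{QR}^u(\klu_v)=\sum_w\scu_{uRv}^{QwP}\klu_w$. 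Combining with the key identity,
\begin{align*}
    \sum_{w'\in\weyl}\scu_{u,v}^{w'}\klu_{w'}=p_R\sum_{w\in\prescript{Q}{}\weyl^P}\scu_{uRv}^{QwP}\klu_{w}.
\end{align*}
Linear independence of the $\klu_{w'}$ in $\hecany$ forces $\scu_{u,v}^{w'}=0$ for $w'\notin\prescript{Q}{}\weyl^P$ and $\scu_{u,v}^w=p_R\,\scu_{uRv}^{QwP}$ for $w\in\prescript{Q}{}\weyl^P$, giving the first claim. (Incidentally, this shows $p_R$ divides $\scu_{u,v}^w$ in $\zany$ whenever $w\in\prescript{Q}{}\weyl^P$.)

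The $\kln$ version then follows by routine $q^{1/2}$-power bookkeeping: substituting the rescalings $\kln_{QP}^w=q^{(l(w_P)-l(w))/2}\klu_{QP}^w$ (and the analogues for $\kln_{QR}^u$ and $\kln_{RP}^v$) together with $\scu_{u,v}^w=q^{(l(u)+l(v)-l(w))/2}\scn_{u,v}^w$ into the already-proved $\klu$-identity, the two $l(w_P)$ contributions (from $\kln_{RP}^v$ and from converting $\klu_{QP}^w$ to $\kln_{QP}^w$) cancel, and the length contributions in $u,v,w$ cancel against the $\scu/\scn$ rescaling, leaving the single surviving factor $q^{l(w_R)/2}$. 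The one nontrivial step in the whole argument is the key identity $p_R\,\klu_{QR}^u(\klu_v)=\klu_u\klu_v$; the rest is linear algebra and careful bookkeeping with the definitions.
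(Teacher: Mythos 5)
Your proof is correct and follows essentially the same route as the paper's: both use the identity $x_R\klu_v = p_R\klu_v$ (which comes from $T_s\klu_v=q\klu_v$ for $s\in R$, via \cref{kl basis and idempotents}) to relate the operator composite $\klu_{QR}^u\klu_{RP}^v$ to the ring product $\klu_u\klu_v$, and then deduce the $\kln$ version by rescaling. The paper phrases the first step as the operator identity $x_R\klu_{RP}^v=p_R\klu_{RP}^v$ rather than evaluating at $x_P$ as you do, but the content is identical.
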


\begin{proof}
 By \cref{kl basis and idempotents} the map $\klu_{QR}^ux_R\klu_{RP}^v$ sends $x_Ph$ to $\klu_u\klu_vh=\scu_{u,v}^w\klu_wh$, so  $\klu_{QR}^ux_R\klu_{RP}^v=\sum_{w\in\weyl}\scu_{u,v}^w\klu_{QP}^w$. But again by \cref{kl basis and idempotents} we have that $x_R\klu_{RP}^v$ sends $x_Ph$ to $x_R\klu_v h=p_R\klu_v h$, so $x_R\klu_{RP}^v=p_R\klu_{RP}^v$.
 
 The second claim follows from the first since $\scu_{u,v}^w=q^{\frac{l(u)+l(v)-l(w)}{2}}\scn_{u,v}^w$ and $\kln_{QP}^w=q^{\frac{l(w_P)-l(w)}{2}}\klu_{QP}^w$.
\end{proof}

Before proceeding further, we recall the cellular involution of the Hecke and Schur algebras. This is both necessary for the affine cellular structure and will simplify the following exposition.

\begin{prop}\label{cellular involution}
The $\zany$-linear map $\iota:\schany\rightarrow\schany$ given by $\klu_{QP}^w\mapsto\klu_{PQ}^{w^{-1}}$ is an anti-involution of $\schany$. It restricts to an anti-involution $\klu_w\mapsto\klu_{w^{-1}}$ of $\hecany$.
\end{prop}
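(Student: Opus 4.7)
The plan is to establish the anti-involution at the Hecke-algebra level and then bootstrap to $\schany$ via \cref{schur algebra structure coefficients}.

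First I would check that the $\zany$-linear map $\hecany\to\hecany$ sending $T_w\mapsto T_{w^{-1}}$ is a well-defined anti-involution. Since $l(w^{-1})=l(w)$ (\cref{Formula for length}), the relation $T_vT_w=T_{vw}$ for $l(vw)=l(v)+l(w)$ is preserved in reverse order, the quadratic relation is invariant because each $s\in\affgen$ is its own inverse, and the involution property is immediate from $(w^{-1})^{-1}=w$. The key subsidiary step is showing that this map sends $\klu_w$ to $\klu_{w^{-1}}$, which reduces to the symmetry $P_{y,w}=P_{y^{-1},w^{-1}}$ of Kazhdan-Lusztig polynomials. I would prove this by working over $\zrtq$ and invoking the uniqueness in \cref{kl basis}: the map $T_w\mapsto T_{w^{-1}}$ commutes with the bar involution of \cref{bar involution} (both composites send $T_w$ to $T_w^{-1}$), so bar-invariance of $\kln_{w^{-1}}$ transports to bar-invariance of $q^{-l(w)/2}\sum_{y\leq w}P_{y^{-1},w^{-1}}T_y$. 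The degree bound and the normalisation $P_{w^{-1},w^{-1}}=1$ transfer directly, so uniqueness forces $P_{y,w}=P_{y^{-1},w^{-1}}$; this identity lives in $\zq$ and so descends to any $\zany$.

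To extend to $\schany$, note that $\iota$ on the basis $\left\{\klu_{QP}^w\right\}$ is $\zany$-linear and visibly an involution. Products $\klu_{Q_1P_1}^u\klu_{Q_2P_2}^v$ vanish unless $P_1=Q_2$, as do their $\iota$-images, so it suffices to check the anti-homomorphism identity on products $\klu_{QR}^u\klu_{RP}^v$. By \cref{schur algebra structure coefficients},
\begin{align*}
    \iota\left(\klu_{QR}^u\klu_{RP}^v\right)&=\sum_w p_R^{-1}\scu_{u,v}^w\klu_{PQ}^{w^{-1}}, \\
    \iota\left(\klu_{RP}^v\right)\iota\left(\klu_{QR}^u\right)&=\klu_{PR}^{v^{-1}}\klu_{RQ}^{u^{-1}}=\sum_{w'}p_R^{-1}\scu_{v^{-1},u^{-1}}^{w'}\klu_{PQ}^{w'},
\end{align*}
and equality reduces to $\scu_{u,v}^w=\scu_{v^{-1},u^{-1}}^{w^{-1}}$, which is precisely the image of $\klu_u\klu_v=\sum_w\scu_{u,v}^w\klu_w$ under the Hecke-algebra anti-involution from the previous step. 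The restriction to $\hecany$ sends $\klu_w=\klu_{\emptyset\emptyset}^w$ to $\klu_{\emptyset\emptyset}^{w^{-1}}=\klu_{w^{-1}}$, as required.

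The main obstacle is the polynomial symmetry $P_{y,w}=P_{y^{-1},w^{-1}}$, which is where the Kazhdan-Lusztig combinatorics enters; once it is in hand, the Schur-algebra claim follows formally from the structure-coefficient comparison.
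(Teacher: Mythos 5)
Your proof is correct, and it follows the same overall architecture as the ``alternative'' route the paper sketches in its own proof: establish the anti-involution on $\hecany$ first, then pass to $\schany$ via \cref{schur algebra structure coefficients}. The difference is at the Hecke-algebra step. The paper simply cites \cite[Section 5.6]{lusztig2003hecke} for the claim that $\klu_w\mapsto\klu_{w^{-1}}$ is an anti-involution on $\affhecany$ and then extends from $\affw$ to $\weyl$ using \cref{kl basis}~(2). You instead give a self-contained argument: you verify that $T_w\mapsto T_{w^{-1}}$ respects the Iwahori--Matsumoto relations, observe that it commutes with the bar involution of \cref{bar involution} (both composites being the $\z$-algebra homomorphism $T_w\mapsto T_w^{-1}$, $q^{\frac{1}{2}}\mapsto q^{-\frac{1}{2}}$), and deduce $P_{y,w}=P_{y^{-1},w^{-1}}$ from the uniqueness in \cref{kl basis}; the resulting identity lives in $\zq$ and descends to every $\zany$. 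This is somewhat more explicit, and has the small advantage of working directly on all of $\weyl$ rather than proving the affine case and extending by $\omega$. The one ingredient you use implicitly that the paper does not record is the standard fact that the Bruhat order is preserved by inversion, $y\leq w$ if and only if $y^{-1}\leq w^{-1}$, which you need when reindexing $\sum_{z\leq w^{-1}}P_{z,w^{-1}}T_{z^{-1}}$ as $\sum_{y\leq w}P_{y^{-1},w^{-1}}T_y$; this is easily checked from the paper's definition of $\leq$ together with length-preservation of conjugation by $\omega$, so it is not a gap, but is worth flagging. Both arguments are sound: the paper's is shorter by delegating to references, yours is more transparent.
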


\begin{proof}
 This is \cite[Equation 1.7(a)]{lusztig1999aperiodicity}. Alternatively, the claim for $\hecany$ and $w\in\affw$ is \cite[Section 5.6]{lusztig2003hecke}, and its extension to all $w\in\weyl$ follows from \cref{kl basis} (2). The claim for $\schany$ then follows by \cref{schur algebra structure coefficients}.
\end{proof}

We define cells as subsets of the Kazhdan-Lusztig basis. These are equivalence classes which are in turn defined from various preorders. We now present the definition of these preorders.

\begin{definition}
    An ideal (left, right, or two-sided) in $\hecany$ is called based if it is a free module with a basis of elements of the form $\klu_w$. Similarly, an ideal (left, right, or two-sided) in $\schany$ is called based if it has a basis of elements of the form $\klu_{QP}^w$.

    We write $v\leq_Lw$ (respectively $v\leq_Rw$, respectively $v\leq_{LR}w$) on $v,w\in\weyl$ if $\klu_{u}$ is contained in the based left (respectively right, respectively two-sided) ideal of $\hecany$ generated by $\klu_{w}$.

    Similarly, we write $\klu_{Q'P'}^v\leq_L\klu_{QP}^w$ (respectively $\klu_{Q'P'}^v\leq_R\klu_{QP}^w$, respectively $\klu_{Q'P'}^v\leq_{LR}\klu_{QP}^w$) if $\klu_{Q'P'}^v$ is contained in the based left (respectively right, respectively two-sided) ideal of $\schany$ generated by $\klu_{QP}^w$.
\end{definition}

Observe that these are preorders, that they do not depend on the choice of $\zany$, and that, for $\zrtq$, using $\kln$ instead of $\klu$ gives the same order. Furthermore, $\leq_{LR}$ is the join of $\leq_{L}$ and $\leq_{R}$, and, by applying the involution $\iota$, we have $v\leq_L w$ if and only if $v^{-1}\leq_{R}w^{-1}$, and $\klu_{Q'P'}^v\leq_L\klu_{QP}^w$ if and only if $\klu_{P'Q'}^{v^{-1}}\leq_R\klu_{PQ}^{w^{-1}}$. We also make the following observation on how these behave on the affine subalgebra.

\begin{lemma}\label{cells on the affine subgroup}
    We have for $v',w'\in\affw$ that $\omega^iv'\leq_L \omega^jw'$ (respectively $v'\omega^i\leq_R w'\omega^j$, respectively $\omega^iv'\omega^{i'}\leq_{LR} \omega^jw'\omega^{j'}$) if and only if $v'\leq_L w'$ (respectively $v'\leq_R w'$, respectively $v'\leq_{LR}w'$).
    
    Define preorders $\leq_{L}'$, $\leq_{R}'$, and $\leq_{LR}'$ on $\affw$ using based ideals in $\affhecany$ instead of $\hecany$. Then these are the restrictions to $\affw$ of  $\leq_L$, $\leq_R$, and $\leq_{LR}$ respectively.
\end{lemma}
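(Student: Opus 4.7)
The plan is to exploit the semidirect product decomposition $\hecany = \affhecany \rtimes \Omega$ from \cref{reducing to affine hecke algebra} together with the commutation formulas $\klu_{\omega^i w'} = \omega^i \klu_{w'}$ and $\klu_{w' \omega^i} = \klu_{w'} \omega^i$, which extend \cref{kl basis} (2) from $\kln$ to $\klu$ using that $l(\omega^i w') = l(w')$. Crucially, each $\omega^i$ is a unit in $\hecany$, and conjugation by $\omega$ is a length-preserving automorphism of $\affw$ sending $\klu_{x'} \in \affhecany$ to $\klu_{\omega x' \omega^{-1}}$, hence inducing an order-preserving automorphism of $(\affw, \leq_L', \leq_R', \leq_{LR}')$.

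For the $\leq_L$ case, I would decompose the left ideal $\hecany \klu_w$ for $w = \omega^b w''$ with $w'' \in \affw$ by \cref{weyl group affine decomposition} (4), writing $\hecany = \bigoplus_l \omega^l \affhecany$ and commuting $\omega$'s through $\klu_{w''}$ to obtain
\[
    \hecany \klu_w = \bigoplus_{m \in \z} \affhecany \klu_{\omega^m w'' \omega^{-m}} \omega^m.
\]
Writing $v = \omega^a v''$ similarly, the basis element $\klu_v = \klu_{\omega^a v'' \omega^{-a}} \omega^a$ lies in this iff $\omega^a v'' \omega^{-a} \leq_L' \omega^a w'' \omega^{-a}$ in $\affhecany$; conjugating by $\omega^{-a}$ reduces this to $v'' \leq_L' w''$. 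Both claims for $\leq_L$ follow at once: the first with $v'' = v'$, $w'' = w'$, the second with $a = b = 0$. The $\leq_R$ case is symmetric, either by the parallel computation using $\hecany = \bigoplus_l \affhecany \omega^l$, or by applying the anti-involution $\iota$ of \cref{cellular involution}, which interchanges $\leq_L$ with $\leq_R$.

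For $\leq_{LR}$, the first claim is immediate: the two-sided ideal $\hecany \klu_w \hecany$ is unchanged when $w$ is replaced by $\omega^j w \omega^{j'}$ (absorbing the units $\omega^j, \omega^{j'}$ into $\hecany$), and likewise $\omega^i \klu_v \omega^{i'} = \klu_{\omega^i v \omega^{i'}}$ lies in it iff $\klu_v$ does. For the second claim, the analogous two-sided decomposition gives $\hecany \klu_{w'} \hecany = \bigoplus_m J^\Omega \omega^m$, where $J^\Omega$ is the smallest $\Omega$-conjugation-stable two-sided ideal of $\affhecany$ containing $\klu_{w'}$, so that $v' \leq_{LR} w'$ in $\hecany$ iff $v' \leq_{LR}' \omega^l w' \omega^{-l}$ for some $l \in \z$. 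The main obstacle will be reducing this to $v' \leq_{LR}' w'$, equivalently showing that each two-sided cell of $\affw$ is stable under $\omega$-conjugation; this follows since $\omega$-conjugation is a Coxeter automorphism of $\affw$ preserving lengths, Bruhat order and hence Kazhdan-Lusztig polynomials, combined with the intrinsic combinatorial classification of two-sided cells in type $\tilde{A}_{n-1}$.
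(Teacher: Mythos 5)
Your proof is correct and takes essentially the same route as the paper: the routine parts follow from the decomposition $\hecany=\affhecany\rtimes\Omega$ and absorption of the units $\omega^i$, and the genuinely nontrivial step — that two-sided cells of $\affw$ are stable under $\omega$-conjugation — is exactly what the paper handles by citing Shi's explicit classification of cells in type $\tilde{A}_{n-1}$ (\cite[Theorem 17.4]{shi1986kazhdan}), matching your final appeal to the combinatorial description. One small clarification: the fact that $\omega$-conjugation preserves lengths, the Bruhat order, and hence the Kazhdan--Lusztig basis only shows that it \emph{permutes} two-sided cells; pinning down that it \emph{fixes} each one really does require the combinatorial description, so make sure that dependence is stated as load-bearing rather than incidental.
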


\begin{proof}
The first claim is immediate from \cref{reducing to affine hecke algebra} (2). The second follows immediately for $\leq_L$ and $\leq_R$ by \cref{reducing to affine hecke algebra} (2) and (3). For $\leq_{LR}$ however more work is required: we must show that $\omega^{i}w\omega^{-i}\leq_{LR}'w$ for $i=\pm1$. This can be seen by the explicit description of cells in \cite[Theorem 17.4]{shi1986kazhdan}.
\end{proof}

Now we can define the equivalence relations which will give the cells.

\begin{definition}
        We write $\sim_L$ (respectively $\sim_R$, respectively $\sim_{LR}$) for the equivalence relation given by the meet of $\leq_L$ and $\geq_L$ (respectively, $\leq_R$ and $\geq_R$, respectively $\leq_{LR}$ and $\geq_{LR}$).
\end{definition}

The following easy lemma and will be of technical importance later.

\begin{lemma}\label{omega to the n}
    $\klu_w\sim_{L}\omega^n \klu_w\sim_R \klu_w$ for any $\klu_w$.
\end{lemma}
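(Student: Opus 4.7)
The plan is to exploit two facts about $\omega^n$: it is central in $\hecany$ by \cref{reducing to affine hecke algebra} (4), and it is invertible, since $\omega\mapsto T_\omega$ embeds the group $\Omega$ into $\hecany$ (with $T_\omega^{-1}=T_{\omega^{-1}}$). By \cref{kl basis} (2) we have $\omega^n\klu_w=\klu_{\omega^n w}$, so $\omega^n\klu_w$ is itself a single Kazhdan--Lusztig basis element, and the lemma reduces to showing $w\sim_L\omega^n w$ and $w\sim_R\omega^n w$ in the preorders on $\weyl$.

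For $\sim_L$, both directions follow directly from the definition. The inequality $\omega^n w\leq_L w$ holds because $\klu_{\omega^n w}=\omega^n\klu_w$ lies in the left ideal $\hecany\cdot\klu_w$, and being a single Kazhdan--Lusztig basis element it lies in the based left ideal generated by $\klu_w$. Conversely, $w\leq_L\omega^n w$ follows by left multiplication by $\omega^{-n}$: $\klu_w=\omega^{-n}\klu_{\omega^n w}$ lies in the left ideal generated by $\klu_{\omega^n w}$, and hence by the same observation in the based one.

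The equivalence $\omega^n\klu_w\sim_R\klu_w$ then follows by the identical argument performed on the right, using the centrality of $\omega^n$ to write $\omega^n\klu_w=\klu_w\omega^n$, which realises $\omega^n\klu_w$ inside the right ideal generated by $\klu_w$, and right multiplying by $\omega^{-n}$ for the reverse direction. I do not foresee a real obstacle here; the only point worth flagging is that the ``based left ideal generated by $\klu_w$'' should be read as the smallest based left ideal containing $\klu_w$, which exists because intersections of based ideals are based by linear independence of the Kazhdan--Lusztig basis, and any single basis element $\klu_v$ lying in the unbased left ideal $\hecany\cdot\klu_w$ automatically lies in this smallest based ideal.
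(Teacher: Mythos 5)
Your proof is correct and takes essentially the same route as the paper, which disposes of the lemma in one line by citing the centrality of $\omega^n$ from \cref{reducing to affine hecke algebra} (4); you simply make explicit the supporting facts (invertibility of $\omega^n$, the identity $\omega^n\klu_w=\klu_{\omega^n w}$ from \cref{kl basis} (2), and that the based left ideal generated by $\klu_w$ contains $\hecany\klu_w$) that the paper leaves implicit. One minor simplification: since any based left ideal containing $\klu_w$ is a left ideal and hence contains all of $\hecany\klu_w$, the observation about $\omega^n\klu_w$ being a single basis element is not actually needed for the inclusion.
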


\begin{proof}
 This is immediate from \cref{reducing to affine hecke algebra} (4).
\end{proof}

We can also relate the equivalence relations on the Schur algebra to those on the Hecke algebra.

\begin{prop}\label{hecke and shcur cell relations}
The equivalence relations on $\hecany$ and $\schany$ are related:
\begin{enumerate}
    \item $\klu_{QP}^w\sim_L\klu_{Q'P'}^v$ if and only if $P=P'$ and $w\sim_{L} v$.
    \item $\klu_{QP}^w\sim_R\klu_{Q'P'}^v$ if and only if $Q=Q'$ and $w\sim_{R} v$.
    \item $\klu_{QP}^w\sim_{LR}\klu_{Q'P'}^v$ if and only if $w\sim_{LR} v$.
\end{enumerate}
Thus we have a correspondence between left cells in $\schany$ and pairs $(\Gamma,P)$, where $\Gamma$ is a left cell in $\hecany$ and $P\subseteq\fingen$, and similarly for right and two-sided cells.
\end{prop}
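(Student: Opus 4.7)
The plan is to reduce all three parts to the corresponding Hecke algebra relations on $w$ and $v$, via two intermediate facts.

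The first fact I would establish is that $\klu_{QP}^w \sim_L \klu_{\emptyset P}^w$ (and symmetrically $\klu_{QP}^w \sim_R \klu_{Q\emptyset}^w$) for every basis element. The key identity is $\klu_{w_Q}\klu_w = p_Q\klu_w$: this combines the standard fact $\klu_{w_Q} = x_Q$ (all the Kazhdan-Lusztig polynomials $P_{y,w_Q}$ equal $1$ when $w_Q$ is the longest element of a finite parabolic) with the relation $T_s\klu_w = q\klu_w$ for $s \in Q$ given by \cref{kl basis and idempotents}, since $w \in \prescript{Q}{}\weyl^P$. Feeding this into \cref{schur algebra structure coefficients} yields $\klu_{\emptyset Q}^{w_Q}\klu_{QP}^w = \klu_{\emptyset P}^w$ and $\klu_{Q\emptyset}^{w_Q}\klu_{\emptyset P}^w = p_Q\klu_{QP}^w$, giving both inequalities of the $\sim_L$ equivalence. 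The corresponding $\sim_R$ statement follows by applying the involution $\iota$ of \cref{cellular involution}.

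The second fact is that for $w,v \in \prescript{\emptyset}{}\weyl^P$, one has $\klu_{\emptyset P}^w \leq_L \klu_{\emptyset P}^v$ in $\schany$ iff $w \leq_L v$ in $\hecany$. The forward direction is immediate from \cref{schur algebra structure coefficients}, since each step of a Schur chain uses a nonzero Hecke structure constant $\scu_{u,\cdot}^\cdot$. For the reverse, I would first note that the condition ``$ws<w$ for all $s \in P$'' is preserved under the Hecke $\leq_L$-preorder: if $w \leq_L v$, writing $\klu_w = \sum_h c_h h \klu_v$ from the based left ideal of $\klu_v$, the identity $\klu_v T_s = q\klu_v$ (from $vs<v$, which holds for $s \in P$ since $v \in \prescript{\emptyset}{}\weyl^P$, via \cref{kl basis and idempotents}) propagates to $\klu_w T_s = q\klu_w$, hence $ws<w$. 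Therefore any Hecke chain $v = w_0,\ldots,w_k = w$ realizing $w \leq_L v$ stays in $\prescript{\emptyset}{}\weyl^P$, and each step translates to a Schur step by left multiplying with $\klu_{\emptyset\emptyset}^{u_i}$, which is valid since $u_i \in \weyl = \prescript{\emptyset}{}\weyl^\emptyset$ and $p_\emptyset = 1$.

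Given these facts, part (1) follows: the forward direction uses that \cref{schur algebra structure coefficients} preserves the lower-right index (so $P = P'$), combined with the Hecke structure constants along the chain to extract $w \sim_L v$; the reverse chains $\klu_{QP}^w \sim_L \klu_{\emptyset P}^w \sim_L \klu_{\emptyset P}^v \sim_L \klu_{Q'P}^v$. Part (2) follows from (1) by applying $\iota$. For part (3), the forward direction observes that each step of an $\sim_{LR}$-chain in $\schany$ is either $\sim_L$ or $\sim_R$, so by (1) or (2) forces the corresponding Hecke relation on $w$; for the reverse, I would chain $\klu_{QP}^w \sim_{LR} \klu_{\emptyset\emptyset}^w \sim_{LR} \klu_{\emptyset\emptyset}^v \sim_{LR} \klu_{Q'P'}^v$, where the endpoints come from the first intermediate fact and the middle chain translates a Hecke $\sim_{LR}$-chain between $w$ and $v$ via the second fact applied with $P = \emptyset$. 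The main obstacle I anticipate is the monotonicity step in the second fact: ensuring that any Hecke chain realizing $w \leq_L v$ stays within $\prescript{\emptyset}{}\weyl^P$, so that each intermediate element corresponds to a valid Schur basis element $\klu_{\emptyset P}^{w_i}$. Without this, the Hecke chain cannot be lifted to a Schur chain with fixed second parabolic.
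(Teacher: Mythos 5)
The paper disposes of this proposition with a one-line citation to \cite[Proposition 3.8]{mcgerty2003cells}; you instead supply a self-contained argument built from \cref{kl basis and idempotents} and \cref{schur algebra structure coefficients}, which is a genuinely different route. Your reduction is clean and, I believe, correct: the identity $\klu_{w_Q}=x_Q$ together with $x_Q\klu_w=p_Q\klu_w$ pins down $\scu_{w_Q,w}^x=p_Q\delta_{xw}$, so $\klu_{\emptyset Q}^{w_Q}\klu_{QP}^w=\klu_{\emptyset P}^w$ and $\klu_{Q\emptyset}^{w_Q}\klu_{\emptyset P}^w=p_Q\klu_{QP}^w$ give the $\sim_L$-equivalence $\klu_{QP}^w\sim_L\klu_{\emptyset P}^w$ (the appearance of the non-unit $p_Q$ is harmless because the preorder only asks for nonzero coefficients in the based ideal); the Schur versus Hecke comparison of $\leq_L$ for fixed $P$ then follows from \cref{schur algebra structure coefficients}, and everything assembles via $\iota$. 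What you gain over the paper's citation is transparency and the fact that the argument visibly works verbatim over $\zany$, which is exactly the setting the paper cares about.

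One point in your write-up deserves tightening, since as phrased it contains a small non sequitur. In the reverse direction of your second fact you write ``$\klu_w=\sum_h c_h h\klu_v$ from the based left ideal of $\klu_v$,'' but membership of $\klu_w$ in the based left ideal generated by $\klu_v$ does not mean $\klu_w$ is literally an $\hecany$-linear combination of $\klu_v$; it means $w\leq_L v$ in the chain sense. The correct justification that $P\subseteq R(w_i)$ is preserved along the chain is that, for $s$ with $vs<v$, the element $\klu_v$ lies in the $q$-eigenspace of right multiplication by $T_s$, which is a left $\hecany$-submodule spanned exactly by $\{\klu_x\mid xs<x\}$; hence every $\klu_x$ appearing in any $h\klu_v$ again has $xs<x$, and one inducts along the chain. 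Equivalently, this is the classical statement that $w\leq_L v$ implies $R(w)\supseteq R(v)$ (\cite[Proposition 2.4]{kazhdan1979representations}, which the paper already invokes inside the proof of \cref{normalised cell equivalence}). Citing that fact directly would close the gap and streamline your proof; the conclusion you draw from it is correct. Your phrasing of part (3)'s forward direction as ``each step of a $\sim_{LR}$-chain is $\sim_L$ or $\sim_R$'' is also slightly loose -- one should argue with $\leq_{LR}$-chains of $\leq_L$ and $\leq_R$ steps applied in both directions rather than with $\sim$-steps -- but the underlying reasoning is fine.
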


\begin{proof}
 This is \cite[Proposition 3.8]{mcgerty2003cells}.
\end{proof}

We now finally have everything we need to define cells.

\begin{definition}
        Fix $C$ some $\sim_L$-equivalence class (respectively $\sim_R$-equivalence class, respectively $\sim_{LR}$-equivalence class) in $\weyl$. A left cell (respectively right cell, respectively two-sided cell) in $\hecany$ is the $\zany$-linear span of $\klu_w$ for all $w\in C$.
        
        Similarly, fix $D$ some $\sim_L$-equivalence class (respectively $\sim_R$-equivalence class, respectively $\sim_{LR}$-equivalence class) of the $\klu_{QP}^w$. A left cell (respectively right cell, respectively two-sided cell) in $\schany$ is the $\zany$-linear span of $\klu_{QP}^w$ for all $\klu_{QP}^w\in D$.
        
        The left cells (respectively right cells, respectively two-sided cells) inherit a partial order from $\leq_L$ (respectively $\leq_R$, respectively $\leq_{LR}$).
\end{definition}

Observe that the anti-involution $\iota$ sends left cells to right cells, and conversely. We can see immediately from the definition that $\hecany$ and $\schany$ are both the direct sum of their two-sided cells. Furthermore, an exact indexing of the two-sided cells is known in terms of combinatorial data, which we now give.

\begin{definition}
        A partition of $n$ is a non-increasing sequence $\lambda$ of positive integers with sun $n$. We write $\lambda\vdash n$, and write $\Lambda$ for the set of all partitions of $n$.
        
        Given $\lambda,\mu\vdash n$, we say $\lambda\leq\mu$ if, for all $i$, we have $\sum_{i'=1}^i\lambda_{i'}\leq\sum_{i'=1}^i\mu_{i'}$. We call this order the dominance order.
\end{definition}

\begin{prop}\label{index for two sided cells}
There are isomorphisms of partially ordered sets between $\left(\Lambda,\geq\right)$, the set of two-sided cells of $\hecany$ ,and the set of two-sided cells of $\schany$, where both the latter two are ordered by $\leq_{LR}$ (note the reversal of the order). In particular, there are finitely many two-sided cells.
\end{prop}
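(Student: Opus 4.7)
The plan is to factor the assertion through two poset isomorphisms: one between $(\Lambda, \geq)$ and two-sided cells of $\hecany$, and one between two-sided cells of $\hecany$ and of $\schany$. The first is the classical Shi-Lusztig description of cells in affine type $A$; the second is essentially a direct consequence of \cref{hecke and shcur cell relations}.

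For the Hecke side, I would invoke Shi's explicit determination of two-sided cells in $\affw$ via partitions, as given in \cite[Theorem 17.4]{shi1986kazhdan} (already cited in \cref{cells on the affine subgroup}). This assigns to each $w \in \affw$ a shape $\lambda(w) \vdash n$ via an affine analogue of the Robinson-Schensted correspondence and identifies the fibres of $\lambda$ with the two-sided cells of $\affw$; by \cref{cells on the affine subgroup}, this descends to a bijection between two-sided cells of $\weyl$ and $\Lambda$. That $\leq_{LR}$ on cells translates to $\geq$ on partitions is Lusztig's result, which is cleanest via the $a$-function: $a$ is constant on each two-sided cell with value a strictly antitone function of $\lambda$ in the dominance order, and the general monotonicity $v \leq_{LR} w \Rightarrow a(v) \geq a(w)$ combined with this antitonicity forces the stated order reversal.

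For the Schur side, the bijection on cells is immediate from \cref{hecke and shcur cell relations}(3). For order preservation, the forward implication follows by writing $\klu_w$ inside the based two-sided ideal of $\klu_v$ in $\hecany$, pulling through the algebra inclusion of \cref{schur algebra basis}(2) to obtain $\klu_{\emptyset\emptyset}^w \leq_{LR} \klu_{\emptyset\emptyset}^v$ in $\schany$, and then spreading to arbitrary $(Q,P), (Q',P')$ via \cref{hecke and shcur cell relations}(1),(2). The reverse implication comes from expanding any relation witnessing $\klu_{QP}^w \leq_{LR} \klu_{Q'P'}^v$ using \cref{schur algebra structure coefficients}: the $\klu_{QP}^w$-coefficient on the right-hand side decomposes as a sum of products of Hecke structure constants of the form $\scu_{u_j,v}^{w'}\scu_{w',s_j}^{w}$, so its nonvanishing forces some $w'$ with $w \leq_{LR} w' \leq_{LR} v$ in $\hecany$.

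Finiteness is then immediate from $|\Lambda| < \infty$. The main obstacle is the Hecke-side order reversal, whose full justification runs through Lusztig's machinery for the $a$-function and the asymptotic algebra in affine type $A$; everything else reduces to \cref{hecke and shcur cell relations,schur algebra structure coefficients} and elementary bookkeeping.
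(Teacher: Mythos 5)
The paper's own proof is a single citation: for $\affhecany$ the full poset isomorphism is \cite[Section 2.9]{shi1996partial} (a paper whose specific subject is the partial order on two-sided cells), and the extended-affine and Schur cases follow from \cref{cells on the affine subgroup} and \cref{hecke and shcur cell relations}. Your route for the bijection and the Schur transfer matches the paper, but your sketch of the order reversal on the Hecke side has a real gap.

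The $a$-function argument you give does not establish the poset isomorphism. The general monotonicity $v \leq_{LR} w \Rightarrow a(v) \geq a(w)$ combined with strict monotonicity of $a_\lambda$ in the dominance order rules out only one configuration: if $\text{cell}(\lambda) \leq_{LR} \text{cell}(\mu)$ then you can conclude $\lnot(\lambda < \mu)$. It does not rule out $\lambda$ and $\mu$ being incomparable, so it does not even give the implication $\text{cell}(\lambda) \leq_{LR} \text{cell}(\mu) \Rightarrow \lambda \geq \mu$. Worse, the $a$-function gives absolutely nothing for the converse direction $\lambda \geq \mu \Rightarrow \text{cell}(\lambda) \leq_{LR} \text{cell}(\mu)$, which is the genuinely hard content and requires explicitly producing witnesses in the based ideal. (In finite type A one can run the Vogan--Kazhdan--Lusztig--Barbasch combinatorics or quasi-hereditary structure; in the affine case this is exactly what Shi proves in \cite{shi1996partial}.) You acknowledge at the end that the "full justification runs through Lusztig's machinery," but the concrete mechanism you describe --- monotonicity plus antitonicity --- is not that machinery and, as stated, "forces" much less than the proposition asserts. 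Cite \cite[Section 2.9]{shi1996partial} for the order (not just \cite[Theorem 17.4]{shi1986kazhdan} for the RSK bijection), or be explicit that the two directions of the order-isomorphism each require a separate argument beyond the $a$-function. The Schur-side order transfer you spell out is finer than what the paper records (the paper leans entirely on \cite[Proposition 3.8]{mcgerty2003cells}), and is a reasonable addition, though not where the difficulty lies.
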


\begin{proof}
 For $\affhecany$, this is \cite[Section 2.9]{shi1996partial}. Hence it follows for $\hecany$ by \cref{cells on the affine subgroup} and for $\schany$ by \cref{hecke and shcur cell relations}.
\end{proof}

\begin{definition}
        We write $\hecany^{\lambda}$ and $\schany^{\lambda}$ for the two-sided cells of $\hecany$ and $\schany$ respectively that correspond under the isomorphism \cref{index for two sided cells} to the partition $\lambda$.
        
        Similarly, we write $\hecany^{\geq\lambda}=\bigoplus_{\mu\geq\lambda}\hecany^{\mu}$ and $\schany^{\geq\lambda}=\bigoplus_{\mu\geq\lambda}\schany^{\mu}$, and furthermore we write $\hecany^{>\lambda}=\bigoplus_{\mu>\lambda}\hecany^{\mu}$ and $\schany^{>\lambda}=\bigoplus_{\mu>\lambda}\schany^{\mu}$.
\end{definition}

We will need some elementary properties of the cells. Firstly, we observe how they behave under the involution.

\begin{prop}\label{cells respect involution}
    $\iota\left(\hecany^{\lambda}\right)=\hecany^{\lambda}$ and $\iota\left(\schany^{\lambda}\right)=\schany^{\lambda}$. In particular, $\iota$ sends left cells in $\hecany^{\lambda}$ and $\schany^{\lambda}$ to right cells in $\hecany^{\lambda}$ and $\schany^{\lambda}$ respectively, and conversely.
\end{prop}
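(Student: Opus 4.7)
The plan is to deduce all three assertions from the single combinatorial input that inversion preserves each two-sided cell.

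First, since $\iota$ is an anti-involution and $\iota(\klu_w)=\klu_{w^{-1}}$, it maps the based two-sided ideal generated by $\klu_w$ onto the based two-sided ideal generated by $\klu_{w^{-1}}$, and similarly swaps based left and right ideals between the ones generated by $\klu_w$ and by $\klu_{w^{-1}}$. Reading off the preorders this gives $v\leq_{LR}w\iff v^{-1}\leq_{LR}w^{-1}$ and $v\leq_L w\iff v^{-1}\leq_R w^{-1}$ (the latter is already recorded after the definition of $\leq_L$ and $\leq_R$). Passing to equivalence classes, $\iota$ permutes the two-sided cells of $\hecany$, and within any two-sided cell it carries left cells to right cells and conversely.

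Second, to rule out $\iota$ permuting the two-sided cells of $\hecany$ nontrivially it suffices to show that $w\sim_{LR}w^{-1}$ for every $w\in\weyl$: then $\klu_{w^{-1}}=\iota(\klu_w)$ lies in the same $\hecany^\lambda$ as $\klu_w$, forcing $\iota(\hecany^\lambda)=\hecany^\lambda$. This inversion-invariance is inherent in the classification underlying \cref{index for two sided cells}: Shi's generalised Robinson--Schensted correspondence in \cite[Theorem 17.4]{shi1986kazhdan} attaches to each $w'\in\affw$ a pair of tableaux of common shape $\lambda$, with the shape determining the two-sided cell, and inversion merely swaps the two tableaux while preserving their shape. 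The extension from $\affw$ to $\weyl$ is then immediate from \cref{cells on the affine subgroup}.

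Third, the Schur algebra claim is now automatic: by \cref{cellular involution} the involution sends $\klu_{QP}^w$ to $\klu_{PQ}^{w^{-1}}$, and by \cref{hecke and shcur cell relations} the two-sided cell label of $\klu_{QP}^w$ depends only on the two-sided cell of $w$ in $\hecany$, which we have just shown is $\iota$-invariant. Finally, the statement about left and right cells within $\hecany^\lambda$ and $\schany^\lambda$ follows from the $\leq_L\leftrightarrow\leq_R$ interchange established in the first step, together with the fact that each image now lies in the correct two-sided cell.

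The only nonformal step is verifying $w\sim_{LR}w^{-1}$, which is where the type-$A$ input from Shi enters; everything else is an immediate consequence of $\iota$ being an anti-involution that preserves the Kazhdan--Lusztig basis.
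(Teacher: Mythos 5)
Your proof is correct, but it takes a genuinely different route from the paper. The paper disposes of this proposition with a single citation to \cite[Theorem 1.10]{lusztig1987cells}, which delivers the inversion-stability of two-sided cells (together with the interchange of left and right) directly from Lusztig's general theory of cells in affine Weyl groups, without invoking any type-$A$ combinatorics. You instead observe that everything reduces to the single fact $w\sim_{LR}w^{-1}$, and you extract that from Shi's generalised Robinson--Schensted correspondence: inversion swaps the tableau pair, so it preserves the shape and hence the two-sided cell. That is a valid and more elementary (but type-$A$-specific) argument, and it fits nicely with the rest of the paper, which already leans on Shi's \cite[Theorem 17.4]{shi1986kazhdan} for related facts. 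One small caution: Theorem 17.4 in Shi is the characterisation of cells by admissible sign types/shapes, and the precise statement that inversion interchanges the two members of the RSK pair is a separate (if standard) property of the correspondence; you flag this yourself as the ``only nonformal step,'' and a more pinpointed reference within Shi would tighten it. Your formal preamble about $\iota$ swapping based left and right ideals and preserving based two-sided ideals, and the reduction of the Schur statement to the Hecke one via \cref{cellular involution} and \cref{hecke and shcur cell relations}, are both exactly right and closely mirror the paper's conventions. What the paper's citation buys is brevity and generality (no appeal to tableau combinatorics); what your argument buys is transparency about precisely which combinatorial input is being used and where type $A$ enters.
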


\begin{proof}
 This is \cite[Theorem 1.10]{lusztig1987cells}.
\end{proof}

Next, we observe that the various unions of cells are, by construction, ideals in suitable quotients.

\begin{lemma}\label{cells are ideals}
    $\hecany^{\geq\lambda}$ and $\hecany^{>\lambda}$ are two-sided ideals of $\hecany$, on which $\iota$ restricts to a well-defined anti-involution. Similarly, $\schany^{\geq\lambda}$ and $\schany^{>\lambda}$) are two-sided ideals of $\schany$ on which $\iota$ restricts to a well-defined anti-involution. Thus, $\hecany^{\lambda}=\hecany^{\geq\lambda}/\hecany^{>\lambda}$ and $\schany^{\lambda}=\schany^{\geq\lambda}/\schany^{>\lambda}$ inherit algebra structures as subquotients of $\hecany$ and $\schany$ respectively, on which $\iota$ again restricts to a well-defined anti-involution.
\end{lemma}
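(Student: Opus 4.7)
The plan is to reduce everything to two earlier facts: \cref{index for two sided cells}, which gives an order-reversing bijection between $(\Lambda,\geq)$ and the two-sided cells under $\leq_{LR}$, and \cref{cells respect involution}, which says $\iota$ preserves each $\hecany^\mu$ and each $\schany^\mu$.

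For the ideal claims, the order-reversal in \cref{index for two sided cells} means that $\hecany^{\geq\lambda}=\bigoplus_{\mu\geq\lambda}\hecany^\mu$ is the $\zany$-span of $\klu_w$ for $w$ such that $w\leq_{LR}w_\lambda$, where $w_\lambda$ is any chosen element of the cell for $\lambda$. This is a down-set of the $\klu$-basis under $\leq_{LR}$. By the definition of $\leq_{LR}$, the span of any down-set of the $\klu$-basis is automatically a two-sided ideal: for such a $\klu_w$ in the span and any $h_1,h_2\in\hecany$, the product $h_1\klu_w h_2$ lies in the based two-sided ideal generated by $\klu_w$, which is the span of $\{\klu_v:v\leq_{LR}w\}$, and by transitivity of $\leq_{LR}$ this is contained in the span of $\{\klu_v:v\leq_{LR}w_\lambda\}=\hecany^{\geq\lambda}$. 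The identical argument handles $\hecany^{>\lambda}$ and, with $\klu_{QP}^w$ in place of $\klu_w$ and the $\schany$ half of \cref{index for two sided cells}, also handles $\schany^{\geq\lambda}$ and $\schany^{>\lambda}$.

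For the involution, summing $\iota(\hecany^\mu)=\hecany^\mu$ from \cref{cells respect involution} over the relevant $\mu$ gives $\iota(\hecany^{\geq\lambda})=\hecany^{\geq\lambda}$ and $\iota(\hecany^{>\lambda})=\hecany^{>\lambda}$, and likewise on the Schur side. Combined with \cref{cellular involution}, the restriction of $\iota$ to each of these ideals is an anti-involution. The subquotient algebra structure on $\hecany^\lambda$ and $\schany^\lambda$ is then formal from the ideal containment, and $\iota$ descends to each quotient as an anti-involution precisely because both the numerator and denominator are $\iota$-stable.

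The whole argument is essentially bookkeeping around the two facts cited above; the only point requiring care is the order-reversal in \cref{index for two sided cells}, which converts the ``upward'' set $\{\mu\geq\lambda\}$ of partitions into a down-set of cells under $\leq_{LR}$ and hence into an ideal rather than a quotient. All the genuine content has been absorbed into those two earlier propositions, so the present lemma is almost entirely formal.
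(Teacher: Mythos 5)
Your proposal is correct and takes essentially the same approach as the paper, whose proof simply records that the ideal claims are ``immediate from the definitions'' and invokes \cref{cells respect involution} for $\iota$-stability; you have merely unpacked the ``immediate'' part explicitly via the order-reversal of \cref{index for two sided cells} and the transitivity of $\leq_{LR}$.
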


\begin{proof}
 That the various submodules are ideals is immediate from their definitions. That the involution restricts to these ideals follows from \cref{cells respect involution}.
\end{proof}

\begin{lemma}\label{left cells are left ideals}
    Any left (respectively right) cell in $\hecany^{\lambda}$ is a left (respectively right) ideal in $\hecany/\hecany^{>\lambda}$. Similarly, any left (respectively right) cell in $\schany^{\lambda}$ is a left (respectively right) ideal in $\schany/\schany^{>\lambda}$.
\end{lemma}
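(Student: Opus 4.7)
The plan is to reduce the Schur algebra statement to the Hecke algebra one via \cref{hecke and shcur cell relations} and \cref{schur algebra structure coefficients}, and to handle the Hecke algebra case by directly unpacking the definitions. For the Hecke case, fix a $\sim_L$-equivalence class $C$ contained in the $\lambda$-cell, and let $h \in \hecany$ and $w \in C$. I need to show that $h\klu_w$ lies in $\bigoplus_{v \in C} \zany \klu_v + \hecany^{>\lambda}$. By the definition of $\leq_L$ via based left ideals, $h\klu_w$ expands as a $\zany$-linear combination of $\klu_v$ with $v \leq_L w$, and each such $v$ satisfies $v \leq_{LR} w$. Via the order-reversing bijection of \cref{index for two sided cells}, the two-sided cell of $v$ corresponds to a partition $\mu \geq \lambda$ in dominance: the terms with $\mu > \lambda$ strictly lie in $\hecany^{>\lambda}$, while the remaining terms have $v \sim_{LR} w$.

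The main obstacle will be ruling out the residual case $v \leq_L w$, $v \sim_{LR} w$, $v \not\sim_L w$, which would produce a basis element sitting neither in the cell nor in $\hecany^{>\lambda}$. I would dispose of this by invoking the classical fact that distinct left cells contained in a common two-sided cell are $\leq_L$-incomparable. In the extended affine type $A$ setting of this paper it is contained in Shi's combinatorial description of cells already invoked in the proof of \cref{cells on the affine subgroup}; alternatively, it is the standard consequence of the constancy of Lusztig's $a$-function on two-sided cells. With this in hand, $v \leq_L w$ together with $v \sim_{LR} w$ forces $v \sim_L w$, so $v \in C$, completing the Hecke algebra argument.

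For the Schur algebra, \cref{hecke and shcur cell relations} realises a left cell of $\schany^\lambda$ as $\bigoplus_{Q \subseteq \fingen,\, v \in C} \zany \klu_{QP}^v$ for a fixed $P$ and a fixed left cell $C$ of $\hecany^\lambda$. A basis element $\klu_{RS}^u$ of $\schany$ annihilates $\klu_{QP}^w$ unless $S = Q$, in which case \cref{schur algebra structure coefficients} expands the product as $p_Q^{-1} \sum_{w'} \scu_{u,w}^{w'} \klu_{RP}^{w'}$ with each contributing $w'$ satisfying $w' \leq_L w$ in $\hecany$. The Hecke analysis above then places each $\klu_{RP}^{w'}$ either in the Schur left cell (when $w' \in C$) or in $\schany^{>\lambda}$ (when $w'$ lies in a strictly higher two-sided cell in dominance). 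The right ideal statements in both algebras follow by applying the anti-involution $\iota$ and invoking the clause of \cref{cells respect involution} that $\iota$ swaps left and right cells within each two-sided cell.
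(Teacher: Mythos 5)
Your proof is correct, and it actually does more work than the paper, which disposes of this lemma with the single sentence ``This is again immediate from the definitions.'' Your unwinding of the definitions is sound: $h\klu_w$ lands in the span of $\klu_v$ with $v\leq_L w$, and the contribution from $v\not\sim_{LR}w$ falls into $\hecany^{>\lambda}$ by \cref{index for two sided cells}. The one point you rightly flag as \emph{not} automatic --- ruling out $v\leq_L w$, $v\sim_{LR}w$, $v\not\sim_L w$ --- is genuine: in the paper's conventions $\leq_L$ is defined via based ideals, so $v<_L w$ does not by definition force $v<_{LR}w$. That this situation cannot occur is \cite[Corollary 1.9(b)]{lusztig1987cells} together with the constancy of the $a$-function on two-sided cells (cf.\ \cref{a is finite}(3)); the paper invokes precisely these clauses of Lusztig's Corollary 1.9 later, in the proofs of \cref{normalised cell equivalence} and \cref{normalised actions commute}, so the ingredient is available but is being silently taken for granted here. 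Your reduction of the Schur statement to the Hecke one through \cref{hecke and shcur cell relations} and \cref{schur algebra structure coefficients}, and your handling of the right-ideal claims via $\iota$ and \cref{cells respect involution}, match the intended argument. In short: same approach as the paper, but you have correctly identified and filled a step that the paper's ``immediate'' elides.
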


\begin{proof}
 This is again immediate from the definitions.
\end{proof}

Finally, we recall that the number of left (and hence right) cells in each two-sided cell is finite, and furthermore that the exact number is known.

\begin{prop}\label{number of left cells}
    The number of left (equivalently, right) cells in $\hecany^{\lambda}$ is $n_{\lambda}=\frac{n!}{\mu_1!\dots\mu_{r'}!}$ where $(\mu_1,\dots,\mu_{r'})$ is the dual partition of $\lambda$, that is, $\mu_i=\left|\left\{j\middle| \lambda_j\geq i\right\}\right|$. The number of left (equivalently right) cells in $\schany^{\lambda}$ is $m_{\lambda}=\prod_{i=1}^{n-1}\left(\frac{n!}{(n-i)!i!}\right)^{n_i}$, where $n_i=\lambda_i-\lambda_{i+1}$.
\end{prop}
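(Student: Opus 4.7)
The plan is to transfer known enumerations from the literature, with the bulk of the work consisting of checking that both counts descend correctly from $\affw$ to $\weyl$ and the corresponding Schur algebras.

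First I would reduce the Hecke algebra count to a statement about $\affw$. By \cref{cells on the affine subgroup}, each left cell $\Gamma$ in $\affw$ (with respect to the preorder $\leq_L'$ defined in $\affhecany$) is precisely the intersection $\Gamma=\Gamma^{\weyl}\cap\affw$ of some left cell $\Gamma^{\weyl}$ in $\weyl$, and conversely $\Gamma^{\weyl}=\bigcup_{i\in\z}\omega^{i}\Gamma$ since $\omega^iv'\sim_Lv'$ for all $v'\in\affw$. This gives a bijection between left cells in $\affw$ and left cells in $\weyl$ that preserves two-sided cells. The count for the two-sided cell of $\affw$ corresponding to $\lambda$ is then Shi's enumeration: by the explicit parameterisation of left cells in \cite[Chapter 17]{shi1986kazhdan} (see also \cite{shi1996partial}) in terms of row-standard Young tableaux of shape $\lambda^{t}=\mu$, their number is the multinomial coefficient $n!/\mu_1!\cdots\mu_{r'}!$.

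Second, for the Schur algebra count I would invoke \cref{hecke and shcur cell relations} to reparameterise left cells of $\schany^{\lambda}$ as pairs $(\Gamma,P)$ with $\Gamma$ a left cell in $\hecany^{\lambda}$ and $P\subseteq\fingen$ admissible, meaning that some $w\in\Gamma$ occurs as a maximal length double coset representative $w\in\prescript{Q}{}\weyl^{P}$ for some $Q$. The enumeration of these pairs is carried out by McGerty \cite[Proposition 3.10 or equivalent]{mcgerty2003cells}, who shows that the count factors across the corners of the Young diagram of $\lambda$ — each corner at row $i$ contributes a factor of $\binom{n}{i}$ — yielding the product formula $m_{\lambda}=\prod_{i=1}^{n-1}\binom{n}{i}^{n_i}$ with $n_i=\lambda_i-\lambda_{i+1}$.

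The main obstacle is bookkeeping rather than mathematics: the preorders $\leq_L,\leq_R,\leq_{LR}$ are independent of the choice of coefficient ring in $\zany$ and are identical whether defined using $\klu$ or $\kln$, so all three cases of $\zany$ are covered simultaneously; what must be carefully checked is that the admissibility condition on $(\Gamma,P)$ agrees between Shi's parameterisation of $\hecany$-cells and McGerty's parameterisation of $\schany$-cells under our conventions. Once this translation is established, both statements follow directly from the cited results.
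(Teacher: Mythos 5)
Your proposal takes essentially the same route as the paper: reduce the Hecke algebra count to $\affw$ via \cref{cells on the affine subgroup}, cite Shi's enumeration of left cells in each two-sided cell of $\affw$ (the paper uses \cite[Theorem 14.4.5]{shi1986kazhdan}), and cite McGerty for the Schur algebra count (the paper uses \cite[Proposition 4.10]{mcgerty2003cells}). Your extra care about which pairs $(\Gamma,P)$ actually index nonempty left cells of $\schany^{\lambda}$ is a genuine subtlety — as stated, \cref{hecke and shcur cell relations} would naively give $n_\lambda\cdot 2^{n-1}$ left cells, which is wrong — but since both your argument and the paper's ultimately delegate the resolution of this to McGerty's proposition, the proofs are the same in substance.
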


\begin{proof}
By \cite[Theorem 14.4.5]{shi1986kazhdan}, ${\affhecany}^{\lambda}$ has exactly $n_{\lambda}$ left cells. Hence by \cref{cells on the affine subgroup} so does $\hecany^{\lambda}$. That $\schany^{\lambda}$ has $m_{\lambda}$ left cells is \cite[Proposition 4.10]{mcgerty2003cells}.
\end{proof}

\section{Integral isomorphisms between left cells}

In this section we give an isomorphism between each pair of left cells in the same two-sided cell. We construct this isomorphism using the Kazhdan-Lusztig star operation on $\weyl$. We first recall its definition.

\begin{definition}
        For $w\in\weyl$, write $L(w)=\left\{s\in\affgen\middle|sw<w\right\}$ and $R(w)=\left\{s\in\affgen\middle|ws<w\right\}$.
        
        Suppose $n\geq 3$. For $s_i\in\affgen$, let $D_{L}(s_i)=\left\{w\in\weyl\middle|\ \left|L(w)\cap\left\{s_i,s_{i+1}\right\}\right|=1\right\}$ and $D_{R}(s_i)=\left\{w\in\weyl\middle|\ \left|R(w)\cap\left\{s_i,s_{i+1}\right\}\right|=1\right\}$.
\end{definition}

\begin{prop}
    If $w\in D_L(s_i)$, then $\left|\left\{s_iw, s_{i+1}w\right\}\cap D_{L}(s_i)\right|=1$. Similarly, if $w\in D_R(s_i)$, then $\left|\left\{ws_i, ws_{i+1}\right\}\cap D_{R}(s_i)\right|=1$.
\end{prop}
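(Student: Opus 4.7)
The plan is to reduce to a finite case check inside the dihedral parabolic subgroup $H = \langle s_i, s_{i+1}\rangle$ of $\weyl$. Since $n\geq 3$, the braid relation from \cref{weyl group affine decomposition} forces $(s_is_{i+1})^3 = 1$, so $H$ is isomorphic to the symmetric group on three letters and has six elements.

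First I would decompose $w = w'\omega^j$ with $w'\in\affw$. Since $\omega^j$ does not affect length or the left descent set, I may work with $w'$ inside $\affw$. Standard parabolic coset theory for Coxeter groups then gives a unique factorisation $w' = uv$ with $u\in H$, $v$ the minimal length representative of the coset $Hw'$, and $l(u'v) = l(u') + l(v)$ for every $u'\in H$. Under this factorisation $L(w) \cap \{s_i, s_{i+1}\} = L(u) \cap \{s_i, s_{i+1}\}$, so the hypothesis $w\in D_L(s_i)$ is equivalent to $u$ lying in the four-element subset $\mathcal{E} = \{s_i, s_{i+1}, s_is_{i+1}, s_{i+1}s_i\}\subseteq H$. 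Multiplying on the left by $s_i$ or $s_{i+1}$ leaves $v$ and $\omega^j$ untouched and replaces $u$ by $s_iu$ or $s_{i+1}u$ respectively, so the claim reduces to showing that for each $u\in \mathcal{E}$, exactly one of $s_iu, s_{i+1}u$ lies in $\mathcal{E}$.

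This is a four-line check in $S_3$: for $u = s_i$ the pair is $\{e, s_{i+1}s_i\}$; for $u = s_{i+1}$, $\{s_is_{i+1}, e\}$; for $u = s_is_{i+1}$, $\{s_{i+1}, s_{i+1}s_is_{i+1}\}$; and for $u = s_{i+1}s_i$, $\{s_is_{i+1}s_i, s_i\}$. In each case exactly one member lies in $\mathcal{E}$, since the two elements of $H\setminus\mathcal{E}$ are $e$ and the longest element $s_is_{i+1}s_i = s_{i+1}s_is_{i+1}$. The right-hand claim follows by the formally identical argument applied to the cosets $w'H$, or equivalently by applying the antipode $w\mapsto w^{-1}$ which swaps $L$ with $R$ and left with right multiplication. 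I foresee no real obstacle; the only point requiring attention is the parabolic decomposition and its length additivity, both of which are routine for finite parabolic subgroups of Coxeter groups.
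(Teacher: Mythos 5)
Your proof is correct. The paper itself does not give an argument for this proposition, it simply cites Shi's monograph, so any explicit proof is by necessity "different." Your argument is the standard one: reduce to the rank-two parabolic $H = \langle s_i, s_{i+1}\rangle \cong S_3$ (which is indeed dihedral of order six when $n \geq 3$, since $s_i$ and $s_{i+1}$ then satisfy the braid relation from \cref{weyl group affine decomposition}), using the length-additive parabolic factorisation $w' = uv$ with $v$ minimal in $Hw'$ to transfer the left-descent condition from $w'$ to its $H$-component $u$, and then check the four cases by hand. The passage through $w = w'\omega^j$ is correct because left multiplication by $s_i$ commutes past $\omega^j$ only in the sense you need: the left descent set of $w'\omega^j$ equals that of $w'$, so the decomposition is untouched. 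The deduction of the right-handed statement from the left-handed one via $w\mapsto w^{-1}$, which swaps $L$ with $R$, is also clean. This is essentially the content of the classical Kazhdan--Lusztig/Shi treatment of $*$-operations on dihedral strings; where the paper outsources this to a reference, you have filled in a complete and elementary argument.
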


\begin{proof}

This is \cite[Section 1.6]{shi1986kazhdan}.
 
 
 
\end{proof}

\begin{definition}
        For $w\in D_L(s_i)$, let $\prescript{*}{}w\in \left\{s_iw, s_{i+1}w\right\}\cap D_{L}(s_i)$. The map $D_L(s_i)\rightarrow D_L(s_i)$ given by $w\mapsto \prescript{*}{}w$ is the left star operation on  $D_L(s_i)$.
        
        Similarly, for $w\in D_R(s_i)$, let $w^*\in \left\{ws_i, ws_{i+1}\right\}\cap D_{R}(s_i)$. The map $D_R(s_i)\rightarrow D_R(s_i)$ given by $w\mapsto w^*$ is the right star operation on  $D_R(s_i)$.
\end{definition}

The key ingredient that ensures the isomorphism we construct can be defined over $\zpmq$ is that the star operation changes the parity of the length. Indeed, we can be more precise.

\begin{prop}\label{stars are integral}
    $q^{\frac{{l(w)-l(w^*)+1}}{2}}$ and $q^{\frac{{l(w)-l(\prescript{*}{}w)+1}}{2}}$ are either $1$ or $q$. In particular, they are in $\zq$.
\end{prop}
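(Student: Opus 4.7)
The plan is to reduce the claim to the elementary fact that in $\weyl$, multiplication (on either side) by a simple reflection $s\in\affgen$ changes the length by exactly $\pm 1$. Once this is granted, the result is immediate: by construction $w^{*}\in\{ws_{i},ws_{i+1}\}$, so $l(w)-l(w^{*})\in\{-1,+1\}$, and hence $\tfrac{l(w)-l(w^{*})+1}{2}\in\{0,1\}$, making $q^{\frac{l(w)-l(w^{*})+1}{2}}\in\{1,q\}\subseteq\zq$. The argument for $\prescript{*}{}w$ is identical, using that $\prescript{*}{}w\in\{s_{i}w,s_{i+1}w\}$.

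To verify the length-change lemma for $\weyl$, I would reduce to the affine Weyl group via the semidirect decomposition of \cref{weyl group affine decomposition} (4). Writing $w=w'\omega^{j}$ with $w'\in\affw$, we have $l(w)=l(w')$ by definition of the length. Using \cref{weyl group affine decomposition} (3) iteratively, we may rewrite $\omega^{j}s=s'\omega^{j}$ for some $s'\in\affgen$ (namely $s'=\omega^{j}s\omega^{-j}$, which is another simple reflection because conjugation by $\omega$ permutes the index set cyclically). Hence $ws=w's'\omega^{j}$, which has length $l(w's')$. But $\affw$ is a Coxeter group with simple reflections $\affgen$, so the standard Coxeter-theoretic length axiom yields $l(w's')=l(w')\pm 1$. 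The left-multiplication case is symmetric.

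I expect no serious obstacle here: the content is essentially bookkeeping in the semidirect product together with the standard length axiom for Coxeter groups. The star operation is only defined for $n\geq 3$, so no small-$n$ edge case arises in the analysis.
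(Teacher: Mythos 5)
Your proof is correct and takes essentially the same route as the paper: both reduce to showing $l(w^*)=l(w)\pm1$ and $l(\prescript{*}{}w)=l(w)\pm1$, handle the extended group by conjugating the simple reflection through $\omega^j$ via \cref{weyl group affine decomposition} (3), and then invoke the standard Coxeter length fact on $\affw$ (the paper cites \cite[Lemma 1.2]{lusztig2003hecke} where you spell out the axiom). The only difference is that you inline the Coxeter-theoretic bookkeeping rather than citing it.
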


\begin{proof}
This is equivalent to saying $l(w^*)=l(w)\pm1$ and $l(\prescript{*}{}w)=l(w)\pm1$, which both follow for $w\in\affw$ by \cite[Lemma 1.2]{lusztig2003hecke}. The claim for general $w$ then follows from \cref{weyl group affine decomposition} (3).
\end{proof}

The key property of the star operation is that it respects the cell structure. This is what enables us to use it to build isomorphisms between cells.

\begin{prop}\label{cells respect star operation}
Let $v,w\in D_L(s)$, and let $x,y\in D_R(s)$.
\begin{enumerate}
    \item $v\sim_Rw$ if and only if $\prescript{*}{}v\sim_R\prescript{*}{}w$.
    \item $x\sim_Ly$ if and only if $x^*\sim_Ly^*$.
    \item $w\sim_L \prescript{*}{}w$.
    \item $x\sim_R x^*$.
\end{enumerate}
\end{prop}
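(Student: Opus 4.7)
The plan is to prove (3) directly from multiplication formulas for the Kazhdan--Lusztig basis, obtain (4) by applying the anti-involution $\iota$ of \cref{cellular involution} (which swaps left and right star operations via $(\prescript{*}{}w)^{-1} = (w^{-1})^*$, since $L(w^{-1}) = R(w)$), then prove (1) using (3) plus commutativity of left and right multiplication, and finally obtain (2) via $\iota$. The main tool is the multiplication formula
\begin{align*}
    \klu_s\klu_w = \begin{cases} \klu_{sw} + \sum_{z<w,\,sz<z}\mu(z,w)\,q^{\frac{l(w)-l(z)+1}{2}}\klu_z & \text{if } sw>w, \\ (q+1)\klu_w & \text{if } sw<w, \end{cases}
\end{align*}
where $\mu(z,w) \in \z$ is the coefficient of $q^{(l(w)-l(z)-1)/2}$ in $P_{z,w}$. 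This follows from $T_s \klu_w = q\klu_w$ when $sw<w$ (as in the proof of \cref{kl basis and idempotents}) combined with $\klu_s = T_s + 1$; an analogous formula holds for right multiplication.

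For (3), $w \in D_L(s_i)$ means exactly one of $s_iw, s_{i+1}w$ is less than $w$, and $\prescript{*}{}w$ is by definition the unique member of $\{s_iw, s_{i+1}w\}$ that again lies in $D_L(s_i)$. Since the star operation is an involution, I may assume $l(\prescript{*}{}w) > l(w)$. Picking the appropriate $s \in \{s_i, s_{i+1}\}$ with $sw = \prescript{*}{}w$ (so $sw > w$) and applying the multiplication formula immediately gives $\prescript{*}{}w \leq_L w$; swapping the roles of $w$ and $\prescript{*}{}w$ and applying the formula at $\prescript{*}{}w$ gives $w \leq_L \prescript{*}{}w$, hence $w \sim_L \prescript{*}{}w$. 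For (1), given $v, w \in D_L(s_i)$ with $v \leq_R w$, I would choose $h \in \hecany$ such that $\klu_v$ appears with nonzero coefficient in $\klu_w h$. Since left multiplication by $\klu_s$ commutes with right multiplication, $\klu_s(\klu_w h) = (\klu_s\klu_w)h$. Reducing modulo basis elements strictly below $w$ under $\leq_L$ (where by \cref{left cells are left ideals} the left cell becomes a clean right $\hecany$-module), the leading behaviour of left multiplication by $\klu_s$ identifies $\klu_w$ with $\klu_{\prescript{*}{}w}$ up to a unit, forcing $\klu_{\prescript{*}{}v}$ to appear in some $\klu_{\prescript{*}{}w} h'$. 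This gives $\prescript{*}{}v \leq_R \prescript{*}{}w$, and the converse follows since the star operation is an involution.

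The main obstacle is controlling the lower order terms $\sum_z \mu(z,w) q^{\frac{l(w)-l(z)+1}{2}}\klu_z$ in the multiplication formula: the $\klu_z$ for $z < w$ in Bruhat order always satisfy $z \leq_L w$ but not necessarily $z <_L w$, so they could a priori contaminate the identification of left multiplication by $\klu_s$ with the star operation even after passing to a cell quotient. The clean resolution is to work cellwise via $W$-graph combinatorics, as in \cite[Corollary 6.3]{lusztig2003hecke} and \cite[Theorem 4.2]{kazhdan1979representations}; \cref{stars are integral} ensures the translation to the integral setting $\zq$ requires no square root of $q$.
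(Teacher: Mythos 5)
Your reduction strategy---prove (3) directly, deduce (4) via $\iota$ since $(\prescript{*}{}w)^{-1} = (w^{-1})^*$, then prove (1) and deduce (2) via $\iota$---is sound, and the multiplication formula you state is correct. However, the paper's own proof is simply a citation to Shi, so your proposal is going well beyond what the paper does; the question is whether your sketch is actually a proof, and there I see a genuine gap.

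For (3), the step ``swapping the roles of $w$ and $\prescript{*}{}w$ gives $w \leq_L \prescript{*}{}w$'' does not follow as stated. Having chosen $s$ with $sw = \prescript{*}{}w > w$, the ``swapped'' application of the formula would multiply $\klu_s$ onto $\klu_{\prescript{*}{}w}$, but now $s\prescript{*}{}w = w < \prescript{*}{}w$, so you land in the second case $(q+1)\klu_{\prescript{*}{}w}$, which yields nothing. The WLOG $l(\prescript{*}{}w) > l(w)$ is not symmetric under interchange. The correct argument for the reverse inequality uses the \emph{other} generator $t \in \{s_i,s_{i+1}\}\setminus\{s\}$: one checks $t\prescript{*}{}w > \prescript{*}{}w$ (since $\prescript{*}{}w \in D_L(s_i)$ and $s \in L(\prescript{*}{}w)$), applies the formula to $\klu_t\klu_{\prescript{*}{}w}$, and observes that $w$ appears in the $\mu$-sum because $\mu(w,\prescript{*}{}w) = P_{w,\prescript{*}{}w} = 1$ (as $l(\prescript{*}{}w) - l(w) = 1$). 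So the ``lower-order $\mu$-terms'' you flag at the end as the main obstacle are in fact the mechanism that makes (3) work; treating them as contamination to be controlled misses this. For (1), the passage from ``$\klu_s$ identifies $\klu_w$ with $\klu_{\prescript{*}{}w}$ up to a unit'' to ``forcing $\klu_{\prescript{*}{}v}$ to appear in some $\klu_{\prescript{*}{}w}h'$'' is too vague to count as a proof; this is precisely where the $W$-graph formalism of \cite[Theorem 4.2]{kazhdan1979representations} does real work, and you are right to defer to it. In short: the references you cite at the end are the right ones and would close the gaps, but the intermediate sketch of (3) as written is incorrect, and the sketch of (1) is not yet an argument.
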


\begin{proof}
 This is \cite[Theorems 1.6.2-3]{shi1986kazhdan}.
\end{proof}

\begin{prop}\label{stars commute}
Let $w\in D_L(s)\cap D_R(t)$. Then $\prescript{*}{}w\in D_R(t)$, $w^*\in D_L(s)$, and $(\prescript{*}{}w)^*=\prescript{*}{}(w^*)$.
\end{prop}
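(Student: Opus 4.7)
The plan is to reduce to $w\in\affw$ via conjugation by $\omega$, and then exploit the classical fact that right descent sets are constant on left cells. Writing $w=w'\omega^k$ with $w'\in\affw$, \cref{weyl group affine decomposition}(3) gives $\omega^k s_\ell \omega^{-k}=s_{\ell+k}$ and preservation of length; hence $L(w)=L(w')$ and $s_\ell\in R(w)$ iff $s_{\ell+k}\in R(w')$, so $w\in D_L(s_i)\cap D_R(s_j)$ iff $w'\in D_L(s_i)\cap D_R(s_{j+k})$. A short computation also gives $\prescript{*}{}w=(\prescript{*}{}w')\omega^k$ and $w^*=((w')^*)\omega^k$, where $(w')^*$ is taken with respect to $s_{j+k}$, reducing the problem to the case $w\in\affw$.

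The key technical input is: if $v\sim_L w$ then $R(v)=R(w)$. I would derive this from \cite[Theorem 6.6]{lusztig2003hecke}, which characterises $s\in R(w)$ by $\kln_w T_s=q\kln_w$: if $s\in R(w)$ then $T_s-q$ annihilates $\kln_w$ from the right, hence annihilates every element of the based left ideal generated by $\kln_w$, forcing $R(v)\supseteq R(w)$ for all $v\leq_L w$, and so equality on $\sim_L$-cells. The dual $L(v)=L(w)$ on $\sim_R$-cells follows by applying the anti-involution $\iota$ of \cref{cellular involution}.

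Using this, the first two assertions are immediate: by \cref{cells respect star operation}(3), $\prescript{*}{}w\sim_L w$, so $R(\prescript{*}{}w)=R(w)$ and $\prescript{*}{}w\in D_R(t)$; symmetrically $w^*\in D_L(s)$. For the commutation, set $u=(\prescript{*}{}w)^*$ and $u'=\prescript{*}(w^*)$, both now well-defined. By \cref{cells respect star operation}(4), $u\sim_R\prescript{*}{}w$ gives $L(u)=L(\prescript{*}{}w)$ and hence $u\in D_L(s)$; dually $u'\in D_R(t)$. Writing $\prescript{*}{}w=rw$ and $w^*=wr'$ with uniquely determined $r\in\{s_i,s_{i+1}\}$ and $r'\in\{s_j,s_{j+1}\}$, the element $u$ must be of the form $rwr''$ for some $r''\in\{s_j,s_{j+1}\}$; the constraint $u\in D_R(t)$ together with $R(\prescript{*}{}w)=R(w)$ and the uniqueness characterising the right star force $r''=r'$, giving $u=rwr'=u'$.

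The main obstacle will be establishing the cited constancy of right descents on left cells; once this is in hand, the rest of the argument is a clean combination of the earlier propositions with the uniqueness characterising the star operation.
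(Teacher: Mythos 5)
The paper's proof is a citation to \cite[Lemma 1.4.3]{xi2002based}, whereas you attempt a direct argument. The reduction to $\affw$ via $\omega$-conjugation is fine, and the two existence statements ($\prescript{*}{}w\in D_R(t)$, $w^*\in D_L(s)$) are correctly deduced from the constancy of descent sets on cells. On that point: your sketch that $T_s-q$ annihilating $\kln_w$ forces $R(v)\supseteq R(w)$ for all $v$ in the \emph{based} left ideal is not immediate (the based ideal is larger than $\hecany\kln_w$, so an additional triangularity argument is needed), but the fact itself is classical — it is \cite[Proposition 2.4]{kazhdan1979representations}, which the paper already cites in the proof of \cref{normalised cell equivalence}, so this is a presentational issue rather than a gap.

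The genuine gap is in the commutation step. You claim that from $R(\prescript{*}{}w)=R(w)$ and the uniqueness of the right star, it follows that $(\prescript{*}{}w)^*=(\prescript{*}{}w)r''$ uses the same $r''\in\{s_j,s_{j+1}\}$ as $w^*=wr'$, and symmetrically the same $r$ on the left. This is false. Equality of descent sets $R(\prescript{*}{}w)=R(w)$ controls only the first-order descents; the choice between $s_j$ and $s_{j+1}$ in the right star depends on the descents of $(\prescript{*}{}w)s_j$ and $(\prescript{*}{}w)s_{j+1}$ versus those of $ws_j$ and $ws_{j+1}$, which need not coincide. Concretely, in the parabolic $\langle s_1,s_2\rangle\cong S_3$ inside $\weyl$ (for $n\geq 3$), take $w=s_2s_1\in D_L(s_1)\cap D_R(s_1)$. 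Then $\prescript{*}{}w = s_2w = s_1$, $w^* = ws_1 = s_2$, and one computes $(\prescript{*}{}w)^* = (s_1)^* = s_1s_2 = (\prescript{*}{}w)s_2$ while $w^*=ws_1$, so $r''=s_2\neq s_1=r'$; similarly $\prescript{*}{}(w^*)=\prescript{*}{}(s_2)=s_1(w^*)$, so $r'''=s_1\neq s_2=r$. Both sides still equal $s_1s_2$, but only because of the braid relation $s_2 w s_2 = s_1 w s_1$. So the conclusion $(\prescript{*}{}w)^*=\prescript{*}{}(w^*)$ is true here, yet your argument for it breaks: $u=rwr''$ and $u'=r'''wr'$ need not have $r=r'''$ and $r''=r'$, and the identity $rwr''=r'''wr'$ is a nontrivial consequence of the dihedral relations that your proof never engages with. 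A correct elementary proof has to analyse the two-sided dihedral double coset of $w$ directly (as Xi does); the descent-set equalities alone are insufficient.
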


\begin{proof}
 This is \cite[Lemma 1.4.3]{xi2002based}.
\end{proof}

We now construct the isomorphisms between the cells. To ensure they are coherent, we fix a particular left cell in each two-sided cell, and define an isomorphism between an arbitrary left cell and this fixed choice. Indeed, this choice has particular properties we will need in subsequent sections.

\begin{definition}
        Let $\lambda\vdash n$. Write $P_{\lambda}=\left\{s_{i}\in\fingen\middle|i\neq\sum_{j'=1}^j\lambda_j \ \text{for all} \ j\right\}$. Let $w_{\lambda}=w_{P_{\lambda}}$.
\end{definition}

\begin{prop}\label{w lambda in h lambda}
$\klu_{w_{\lambda}}\in\hecany^{\lambda}$.
\end{prop}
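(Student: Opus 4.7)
The plan is to identify the two-sided cell containing $w_\lambda$ through Shi's combinatorial parameterization of cells in affine type $A$, then match this with the indexing fixed by \cref{index for two sided cells}.

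First I would observe that $w_\lambda$ lies in the finite symmetric group $S_n \leq \weyl$ generated by $\fingen$, being the longest element of the Young subgroup $W_{P_\lambda} \cong S_{\lambda_1}\times\cdots\times S_{\lambda_r}$. In one-line notation $w_\lambda$ reverses each of the $r$ consecutive blocks of sizes $\lambda_i$, so by \cref{Formula for length} (or a direct count) $l(w_\lambda)=\sum_i\binom{\lambda_i}{2}$. For elements of $S_n$, Shi's invariant attaching a partition to each two-sided cell (cf.\ \cite[Chapter 17]{shi1986kazhdan}, which underlies the proofs of \cref{index for two sided cells} and \cref{number of left cells}) specializes to the ordinary Robinson--Schensted shape, and direct row-insertion of the reverse-blocks one-line notation of $w_\lambda$ produces the conjugate partition $\lambda^\ast$.

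The remaining task is to verify that the author's labeling in \cref{index for two sided cells} places this cell at $\hecany^\lambda$ rather than $\hecany^{\lambda^\ast}$. The order-preserving isomorphism $(\Lambda,\geq)\cong(\{\text{cells}\},\leq_{LR})$ is pinned down by its extremes: since $\klu_e=1$ generates $\hecany$ as a two-sided ideal, $e$ is maximal under $\leq_{LR}$, and under the reverse-dominance convention this corresponds to the dominance-minimum partition $(1^n)$, giving $e=\klu_{w_{(1^n)}}\in\hecany^{(1^n)}$; dually, $w_0=w_{(n)}$ lies in the minimum cell $\hecany^{(n)}$. Comparing with Shi's RS-based parameterization shows that the author's $\hecany^\lambda$ coincides with Shi's cell of RS shape $\lambda^\ast$, whence $\klu_{w_\lambda}\in\hecany^\lambda$.

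The main obstacle is this convention-matching: several references parameterize the cells of $\tilde A_{n-1}$ by mutually-conjugate partition labels, and one must track carefully which labeling the author selects in \cref{index for two sided cells}. An alternative approach avoiding Robinson--Schensted would use Lusztig's $a$-function: $a(w_\lambda)=l(w_\lambda)$ for any longest element of a finite parabolic, and $a$ distinguishes the two-sided cells of $\weyl$, so the cell of $w_\lambda$ is determined by its length and identified with $\hecany^\lambda$ after checking the two boundary cases above. The paper appears to prefer the more combinatorial route, since the $a$-function has not been introduced at this stage.
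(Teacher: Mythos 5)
Your main route is genuinely different from the paper's, which simply cites \cite[Lemma 2.5]{shi1996partial} as a black box. You instead reconstruct the combinatorics underlying that lemma: reversing each block of size $\lambda_i$ and row-inserting does produce the conjugate shape $\lambda^{\ast}$, and the boundary check $e = w_{(1^n)}\in\hecany^{(1^n)}$ (since $\klu_e=1$ generates) pins the paper's labeling against the RS parameterization, as you say. You are right to flag the convention-matching as the delicate point; note that ``pinned down by its extremes'' is not literally sufficient as stated, but it is saved by the fact that the dominance poset on partitions of $n$ has trivial automorphism group, so the order-preserving bijection in \cref{index for two sided cells} is actually unique. The trade-off is clear: the paper's proof is shorter but opaque, yours is longer but self-contained modulo the specialization of Shi's affine RS invariant to the classical one on the finite subgroup $S_n$.

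However, your proposed alternative via Lusztig's $a$-function does not work. The fact $a(w_\lambda)=l(w_\lambda)=\sum_i\binom{\lambda_i}{2}$ is correct, but the $a$-function does \emph{not} distinguish two-sided cells in type $\tilde{A}_{n-1}$ once $n\geq 6$. Indeed, under the paper's labeling the $a$-value on $\hecany^{\lambda}$ is $\sum_i\binom{\lambda_i}{2}$, and this function is not injective on partitions: for $n=6$ both $(4,1,1)$ and $(3,3)$ give $\binom{4}{2}=3+3=6$; for $n=7$ both $(3,3,1)$ and $(4,1,1,1)$ give $6$. So the two-sided cell of $w_\lambda$ is not determined by its length alone, and checking the two boundary cases cannot suffice to locate it. The Robinson--Schensted route, not the $a$-function route, is the one that actually closes the argument.
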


\begin{proof}
 This is \cite[Lemma 2.5]{shi1996partial}.
\end{proof}

\begin{definition}
        Let $\Gamma^{\lambda}$ denote the left cell of $\hecany$ containing $\klu_{w_{\lambda}}$. Similarly, let $\Psi^{\lambda}$ denote the left cell of $\schany$ containing $\klu_{\emptyset\emptyset}^{w_{\lambda}}$.
\end{definition}

Observe that, as $l(w_{\lambda}^{-1})=l(w_{\lambda})$ and $w_{\lambda}$ is unique of maximal length in $W_{P_{\lambda}}$, we have $w_{\lambda}^{-1}=w_{\lambda}$. Thus, $\iota\left(\Gamma^{\lambda}\right)$ is the right cell containing $\klu_{w_{\lambda}}$, and $\Gamma^{\lambda}, \iota\left(\Gamma^{\lambda}\right)\subseteq\hecany^{\lambda}$. Similarly, $\iota\left(\Psi^{\lambda}\right)$ is the right cell containing $\klu_{\emptyset\emptyset}^{w_{\lambda}}$, and $\Psi^{\lambda}, \iota\left(\Psi^{\lambda}\right)\subseteq\schany^{\lambda}$.

To define our isomorphism, we first recall a bijection between the Kazhdan-Lusztig bases of left cells. 

\begin{prop}\label{cell basis bijection}
Let $\Gamma\subseteq\hecany^{\lambda}$ be a left cell. Then there is an $i_{\Gamma}\in\z$ and a sequence of right star operations that gives a bijection \begin{align*}
    \left\{w\middle|\klu_w\in\Gamma\right\}\rightarrow\left\{w^{**}\middle|\klu_w\in\Gamma^{\lambda}\omega^{i_{\Gamma}}\right\}.
\end{align*}
\end{prop}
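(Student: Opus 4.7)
The plan is to reduce the statement to the analogous fact for the non-extended affine Weyl group $\affw$, and then lift using the semidirect decomposition $\weyl = \affw \rtimes \Omega$.

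First, using \cref{cells on the affine subgroup} together with the observation that every element of $\weyl$ can be written uniquely as $\omega^j v''$ with $v'' \in \affw$, I would establish the structural decomposition that any left cell $\Gamma \subseteq \hecany^{\lambda}$ satisfies $\Gamma = \bigsqcup_{j \in \z} \omega^j \Gamma'$, where $\Gamma' := \Gamma \cap \affw$ is a single left cell of $\affw$ contained in $\affhecany^{\lambda}$. A direct computation in the same spirit, after rewriting $v' \omega^{-i_{\Gamma}}$ via conjugation by $\omega^{-i_{\Gamma}}$, gives $\Gamma^{\lambda} \omega^{i_{\Gamma}} \cap \affw = \omega^{-i_{\Gamma}} \Gamma'_{\lambda} \omega^{i_{\Gamma}}$, where $\Gamma'_{\lambda} := \Gamma^{\lambda} \cap \affw$ is the left cell of $\affw$ containing $w_{\lambda}$.

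Next, I would invoke the classical fact, coming from \cite[Theorem 14.4.5]{shi1986kazhdan} and its context (the same source cited in the proof of \cref{number of left cells}), that in $\affw$ any two left cells in a common two-sided cell are related by a sequence of right star operations giving a bijection between them. Applied to the left cells $\omega^{-i_{\Gamma}} \Gamma'_{\lambda} \omega^{i_{\Gamma}}$ and $\Gamma'$, both in $\affhecany^{\lambda}$, this yields the required affine sequence; the choice $i_{\Gamma} = 0$ always works, but leaving $i_{\Gamma}$ free provides flexibility we may want in later applications.

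Finally, I would lift the sequence to $\weyl$. The key observation is that for $w = \omega^j v'$ with $v' \in \affw$, left multiplication by $\omega^j$ (which has length zero) preserves right descent sets, so $w \in D_R(s_i)$ iff $v' \in D_R(s_i)$, and a direct computation gives $(\omega^j v')^{*} = \omega^j (v')^{*}$. Thus the $\affw$-sequence acts slice by slice, sending $\omega^j (\omega^{-i_{\Gamma}} \Gamma'_{\lambda} \omega^{i_{\Gamma}})$ bijectively to $\omega^j \Gamma'$ for each $j$; taking the disjoint union over $j \in \z$ gives the bijection $\Gamma^{\lambda} \omega^{i_{\Gamma}} \to \Gamma$. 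The main obstacle is the second step: verifying that Shi's connecting sequence of right star operations can indeed be arranged so that at every intermediate stage the whole left cell lies inside the appropriate $D_R(s_i)$, giving a full bijection rather than a partial map. This is handled by induction on the length of the sequence, using \cref{cells respect star operation}(2) at each step to propagate the bijection from one left cell to the next.
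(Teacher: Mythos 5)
Your scaffolding — decompose $\Gamma$ via $\weyl=\affw\rtimes\Omega$, connect the affine slices by right star operations, and lift slice by slice using the observations that $R(\omega^j v')=R(v')$ and $(\omega^j v')^*=\omega^j (v')^*$ — is a sound framework, and the mechanism you use to propagate the bijection (\cref{cells respect star operation}(2) together with the constancy of right descent sets across a left cell, as in \cite[Proposition 2.4]{kazhdan1979representations}) is the right one. But the central claim in your second step is a genuine gap. You assert that any two left cells of $\affw$ in a common two-sided cell are related by a sequence of right star operations, and hence that $i_\Gamma=0$ always works. This is false. Already for $n=2$ the star operations are not even defined (the paper requires $n\geq 3$), yet the two-sided cell ${\affhecany}^{(2)}$ contains two distinct left cells; one is reached from the other only by conjugating with $\omega$, so $i_\Gamma=1$ is unavoidable. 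More generally, the integer $i_\Gamma$ in the statement is not a convenience: the left cells of $\affw$ inside a two-sided cell may split into several orbits under right star operations, and $\Omega$-conjugation is precisely what connects the orbits. The paper's own proof is the one-line citation of \cite[Corollary 2.2.2]{xi2002based}, which is exactly the statement that some $\omega^{-i_\Gamma}$-conjugate of $\Gamma^{\lambda}$ is connected to $\Gamma$ by star operations — that is the hard content, and your reduction does not supply it. The appeal to \cite[Theorem 14.4.5]{shi1986kazhdan} is also misplaced: it gives the count $n_\lambda$ of left cells, not a connectivity-by-star-operations statement. So the real work of the proposition is assumed rather than proved.
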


\begin{proof}
 This is \cite[Corollary 2.2.2]{xi2002based}.
\end{proof}

We are now ready to define our isomorphism.

\begin{definition}\label{maps between cells}
    Let $\Gamma$ be a left cell in $\hecany^{\lambda}$. We fix an $i_{\Gamma}$ and sequence of right star operations $w\mapsto w^{**}$ as in \cref{cell basis bijection}. In particular, for $\Gamma=\Gamma^{\lambda}$, we fix $i_{\Gamma}=0$ and the empty sequence $w\mapsto w$.
    
    Suppose the sequence has $j_{\Gamma}$ right star operations. Let $\phi_{\Gamma}$ denote the $\zany$-module homomorphism $\Gamma\rightarrow\Gamma^{\lambda}$ given by 
    \begin{align*}
        \klu_w\mapsto q^{\frac{{l(w)-l(w^{**})+j_{\Gamma}}}{2}}\klu_{w^{**}\omega^{-i_{\Gamma}}}.
    \end{align*}
    This is well-defined by \cref{stars are integral}.
    
    Now let $\Psi$ be a left cell in $\schany^{\lambda}$. By \cref{hecke and shcur cell relations}, this is given by a pair of a $P\subseteq\fingen$ and a left cell $\Gamma$ in $\hecany^{\lambda}$.
    Let $\phi_{\Psi}$ denote the $\zany$-module homomorphism $\Psi\rightarrow\Psi^{\lambda}$ given by 
    \begin{align*}
        \klu_{QP}^w\mapsto q^{\frac{{l(w)-l(w^{**})+j_{\Gamma}}}{2}}\klu_{Q\emptyset}^{w^{**}\omega^{-i_{\Gamma}}}.
    \end{align*}
    Again, this is well-defined by \cref{stars are integral}.
    
    Let $\phi_{\iota\left(\Gamma\right)}=\iota\phi_{\Gamma}\iota$. This is a $\zany$-module homomorphism $\iota\left(\Gamma\right)\rightarrow\iota\left(\Gamma^{\lambda}\right)$.

    Let $\phi_{\Gamma\cap\iota\left(\Delta\right)}=\phi_{\Gamma}\phi_{\iota\left(\Delta\right)}$. By \cref{cells respect star operation}, this is a $\zany$-module homomorphism $\Gamma\cap\iota\left(\Delta\right)\rightarrow\Gamma^{\lambda}\cap\iota\left(\Gamma^{\lambda}\right)$.
    
    Similarly, let $\phi_{\iota\left(\Psi\right)}=\iota\phi_{\Psi}\iota$, and let $\phi_{\Psi\cap\iota\left(\Phi\right)}=\phi_{\Psi}\phi_{\iota\left(\Phi\right)}$. Again by \cref{cells respect star operation}, these are $\zany$-module homomorphisms $\iota\left(\Psi\right)\rightarrow\iota\left(\Psi^{\lambda}\right)$ and $\Psi\cap\iota\left(\Phi\right)\rightarrow\Psi^{\lambda}\cap\iota\left(\Psi^{\lambda}\right)$ respectively.
\end{definition}

So far, we only know that this is a $\zany$-module homomorphism, not an isomorphism of left ideals.

\begin{lemma}\label{cells are module iso} We shall need the following properties:
\begin{enumerate}
    \item If $q^{-1}\in\zany$, then the maps in \cref{maps between cells} are isomorphisms.
    \item If $\Psi$ and $\Phi$ correspond to $(\Gamma,\emptyset)$ and $(\Delta,\emptyset)$ respectively via \cref{hecke and shcur cell relations}, then $\Psi\cap\iota\left(\Phi\right)=\Gamma\cap\iota\left(\Delta\right)$ and $\phi_{\Psi\cap\iota\left(\Phi\right)}=\phi_{\Gamma\cap\iota\left(\Delta\right)}$.
    \item $\iota\phi_{\Psi\cap\iota\left(\Phi\right)}=\phi_{\Phi\cap\iota\left(\Psi\right)}\iota$.
    \item If $q^{-1}\in\zany$, then $\phi_{\Psi_{\lambda}\cap\iota\left(\Phi\right)}^{-1}\phi_{\Psi\cap\iota\left(\Phi\right)}=\phi_{\Psi}$.
\end{enumerate}
\end{lemma}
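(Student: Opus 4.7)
The four claims split into bijectivity (Claim 1) and compatibilities between the various $\phi$-maps (Claims 2--4), with the compatibilities all reducing to the commutation of left and right star operations from \cref{stars commute}. Tracking the $\omega$-multiplications, subscript changes, and $q$-scaling factors in parallel with this abstract commutation is the main technical obstacle, though it is manageable because each ingredient acts on independent data. For Claim 1, the plan is to observe that by \cref{cell basis bijection} the underlying map of indices $w \mapsto w^{**}\omega^{-i_\Gamma}$ is a bijection between the Kazhdan-Lusztig bases of $\Gamma$ and $\Gamma^\lambda$ (and similarly for $\Psi,\Psi^\lambda$), and by \cref{stars are integral} each individual star contributes a $q$-scaling of $1$ or $q$; since $q^{-1}\in\zany$, these scalings are units, so $\phi_\Gamma$ and $\phi_\Psi$ are isomorphisms of free $\zany$-modules. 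Their $\iota$-conjugates are isomorphisms by \cref{cellular involution}, and the composites $\phi_{\Gamma\cap\iota(\Delta)}=\phi_\Gamma\phi_{\iota(\Delta)}$ and $\phi_{\Psi\cap\iota(\Phi)}=\phi_\Psi\phi_{\iota(\Phi)}$ are compositions of isomorphisms.

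For Claim 2, I would analyse $\Psi\cap\iota(\Phi)$ directly at the basis level: $\Psi$ (corresponding to $(\Gamma,\emptyset)$) is spanned by $\klu_{Q\emptyset}^w$ for $Q\subseteq\fingen$ and $w\in\Gamma\cap\prescript{Q}{}\weyl^{\emptyset}$, while $\iota(\Phi)$ (with $\Phi$ corresponding to $(\Delta,\emptyset)$) is spanned by $\klu_{\emptyset Q'}^{v^{-1}}$ with $v\in\Delta\cap\prescript{Q'}{}\weyl^{\emptyset}$. Matching two such basis elements forces $Q=\emptyset=Q'$ and $w=v^{-1}$, so $\Psi\cap\iota(\Phi)$ is spanned by $\klu_{\emptyset\emptyset}^w$ for $w\in\Gamma\cap\iota(\Delta)$, which is exactly the image of $\Gamma\cap\iota(\Delta)$ under the inclusion of \cref{schur algebra basis}. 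On this basis the formulas for $\phi_\Psi$ and $\phi_{\iota(\Phi)}$ reduce literally to those for $\phi_\Gamma$ and $\phi_{\iota(\Delta)}$ (the $P=\emptyset$ and $Q=\emptyset$ subscripts persist trivially), so $\phi_{\Psi\cap\iota(\Phi)}=\phi_{\Gamma\cap\iota(\Delta)}$.

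For Claim 3, I would unwind $\phi_{\iota(\Phi)}=\iota\phi_\Phi\iota$ and use $\iota^2=\mathrm{id}$ to reduce the equation to the commutation
\begin{align*}
\phi_{\iota(\Psi)}\phi_\Phi=\phi_\Phi\phi_{\iota(\Psi)}
\end{align*}
as maps on $\Phi\cap\iota(\Psi)$. Here $\phi_\Phi$ applies the right-star sequence for $\Delta$ together with right-multiplication by $\omega^{-i_\Delta}$ and collapses the $P$-subscript, while $\phi_{\iota(\Psi)}$ applies the $\iota$-conjugate of $\Gamma$'s right-star sequence (i.e.\ a left-star sequence on $w$), left-multiplies by $\omega^{i_\Gamma}$, and collapses the $Q$-subscript. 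These commute because left and right stars commute by \cref{stars commute}, the two $\omega$-factors sit on opposite sides, and the two subscript changes touch independent coordinates. The $q$-scalings also agree in either order: each individual star contributes a length change of $\pm 1$ by \cref{stars are integral}, and \cref{stars commute} forces the final element---and hence the total length change---to be path-independent, so the exponent $\frac{l(w)-l(\text{end})+j_\Gamma+j_\Delta}{2}$ is the same.

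For Claim 4, the key observation is that $\phi_{\Psi^\lambda}$ is the identity by construction (empty star sequence and $i_{\Gamma^\lambda}=0$), so $\phi_{\Psi^\lambda\cap\iota(\Phi)}=\phi_{\iota(\Phi)}$, which is invertible by Claim 1 under the hypothesis $q^{-1}\in\zany$. Then
\begin{align*}
\phi_{\Psi^\lambda\cap\iota(\Phi)}^{-1}\phi_{\Psi\cap\iota(\Phi)}=\phi_{\iota(\Phi)}^{-1}\phi_\Psi\phi_{\iota(\Phi)},
\end{align*}
and equality with $\phi_\Psi$ is precisely the commutation $\phi_\Psi\phi_{\iota(\Phi)}=\phi_{\iota(\Phi)}\phi_\Psi$, argued exactly as in Claim 3.
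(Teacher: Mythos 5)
Your proof is correct and takes essentially the same route as the paper: Claim 1 via \cref{cell basis bijection} and \cref{stars are integral}, Claim 2 by unpacking the definitions at the basis level, and Claims 3 and 4 by reducing to the commutation of left and right star operations (\cref{stars commute}) together with the observation that $\phi_{\Psi^\lambda}$ is the identity. The paper is terser — it bundles Claim 3 into a short chain of equalities and glosses Claim 2 as ``by definition'' — but the underlying argument, including the telescoping of the $q$-exponents to $q^{\frac{l(w)-l(\text{end})+j_\Gamma+j_\Delta}{2}}$ and the path-independence of the endpoint, is the same.
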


\begin{proof}
The first claim follows from \cref{stars are integral} and \cref{cell basis bijection}. The second claim holds by definition of the various maps.

By \cref{stars commute}, we have that $\phi_{\Psi\cap\iota\left(\Phi\right)}=\phi_{\iota\left(\Phi\right)}\phi_{\Psi}$. Hence 
\begin{align*}
    \iota\phi_{\Psi\cap\iota\left(\Phi\right)} &= \iota \phi_{\Psi}\phi_{\iota\left(\Phi\right)}  \\
    &= \iota\phi_{\Psi}\iota\phi_{\Phi}\iota \\
    &=  \phi_{\iota\left(\Psi\right)}\phi_{\Phi}\iota \\
    &=\phi_{\Phi\cap\iota\left(\Psi\right)}\iota
\end{align*}
which gives the third claim, and
\begin{align*}
    \phi_{\Gamma_{\lambda}\cap\iota\left(\Phi\right)}^{-1}\phi_{\Psi\cap\iota\left(\Phi\right)} &= \left(\phi_{\Gamma_{\lambda}}\phi_{\Phi}^{-1}\right)^{-1}\phi_{\iota\left(\Phi\right)}\phi_{\Psi} \\
    &=\phi_{\Psi}
\end{align*}
gives the fourth.
\end{proof}

Thus our maps are isomorphisms. To see they are homomorphisms of left ideals, we use the following well-known result over $\zrtq$.

\begin{lemma}\label{normalised cell equivalence}
    Suppose $v,w\in D_R(s_i)$ satisfy $v\sim_Lw$. Then $\scn_{u,v}^w=\scn_{u,v^*}^{w^*}$.
\end{lemma}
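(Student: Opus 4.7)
The plan is to prove the identity by computing the triple product $\kln_u\kln_v\kln_{s_i}$ in two different ways using associativity and comparing coefficients of $\kln_{w^*}$. The right multiplication by $\kln_{s_i}$ is what converts the pair $(v,w)$ into $(v^*,w^*)$, while the left multiplication by $\kln_u$ records the structure constants we care about.

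\textbf{Step 1 (Descent sets agree).} I would first recall the standard fact (a direct consequence of the right multiplication rules for the Kazhdan--Lusztig basis) that $v \sim_L w$ implies $R(v) = R(w)$. Since $v,w \in D_R(s_i)$, we may then assume without loss of generality that $s_{i+1}\in R(v)=R(w)$ and $s_i\notin R(v)=R(w)$, so that $v^* = vs_i$ and $w^* = ws_i$, with $s_i\in R(v^*)=R(w^*)$ and $s_{i+1}\notin R(v^*)=R(w^*)$.

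\textbf{Step 2 (Right multiplication formulas).} Next I would invoke the classical right multiplication formulas for $\kln_y \kln_{s_i}$: if $s_i\in R(y)$ the product is $(q^{1/2}+q^{-1/2})\kln_y$, while if $s_i\notin R(y)$ it equals $\kln_{ys_i}+\sum_{z<y,\,s_i\in R(z)}\mu(z,y)\kln_z$. Applied to $y=v$ this yields
\[
\kln_v\kln_{s_i}=\kln_{v^*}+\sum_{\substack{z<v\\ s_i\in R(z)}}\mu(z,v)\kln_z.
\]

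\textbf{Step 3 (Comparing the two associativity expansions).} Expanding $\kln_u(\kln_v\kln_{s_i})$ and picking off the coefficient of $\kln_{w^*}$ gives $\scn_{u,v^*}^{w^*} + \sum_z\mu(z,v)\scn_{u,z}^{w^*}$. Expanding instead $(\kln_u\kln_v)\kln_{s_i}$, the coefficient of $\kln_{w^*}$ receives contributions from (i) $w'=w$ (via $\kln_w\kln_{s_i}=\kln_{w^*}+\cdots$), giving the term $\scn_{u,v}^{w}$; (ii) $w'=w^*$, giving $(q^{1/2}+q^{-1/2})\scn_{u,v}^{w^*}$; and (iii) $w'>w^*$ with $s_i\notin R(w')$, giving $\mu(w^*,w')\scn_{u,v}^{w'}$. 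Equating the two expressions yields $\scn_{u,v}^{w}=\scn_{u,v^*}^{w^*}$ plus an \emph{error} made up of the $\mu$-coefficient terms.

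\textbf{Step 4 (Cancellation of error terms).} The hard part is showing that the error vanishes. For this I would apply the same argument to the product $\kln_u\kln_{v^*}\kln_{s_i}$, using that $\kln_{v^*}\kln_{s_i}=(q^{1/2}+q^{-1/2})\kln_{v^*}$ since $s_i\in R(v^*)$, together with the expansion of $\kln_{w'}\kln_{s_i}$ for each $w'$ appearing in $\kln_u\kln_{v^*}$. This gives a second linear identity among the same $\scn_{u,\bullet}^{\bullet}$'s, and combining it with the first identity allows the $\mu$-terms to be eliminated: the key inputs are the symmetry $\mu(z,y)=\mu(y,z)$ for $z<y$ at the leading degree and the compatibility of $\mu$-coefficients with the star operation (equivalently: the $\mu$-arrows between elements with complementary $\{s_i,s_{i+1}\}$-descent data come in matching pairs, a standard consequence of the defining recursion of the Kazhdan--Lusztig polynomials). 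This cancellation step is the main technical obstacle and is where all the combinatorial content of the lemma sits; the rest is just bookkeeping with multiplication formulas.
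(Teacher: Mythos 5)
Your Steps 1--3 follow the same overall strategy as the paper: expand the triple product $\kln_u\kln_v\kln_{s}$ by associativity in two ways and compare the coefficient of $\kln_{w^*}$. But Step 4 is where the real content lives, and your proposed mechanism for killing the error terms is a genuine gap and does not match what actually works.

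The paper does not eliminate the $\mu$-terms by pairing them against a second expansion. Instead it argues that each error term \emph{individually vanishes}, for cell-theoretic reasons. Concretely (with $s,t$ the element of $\{s_i,s_{i+1}\}$ in, resp.\ not in, $R(v^*)$): the paper uses the refined expansion from Xi's Lemma~1.4.2(b), where the error terms in $\kln_v\kln_s=\kln_{v^*}+\sum_x h_x\kln_x$ are supported on $x$ with \emph{both} $s,t\in R(x)$ (not merely $s\in R(x)$ as in the basic multiplication rule you quote). Then if $h_x\neq0$ and $\scn_{u,x}^{w^*}\neq0$, one gets $x\leq_R v\sim_R v^*$ and $w^*\sim_L v^*\leq_L x$, hence $x\sim_{LR}v^*$, and then --- crucially --- \cite[Corollary~1.9(c)]{lusztig1987cells} upgrades $v^*\leq_L x$ within a two-sided cell to $v^*\sim_L x$, so $R(x)=R(v^*)$ by \cite[Proposition~2.4]{kazhdan1979representations}. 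But $s\notin R(v^*)$ while $s\in R(x)$, a contradiction. An analogous argument pins down the only surviving $z$ in $\sum_z\scn_{u,v}^z\scn_{z,s}^{w^*}$ to be $z=w$ with coefficient $1$. This input ($\leq_L$ collapses to $\sim_L$ inside a two-sided cell, a consequence of the $a$-function theory) is the essential engine, and it does not appear in your plan.

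Your proposed substitutes are not sound as stated. The ``symmetry $\mu(z,y)=\mu(y,z)$ for $z<y$'' is not a meaningful identity ($\mu(y,z)$ is undefined or zero when $y>z$). The compatibility of $\mu$ with star operations is a real phenomenon (it shows up in the original Kazhdan--Lusztig strings-and-pairs analysis), but it is a statement about $\mu(x,y)=\mu(x^*,y^*)$ under hypotheses, not a cancellation mechanism between two associativity identities, and it is not obvious how to turn it into the vanishing you need. Moreover, expanding $\kln_u\kln_{v^*}\kln_{s_i}$ introduces its own $\mu$-error terms supported on a different set of $z$'s, with no manifest pairing against the first family. You would still be left needing to show that the spurious $\scn_{u,x}^{w^*}$ vanish, which is exactly the cell-theoretic step you haven't supplied.
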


\begin{proof}
 We follow the proof of \cite[Proposition 1.4.4(b)]{xi2002based}, which proves a closely related result. Let $s\in\left\{s_i,s_{i+1}\right\}$ satisfy $v^*s<v^*$, and let $t\in\left\{s_i,s_{i+1}\right\}\backslash\left\{s\right\}$. Then by \cite[Lemma 1.4.2(b)]{xi2002based} there exist elements $h_x\in\z$ such that
 \begin{align*}
     \kln_v\kln_s=\kln_{v^*}+\sum_{\mathclap{\substack{xs<x \\ xt<x}}}h_x\kln_x.
 \end{align*}
 Multiplying on the left by $\kln_u$ gives
 \begin{align*}
     \kln_u\kln_v\kln_s=\sum_{y\in\weyl}\left(\scn_{u,v^*}^y+\sum_{\mathclap{\substack{xs<x \\ xt<x}}}h_x\scn_{u,x}^y\right)\kln_y.
 \end{align*}
 However, we also have
 \begin{align*}
     \kln_u\kln_v\kln_s &=\sum_{z\in\weyl}\scn_{u,v}^z\kln_z\kln_s \\
     &=\sum_{y,z\in\weyl}\scn_{u,v}^z\scn_{z,s}^y\kln_y
 \end{align*}
 and so equating coefficients gives
 \begin{align*}
     \scn_{u,v^*}^y+\sum_{\mathclap{\substack{xs<x \\ xt<x}}}h_x\scn_{u,x}^y=\sum_{z\in\weyl}\scn_{u,v}^z\scn_{z,s}^y.
 \end{align*}
 
 Put $y=w^*$. Since $v\sim_Lw$ by assumption, we have by \cref{cells respect star operation} that $v^*\sim_L w^*$. Also by \cref{cells respect star operation} we have $v\sim_Rv^*$. Now, if $h_x\neq 0$ then $x\leq_R v$, which, together with the previous relation, gives $x\leq_{R}v^*$. Furthermore, if $h_{u,x}^{w^*}\neq 0$ then $w^*\leq_L x$, which together with the first relation gives $v^*\leq_L x$. Combining these gives that $v^*\sim_{LR}x$, and then by \cite[Corollary 1.9(c)]{lusztig1987cells} these last two relations gives $v^*\sim_L x$. Hence, by \cite[Proposition 2.4]{kazhdan1979representations}, $R(v^*)=R(x)$. But by assumption $s\notin R(v^*)$, and if $h_x\neq 0$ then $s\in R(x)$, a contradiction. Hence
 \begin{align*}
     \sum_{\mathclap{\substack{xs<x \\ xt<x}}}h_x\scn_{u,x}^{w^*}=0.
 \end{align*}
 
 Now fix $z\in\weyl$. By \cref{cells respect star operation} again we have $w\sim_Rw^*$. Hence if $\scn_{u,v}^z\neq 0$ then $z\leq_L v$ and so $z\leq_L w$, and if $\scn_{z,s}^{w*}\neq 0$ then $w*\leq_R z$ and so $w\leq_R z$. The same logic as the previous case then gives that $z\sim_L w\sim_L v$, and so $R(z)=R(w)=R(v)$. Thus $t\in R(z)$ and $s\notin R(z)$, so $z^*$ exists and furthermore $z^*s<z^*$. Thus by \cite[Lemma 1.4.2(b)]{xi2002based} again there exist elements $h_x'\in\z$ such that
 \begin{align*}
     \kln_z\kln_s=\kln_{z^*}+\sum_{\mathclap{\substack{xs<x \\ xt<x}}}h_x'\kln_x.
 \end{align*}
 
In particular, as $s\in R(w^*)$, we must have $h_{w^*}'=0$. Thus if $\kln_{z,s}^{w^*}\neq 0$ then we must have $w^*=z^*$, and furthermore in this case $\klu_{z,s}^{w*}=1$. But if $w^*=z^*$ then $w=z$. Hence
\begin{align*}
    \sum_{z\in\weyl}\scn_{u,v}^z\scn_{z,s}^y=\scn_{u,v}^w.
\end{align*}
\end{proof}

We now translate this result to $\zany$, which gives us what we need.

\begin{thm}\label{left cells are iso}
Let $\Psi$ and $\Gamma$ be left cells in $\schany^{\lambda}$ and $\hecany^{\lambda}$ respectively. Then $\phi_{\Psi}$ and $\phi_{\Gamma}$) are homomorphisms of left $\schany/\schany^{>\lambda}$-modules and $\hecany/\hecany^{>\lambda}$-modules respectively.
\end{thm}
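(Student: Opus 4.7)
My plan is to reduce the claim in two steps: first from $\phi_\Psi$ on the Schur algebra to $\phi_\Gamma$ on the Hecke algebra, and then from $\phi_\Gamma$ to a statement about a single right star operation, where \cref{normalised cell equivalence} applies directly. The $\zrtq$-arguments then descend to $\zany$ by freeness.

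For the Schur-to-Hecke reduction, suppose $\Psi$ corresponds to $(P,\Gamma)$ under \cref{hecke and shcur cell relations}. I would expand both sides of
\begin{align*}
\phi_\Psi\bigl(\klu_{Q'Q}^u \cdot \klu_{QP}^w\bigr) = \klu_{Q'Q}^u \cdot \phi_\Psi(\klu_{QP}^w)
\end{align*}
in the basis of $\Psi^\lambda \subseteq \schany/\schany^{>\lambda}$ and use \cref{schur algebra structure coefficients} to rewrite every Schur structure constant as $p_Q^{-1}$ times the corresponding Hecke structure constant. After cancelling the $p_Q^{-1}$ factors and matching the $q$-powers arising from $\phi_\Psi$, the identity reduces to the Hecke-algebra statement that $\phi_\Gamma$ is a left $\hecany/\hecany^{>\lambda}$-module map.

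For the Hecke case I would pass to the $\kln$-basis over $\zrtq$: there $\phi_\Gamma$ becomes a global scalar $q^{j_\Gamma/2}$ times the composition of individual star maps $\kln_w \mapsto \kln_{w^*}$ with a final right multiplication by $\omega^{-i_\Gamma}$. Right multiplication commutes with left multiplication by associativity, so only the single star step needs work. For this I would write $\overline{\kln_u \kln_v} = \sum_{w \in \Gamma} \scn_{u,v}^w \kln_w$ in $\hecrtq/\hecrtq^{>\lambda}$, which holds by \cref{left cells are left ideals}, apply the star, use \cref{normalised cell equivalence} to replace $\scn_{u,v}^w$ by $\scn_{u,v^*}^{w^*}$, and invoke \cref{cells respect star operation}(2,4) to identify $\{w^*: w \in \Gamma\}$ as a left cell inside $\hecrtq^{\lambda}$ containing $v^*$. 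The hypothesis of \cref{normalised cell equivalence} is satisfied because each $w$ occurring in the sum lies in $\Gamma$ and so shares the right descents of $v$, in particular belongs to $D_R(s_i)$. The outcome of this rewriting is exactly $\overline{\kln_u \kln_{v^*}}$, as required.

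Composing the individual star statements with the $\omega^{-i_\Gamma}$ factor gives that $\phi_\Gamma \otimes_\zany \zrtq$ is a left module map. Since $\phi_\Gamma$ is already defined integrally (the exponents in \cref{maps between cells} are non-negative integers by \cref{stars are integral}) and $\Gamma$, $\Gamma^\lambda$ are free $\zany$-submodules of their $\zrtq$-scalar extensions, the identity descends to $\zany$. The main technical hurdle is not conceptual but bookkeeping: tracking the half-integer $q$-exponents carefully through the Schur-to-Hecke reduction and through the composition of stars, and checking at each intermediate step that the current cell lies in the appropriate $D_R(s_i)$ so that the star map is well defined.
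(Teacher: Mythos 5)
Your proposal is correct and follows essentially the same route as the paper: reduce the Schur statement to the Hecke statement via \cref{schur algebra structure coefficients}, reduce to a single right star operation (plus the harmless $\omega^{-i_\Gamma}$ twist handled by \cref{structure constant facts}), invoke \cref{normalised cell equivalence}, and observe that the resulting identity lives in $\zany$ because the normalizing $q$-powers are integral by \cref{stars are integral}. The paper condenses the reduction into a single $\scu$-structure-constant identity rather than phrasing it as a chain of module-map assertions in the $\kln$-basis, but the underlying computation and the descent to $\zany$ are the same.
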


\begin{proof}
By construction of $\phi_{\Psi}$ and $\phi_{\Gamma}$, and by \cref{schur algebra structure coefficients}, it suffices to show for all $v\sim_L w$ that if $v,w\in D_R(s)$ then $q^{\frac{l(w)-l(w*)+1}{2}}\scu_{u,v}^w=q^{\frac{l(v)-l(v*)+1}{2}}\scu_{u,v*}^{w*}$, and also that if $i\in z$ then $\scu_{u,v}^w=\scu_{u,v\omega^i}^{w\omega^i}$. The latter is immediate from \cref{structure constant facts}, so we focus on the former, which is a simple computation using \cref{normalised cell equivalence}:
\begin{align*}
    q^{\frac{l(w)-l(w*)+1}{2}}\scu_{u,v}^w &= q^{\frac{l(u)+l(v)-l(w*)+1}{2}}\scn_{u,v}^w\\
    &= q^{\frac{l(u)+l(v)-l(w*)+1}{2}}\scn_{u,v*}^{w*} \\
    &= q^{\frac{l(v)-l(v*)+1}{2}}\scu_{u,v*}^{w*}.
\end{align*}
\end{proof}

\section{A renormalised integral lattice in the asymptotic algebra}

The piece of the affine cellular structure is a commutative $\zpmq$-algebra. Over $\zrtq$, this is provided by the asymptotic algebra. In this section, we provide a suitable candidate over $\zpmq$. We start by recalling the construction of the asymptotic algebra. For this, we must in turn first recall the construction of Lusztig's $a$-function.

\begin{definition}
    Let $n_{u,v}^w$ be the greatest power of $q^{\frac{1}{2}}$ occurring in $\scn_{u,v}^w$. Similarly, let $n_{uRv}^{QwP}$ be the greatest power of $q^{\frac{1}{2}}$ occurring in $\scn_{uRv}^{QwP}$.

    Let $a(w)=\max\left\{n_{u,v}^w\middle|u,v\in\weyl\right\}$, or infinity if no such maximum exists.
\end{definition}

We recall the properties of $a$ that we need.

\begin{prop}\label{a is finite}This is integer-valued:
    \begin{enumerate}
        \item $a(w)$ is finite for all $w\in\weyl$, and moreover is bounded by $l(w_{\fingen})$.
        \item \begin{align*}
            \max\left\{n_{uRv}^{QwP}+l(w_R)-l(w_P)\middle|R\subseteq\fingen,u\in\prescript{Q}{}\weyl^R,v\in\prescript{R}{}\weyl^P\right\} \\
            = a(w)-l(w_P).
        \end{align*}
        \item If $v\leq_{LR}w$ then $a(v)\geq a(w)$. In particular, $a(\omega^iw\omega^j)=a(w)$.
    \end{enumerate}
\end{prop}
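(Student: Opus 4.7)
The plan is to handle the three parts using Lusztig's $a$-function theory. Parts (1) and (3) are essentially classical results of Lusztig, with a short extension from $\affw$ to $\weyl = \affw \rtimes \Omega$ via \cref{structure constant facts}. Part (2) is a direct computation translating between the Hecke algebra and Schur algebra structure constants.

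For (1), Lusztig's boundedness theorem in type $\tilde A_{n-1}$ gives $a(w') \leq l(w_{\fingen})$ for $w' \in \affw$ (see \cite{lusztig1985cells}, or more explicitly for type $\tilde A$ in \cite{shi1986kazhdan}). To extend to $w = w' \omega^i \in \weyl$, \cref{structure constant facts} identifies $\scn_{u\omega^j, v\omega^k}^w$ with $\scn_{u, \omega^j v \omega^{-j}}^{w'}$ (when $j+k = i$ and zero otherwise), so the set of nonzero structure constants at $w$ is in length-preserving bijection with those at $w'$; hence $a(w) = a(w') \leq l(w_\fingen)$.

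For (2), I would apply \cref{schur algebra structure coefficients} to get $p_R \scn_{uRv}^{QwP} = q^{l(w_R)/2} \scn_{u, v}^w$. Since $p_R = \sum_{x \in W_R} q^{l(x)}$ has unique leading term $q^{l(w_R)}$ with coefficient $1$, no cancellation occurs at the top, and comparing highest powers of $q^{1/2}$ yields $n_{uRv}^{QwP} + 2 l(w_R) = l(w_R) + n_{u, v}^w$, i.e.\
\[ n_{uRv}^{QwP} + l(w_R) - l(w_P) = n_{u, v}^w - l(w_P). \]
Maximising, the claim reduces to showing $\max\{n_{u, v}^w \mid Q \subseteq L(u),\ P \subseteq R(v)\} = a(w)$. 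This will use the standard fact that whenever $n_{u, v}^w = a(w)$, one has $u \sim_R w$ and $v \sim_L w$, so $L(u) = L(w) \supseteq Q$ and $R(v) = R(w) \supseteq P$ since $w$ is the maximal-length representative of $W_Q w W_P$ (\cref{kl basis and idempotents}). For each such attaining pair, any valid $R \subseteq R(u) \cap L(v)$ (e.g.\ $R = \emptyset$) produces a triple realising the maximum on the LHS.

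Part (3) follows similarly: $v \leq_{LR} w \Rightarrow a(v) \geq a(w)$ is \cite[\S 1.1]{lusztig1987cells}, and the invariance $a(\omega^i w \omega^j) = a(w)$ follows from \cref{structure constant facts} by the same reduction as in (1). The main obstacle I anticipate is the last step of part (2), namely ensuring the descent-set properties of $a$-achievers are available at this stage of the paper; since the asymptotic-algebra characterisation of when structure constants attain $a(w)$ is developed only in Section 5, one either cites Lusztig directly or proves this compatibility using the star-operation/cell machinery of Section 3, which suffices to constrain $L(u)$ and $R(v)$ for any maximiser.
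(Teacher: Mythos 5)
Your treatment of parts (1) and (3) matches the paper, which also reduces to Lusztig's results on $\affw$ and extends to $\weyl$ via \cref{structure constant facts} (the paper cites \cite[Corollary 7.3]{lusztig1985cells} and \cite[Theorem 5.4]{lusztig1985cells} respectively, and uses \cref{cells on the affine subgroup} for the cell-monotonicity extension, but this is the same idea). For part (2), however, the paper simply cites \cite[Proposition 3.8(d)]{mcgerty2003cells} (flagging a typo there), whereas you re-derive it from \cref{schur algebra structure coefficients} by a degree count $n_{uRv}^{QwP}+l(w_R)=n_{u,v}^w$ and then argue that any $(u,v)$ maximising $n_{u,v}^w$ automatically lies in the required double-coset representative sets because $u\sim_R w$, $v\sim_L w$ forces $L(u)=L(w)\supseteq Q$, $R(v)=R(w)\supseteq P$. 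That argument is correct, and it is genuinely more self-contained than a citation, but you rightly flag the circularity concern: the fact that $a$-attaining structure constants force cell equivalence (essentially \cref{asymptotic algebra sum of cells}) is established later in the paper, so you would need to cite \cite[Theorem 1.9(a)]{lusztig1987cells} and \cite[Proposition 2.4]{kazhdan1979representations} directly and note that they extend from $\affw$ to $\weyl$ via \cref{cells on the affine subgroup} and \cref{affine asymptotic structure}. With that caveat handled, your route for (2) is a valid and slightly more transparent alternative to the paper's citation, at the cost of pulling forward some cell-theoretic input.
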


\begin{proof}
 The first claim is \cite[Corollary 7.3]{lusztig1985cells} for $w\in\affw$. That $a(w)=a(w\omega^i)$ follows from \cref{structure constant facts}, and from this the first claim follows in general. The second claim is \cite[Proposition 3.8(d)]{mcgerty2003cells}, noting that Definition 3.1 therein has a missing factor of $\frac{1}{2}$, and the correct formula is the one we give here. The third claim is \cite[Theorem 5.4]{lusztig1985cells} in the affine case, and the general case then follows from $a(w)=a(w\omega^i)$ and \cref{cells on the affine subgroup}. 
\end{proof}

We also introduce the objects $\gamma$, which will turn out to be the structure coefficients of the asymptotic algebra, and collect some of their basic properties.

\begin{definition}
    Write $\gamma_{u,v}^w$ for the coefficient of the $a(w)$-power term in $\scn_{u,v}^w$. Similarly, write $\gamma_{uRv}^{QwP}$ for the coefficient of the $a(w)-l(w_R)$-power term in $\scn_{uRv}^{QwP}$.
\end{definition}

\begin{lemma}\label{asymptotic structure constants}
    $\gamma_{uRv}^{QwP}=\gamma_{u,v}^w$.
\end{lemma}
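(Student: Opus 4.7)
The plan is to leverage the identity from \cref{schur algebra structure coefficients}, which gives
\begin{align*}
    p_R\scn_{uRv}^{QwP}=q^{\frac{l(w_R)}{2}}\scn_{u,v}^w,
\end{align*}
and then compare top-degree coefficients in $q^{\frac{1}{2}}$ on both sides. The key numerical inputs are: (i) $p_R=\sum_{w\in W_R}q^{l(w)}$ has top $q^{\frac{1}{2}}$-power equal to $(q^{\frac{1}{2}})^{2l(w_R)}$ with coefficient $1$, since $w_R$ is the unique longest element of $W_R$ by \cref{maximal length representative}; (ii) by \cref{a is finite} (2) we have the inequality $n_{uRv}^{QwP}\leq a(w)-l(w_R)$; and (iii) by definition, $\gamma_{u,v}^{w}$ is the coefficient of $(q^{\frac{1}{2}})^{a(w)}$ in $\scn_{u,v}^w$.

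Expanding the right-hand side, the coefficient of $(q^{\frac{1}{2}})^{a(w)+l(w_R)}$ in $q^{\frac{l(w_R)}{2}}\scn_{u,v}^w$ is exactly $\gamma_{u,v}^w$. For the left-hand side, I would observe that since $\scn_{uRv}^{QwP}$ has no terms in $q^{\frac{1}{2}}$-degree strictly greater than $a(w)-l(w_R)$ by input (ii), any contribution to the $(q^{\frac{1}{2}})^{a(w)+l(w_R)}$-coefficient of $p_R\scn_{uRv}^{QwP}$ must come from pairing the top term $(q^{\frac{1}{2}})^{2l(w_R)}$ of $p_R$ with a term of degree $a(w)-l(w_R)$ in $\scn_{uRv}^{QwP}$, as every other pairing has strictly smaller total degree. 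Hence this coefficient equals $\gamma_{uRv}^{QwP}$, and equating the two sides yields the desired identity.

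The step most likely to need care is the bookkeeping around the inequality $n_{uRv}^{QwP}\leq a(w)-l(w_R)$, since \cref{a is finite} (2) is phrased as a maximum over choices of $R,u,v$ rather than as a bound for each fixed triple; but that maximum formulation immediately implies the pointwise bound, which is all that is required. Once that observation is in place, the argument reduces to the polynomial-coefficient comparison sketched above, with no further subtleties.
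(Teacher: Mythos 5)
Your proof is correct and is essentially the full version of the paper's one-line justification (citing exactly \cref{schur algebra structure coefficients} and \cref{a is finite} (2)). The paper leaves the coefficient-comparison argument implicit; your write-up supplies it cleanly, including the observation that the pointwise degree bound $n_{uRv}^{QwP}\leq a(w)-l(w_R)$ follows from the maximum in \cref{a is finite} (2).
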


\begin{proof}
 This follows from \cref{schur algebra structure coefficients} together with \cref{a is finite} (2).
\end{proof}

\begin{lemma}\label{affine asymptotic structure}
Let $u',v'\in\affw$. Then $\gamma_{u'\omega^i ,v'\omega^j}^w$ is zero unless $w=w'\omega^{i+j}$ for some $w'\in\affw$, in which case $\gamma_{u'\omega^i ,v'\omega^j}^w=\gamma_{u',\omega^i v'\omega^{-i} }^{w'}$. Furthermore, $\gamma_{\omega^{i}u'\omega^{-i} ,\omega^{i}v'\omega^{-i}}^{\omega^{i}w'\omega^{-i}}=\gamma_{u', v'}^{w'}$.
\end{lemma}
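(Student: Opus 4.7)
The plan is to reduce both claims to the corresponding assertions about $\scn$ proven in \cref{structure constant facts}, using the fact that the $a$-function is invariant under conjugation and right multiplication by $\omega$, as recorded in \cref{a is finite} (3). Since $\gamma_{u,v}^w$ is by definition the coefficient of the $q^{a(w)/2}$ term of $\scn_{u,v}^w$, any equality of $\scn$ values combined with an equality of the corresponding $a$ values immediately yields an equality of $\gamma$ values.

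For the first claim, I would begin by invoking \cref{structure constant facts} to conclude $\scn_{u'\omega^i,v'\omega^j}^w = 0$ whenever $w$ is not of the form $w'\omega^{i+j}$ with $w'\in\affw$; the vanishing of $\gamma_{u'\omega^i,v'\omega^j}^w$ in this case is then automatic from the definition. In the remaining case $w = w'\omega^{i+j}$, the same proposition gives the identity $\scn_{u'\omega^i,v'\omega^j}^{w'\omega^{i+j}} = \scn_{u',\omega^i v'\omega^{-i}}^{w'}$ as elements of $\zrtq$. By \cref{a is finite} (3) we have $a(w'\omega^{i+j}) = a(w')$, so extracting the coefficient of $q^{a(w')/2}$ on both sides transfers the identity to the $\gamma$ coefficients, as required.

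The second claim is handled identically: \cref{structure constant facts} gives $\scn_{\omega^i u'\omega^{-i},\omega^i v'\omega^{-i}}^{\omega^i w'\omega^{-i}} = \scn_{u',v'}^{w'}$, and \cref{a is finite} (3) gives $a(\omega^i w'\omega^{-i}) = a(w')$, so reading off the leading coefficient on each side yields the desired equality of $\gamma$'s.

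The main obstacle is essentially none: both statements are bookkeeping once one observes that \cref{structure constant facts} was phrased for the full polynomials $\scn_{u,v}^w$ and hence applies termwise, and that \cref{a is finite} (3) guarantees that the degree at which we extract the leading coefficient is the same on both sides of each identity. The only thing that requires a moment of care is that the equation $\scn_{u'\omega^i,v'\omega^j}^{w'\omega^{i+j}} = \scn_{u',\omega^i v'\omega^{-i}}^{w'}$ holds as polynomials rather than merely up to a unit, which is exactly what is proven in \cref{structure constant facts}.
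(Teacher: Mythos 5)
Your proof is correct and follows exactly the same route as the paper: combine \cref{structure constant facts} for the $\scn$ identities with \cref{a is finite} (3) for the invariance of $a$, then extract the leading coefficient. You simply spell out the bookkeeping that the paper leaves implicit.
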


\begin{proof}
 The first part is \cref{structure constant facts} together with \cref{a is finite} (3). The second part is \cref{structure constant facts} again together with \cref{a is finite} (3).
\end{proof}

Using $a$, we now define the asymptotic algebra of a two-sided cell as a certain $\z$-subquotient of the cell.

\begin{definition}
     For $w\in\weyl$, write $\kla_{w}=q^{-\frac{a(w)}{2}}\kln_w$. Similarly, for $P,Q\subseteq\fingen$ and $w\in\prescript{Q}{}\weyl^P$, write $\kla_{QP}^{w}=q^{\frac{l(w_P)-a(w)}{2}}\kln_{QP}^w$.

Write $\zmq=\z\left[q^{-\frac{1}{2}}\right]$, the unital subalgebra of $\zrtq$ generated by $q^{-\frac{1}{2}}$.
     
     Let $\hecmq^{\lambda}$ and $\schmq^{\lambda}$ denote the $\zmq$-span of the $\kla_w$ and the $\kla_{QP}^w$ respectively.
\end{definition}

\begin{lemma}
    $\hecmq^{\lambda}$ and $\schmq^{\lambda}$ are free $\zmq$-subalgebras of $\hecrtq^{\lambda}$ and $\schrtq^{\lambda}$ respectively.
\end{lemma}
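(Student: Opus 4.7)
The plan is to check freeness and closure under multiplication. Freeness of $\hecmq^\lambda$ on $\{\kla_w : w \in \weyl, \ \kln_w \in \hecrtq^\lambda\}$ follows because $\{\kln_w\}$ is already $\zrtq$-linearly independent (being part of the basis of $\hecrtq$ from \cref{kl basis}), and $\kla_w = q^{-a(w)/2}\kln_w$ differs from it by the invertible scalar $q^{-a(w)/2} \in \zrtq$; the $\schmq^\lambda$ case is analogous via the $\kln_{QP}^w$ basis.

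For closure under multiplication in $\hecmq^\lambda$, the key computation is, in $\hecrtq$,
\begin{align*}
\kla_u \kla_v = q^{-\frac{a(u)+a(v)}{2}}\kln_u \kln_v = q^{-\frac{a(u)+a(v)}{2}}\sum_{w\in\weyl}\scn_{u,v}^w \kln_w.
\end{align*}
By \cref{cells are ideals}, $\hecrtq^{\geq\lambda}$ is a two-sided ideal containing $\kln_u$ and $\kln_v$, so only terms with $w$ in a two-sided cell $\geq \lambda$ in dominance survive; passing to the subquotient $\hecrtq^\lambda$ then kills everything but $w \sim_{LR} u$. For such $w$, \cref{a is finite}(3) gives $a(u) = a(v) = a(w)$, so substituting $\kln_w = q^{a(w)/2}\kla_w$ yields
\begin{align*}
\kla_u \kla_v = \sum_{w \sim_{LR} u} q^{-\frac{a(w)}{2}}\scn_{u,v}^w \kla_w
\end{align*}
in $\hecrtq^\lambda$. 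The coefficient $q^{-a(w)/2}\scn_{u,v}^w$ is a polynomial in $q^{-\frac{1}{2}}$ over $\z$ by the very definition of $a(w)$ as the top power of $q^{\frac{1}{2}}$ appearing in any $\scn_{u,v}^w$, hence lies in $\zmq$, establishing the claim.

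For $\schmq^\lambda$ the strategy is identical but uses the structure constants $\scn_{uRv}^{QwP}$ in place of $\scn_{u,v}^w$, the conversion $\kln_{QP}^w = q^{(a(w) - l(w_P))/2}\kla_{QP}^w$, and the bound $n_{uRv}^{QwP} \leq a(w) - l(w_R)$ coming from \cref{a is finite}(2). After the same ideal-then-quotient reduction, the coefficient of $\kla_{QP}^w$ in $\kla_{QR}^u \kla_{RP}^v$ simplifies to $q^{(l(w_R) - a(w))/2}\scn_{uRv}^{QwP}$, which lies in $\zmq$ by that bound. The main bookkeeping obstacle is tracking the several $a$- and length-shifts built into the definitions of $\kla_w$ and $\kla_{QP}^w$; the key conceptual input throughout is constancy of $a$ on two-sided cells, which is what lets a single $a(w)$ simultaneously rescale the factors $u$, $v$ and the output $w$.
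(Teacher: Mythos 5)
Your proposal is correct and follows essentially the same approach as the paper: both arguments reduce freeness to the $\zrtq$-linear independence of the Kazhdan--Lusztig basis and then prove closure by rewriting the structure constants of the $\kla$'s in terms of $\scn_{u,v}^w$ (respectively $\scn_{uRv}^{QwP}$), using the constancy of $a$ on a two-sided cell (\cref{a is finite}(3)) to collapse the three $a$-shifts into one, and then invoking the definition of $a$ (respectively the bound from \cref{a is finite}(2)) to conclude that the coefficient lies in $\zmq$. You are somewhat more explicit than the paper about two bookkeeping points that the paper treats as implicit: (i) the reason one may restrict the sum to $w$ with $\klu_w\in\hecany^\lambda$, namely that multiplication is taken in the subquotient $\hecrtq^{\geq\lambda}/\hecrtq^{>\lambda}$, and (ii) the freeness claim itself. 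Your computed coefficient for the Schur case, $q^{\frac{l(w_R)-a(w)}{2}}\scn_{uRv}^{QwP}$, is the correct one; the exponent in the paper's displayed coefficient appears to contain a minor typographical slip, and your version is the one consistent with the definitions of $\kla_{QP}^w$ and $\scn_{uRv}^{QwP}$ and with the bound in \cref{a is finite}(2).
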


\begin{proof}
 Let $a=a(w)$ for any $\klu_w\in\hecany^{\lambda}$, noting that by \cref{a is finite} (3) it is independent of choice of $w$. Then, for $\klu_u,\klu_v,\klu_w\in\hecany^{\lambda}$, the coefficient of $\kla_w$ in the product $\kla_u\kla_v$ is $q^{-\frac{a}{2}}\scn_{u,v}^w$, which by definition of $a$ must lie in $\zmq$. Similarly, the coefficient of $\kla_{QP}^w$ in the product $\kla_{QR}^u\kla_{RP}^v$ is $q^{l(w_R)-\frac{a}{2}}\scn_{uRv}^{QwP}$, which by \cref{a is finite} (2) must lie in $\zmq$.
\end{proof}

\begin{definition}
     Let $\hecinf^{\lambda}=\hecmq^{\lambda}/q^{-\frac{1}{2}}\hecmq^{\lambda}$ and $\schinf^{\lambda}=
     \schmq^{\lambda}/q^{-\frac{1}{2}}\schmq^{\lambda}$, the asymptotic algebras of $\hecany$ and $\schany$ respectively.
\end{definition}

\begin{lemma}
      $\hecinf^{\lambda}$ and $\schinf^{\lambda}$) are free $\z$-algebras with bases $\left\{t_w\middle|\klu_w\in\hecany^{\lambda}\right\}$ and $\left\{t_{QP}^w\middle|\klu_{QP}^w\in\schany^{\lambda}\right\}$ respectively, where $t_w$ is the image of $\kla_w$ and $t_{QP}^w$ is the image of $\kla_{QP}^w$. The products are $t_ut_v=\sum_{w\in\weyl}\gamma_{u,v}^w t_w$ and $t_{QR}^ut_{RP}^v=\sum_{w\in\weyl}\gamma_{uRv}^{QwP} t_{QP}^w$.
      
      The map $t_w\mapsto t_{\emptyset\emptyset}^w$ is an inclusion of $\z$-algebras $\hecinf\rightarrow\schinf$.
\end{lemma}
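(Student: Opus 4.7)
The plan is to extract the algebra structure and bases directly from the preceding lemma, using the definitions of $\gamma$ and $a$ to identify the surviving terms modulo $q^{-\frac{1}{2}}$.

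First I would establish freeness and the bases. By the lemma immediately above, $\hecmq^{\lambda}$ is a free $\zmq$-subalgebra, and it is free on $\left\{\kla_w\middle|\klu_w\in\hecany^{\lambda}\right\}$ since this set is obtained from the $\zrtq$-basis $\left\{\kln_w\middle|\klu_w\in\hecany^{\lambda}\right\}$ by rescaling each element by a unit $q^{-\frac{a(w)}{2}}\in\zrtq$. Quotienting by $q^{-\frac{1}{2}}\hecmq^{\lambda}$ thus yields a free $\zmq/q^{-\frac{1}{2}}\zmq=\z$-module with basis given by the images $t_w$, and the same argument works verbatim for $\schmq^{\lambda}$ with basis $\left\{t_{QP}^{w}\right\}$.

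Next I would compute the structure constants. For the Hecke algebra, the previous lemma's proof shows that the coefficient of $\kla_w$ in $\kla_u\kla_v$ is $q^{-\frac{a}{2}}\scn_{u,v}^w$ where $a=a(w)$ (well-defined on the two-sided cell by \cref{a is finite} (3)). By the definition of $\gamma_{u,v}^w$ as the coefficient of the $q^{\frac{a(w)}{2}}$-term of $\scn_{u,v}^w$, and by the definition of $a(w)$ as the maximal such exponent, we have
\begin{align*}
q^{-\frac{a(w)}{2}}\scn_{u,v}^w\equiv\gamma_{u,v}^w\pmod{q^{-\frac{1}{2}}\zmq},
\end{align*}
which gives the claimed product $t_ut_v=\sum_w\gamma_{u,v}^w t_w$. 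For the Schur algebra the coefficient of $\kla_{QP}^w$ in $\kla_{QR}^u\kla_{RP}^v$ is $q^{l(w_R)-\frac{a}{2}}\scn_{uRv}^{QwP}$, and by \cref{a is finite} (2) and the definition of $\gamma_{uRv}^{QwP}$ this reduces modulo $q^{-\frac{1}{2}}$ to $\gamma_{uRv}^{QwP}$, which equals $\gamma_{u,v}^w$ by \cref{asymptotic structure constants}.

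Finally, for the inclusion $\hecinf^{\lambda}\hookrightarrow\schinf^{\lambda}$: since $\weyl_{\emptyset}=\{1\}$ we have $w_{\emptyset}=1$ and $l(w_{\emptyset})=0$, so $\kla_{\emptyset\emptyset}^w=q^{-\frac{a(w)}{2}}\kln_{\emptyset\emptyset}^w$, which is the image of $\kla_w$ under the inclusion $\hecrtq\hookrightarrow\schrtq$ of \cref{schur algebra basis} (2). Thus the inclusion sends $\hecmq^{\lambda}$ into $\schmq^{\lambda}$ and $q^{-\frac{1}{2}}\hecmq^{\lambda}$ into $q^{-\frac{1}{2}}\schmq^{\lambda}$, descending to the required $\z$-algebra homomorphism $t_w\mapsto t_{\emptyset\emptyset}^w$. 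It is injective because it sends a basis into a basis, and it is a ring map because the product formulas match under $P=Q=R=\emptyset$.

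There is no genuine obstacle here: the key inputs (that the top-degree coefficient of the Hecke and Schur structure constants is the same and equals $\gamma_{u,v}^w$) have already been isolated as \cref{a is finite} (2) and \cref{asymptotic structure constants}. The only thing to watch is that $a(w)$ is constant on the two-sided cell, which is \cref{a is finite} (3), so the global rescaling by $q^{-\frac{a}{2}}$ is well-defined.
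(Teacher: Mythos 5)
Your proof is correct and follows essentially the same route as the paper: both extract freeness from the construction (your rescaling-by-a-unit remark just unpacks this), both obtain the product formulas by reducing the structure constants of the previous lemma modulo $q^{-\frac{1}{2}}$ using the definition of $\gamma$ together with \cref{a is finite} (2) for the Schur case, and both obtain the inclusion from \cref{asymptotic structure constants}. Your version spells out the supporting details (units, well-definedness on cosets, injectivity via bases) a bit more explicitly, but there is no genuine difference in strategy.
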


\begin{proof}
 That they are free $\z$-algebras with the given basis is immediate by construction. Write $a=a(w)$ for any $\klu_w\in\hecany^{\lambda}$. Then by \cref{a is finite} (2) we have
 \begin{align*}
     q^{-\frac{a}{2}}\scn_{u,v}^w\cong \gamma_{u,v}^w  \ \left(\text{mod} \ q^{-\frac{1}{2}}\zmq\right) \\ q^{l(w_R)-\frac{a}{2}}\scn_{uRv}^{QwP} \cong \gamma_{uRv}^{QwP}  \ \left(\text{mod} \ q^{-\frac{1}{2}}\zmq\right)
 \end{align*}
 which gives the stated product formulae. That the map $t_w\mapsto t_{\emptyset\emptyset}^w$ is an inclusion follows from \cref{asymptotic structure constants}.
\end{proof}

We will need the following key property.

\begin{prop}\label{asymptotic algebra sum of cells}
    If $\gamma_{u,v}^w\neq 0$ then $v\sim_{L}w\sim_{R}u\sim_{L}v^{-1}$. 
\end{prop}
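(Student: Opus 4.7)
The plan is to deduce the three cell relations from $\gamma_{u,v}^w \neq 0$ in two stages: first extract one-sided cell inequalities, then upgrade them to equivalences using the $a$-function. For the first stage, note that $\gamma_{u,v}^w \neq 0$ implies $\scn_{u,v}^w \neq 0$, so $\kln_w$ appears with nonzero coefficient in $\kln_u \kln_v$. Since this product lies in the based left ideal generated by $\kln_v$ and in the based right ideal generated by $\kln_u$, we immediately obtain $w \leq_L v$ and $w \leq_R u$ directly from the definition of the cell preorders.

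For the second stage, I would invoke two classical pieces of Lusztig's theory of the asymptotic algebra: the cyclic symmetry $\gamma_{u,v}^w = \gamma_{v, w^{-1}}^{u^{-1}}$ and the symmetric bound $n_{u,v}^w \leq \min\{a(u), a(v), a(w)\}$. With the bound in hand, $\gamma_{u,v}^w \neq 0$ forces $n_{u,v}^w = a(w)$ and hence $a(w) \leq a(u)$ and $a(w) \leq a(v)$. Combining with \cref{a is finite}(3) applied to the Step 1 relations $w \leq_{LR} v$ and $w \leq_{LR} u$ yields $a(w) = a(u) = a(v)$, so $w \sim_{LR} u \sim_{LR} v$. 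A further property of Lusztig's, saying that $w \leq_L v$ together with $w \sim_{LR} v$ implies $w \sim_L v$ (and similarly on the right), then yields $w \sim_L v$ and $w \sim_R u$. For the final relation $u \sim_L v^{-1}$, apply the cyclic symmetry to obtain $\gamma_{v, w^{-1}}^{u^{-1}} \neq 0$ and rerun the argument just given: this produces $u^{-1} \sim_R v$, which via the identity $x \sim_L y \iff x^{-1} \sim_R y^{-1}$ (coming from \cref{cellular involution}) is equivalent to $u \sim_L v^{-1}$.

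The main obstacle is supplying the three inputs used above: the cyclic symmetry of $\gamma$, the symmetric bound $n_{u,v}^w \leq \min\{a(u), a(v), a(w)\}$, and the upgrade from $\sim_{LR}$ to $\sim_L$ and $\sim_R$ in the presence of a one-sided inequality. These are each part of Lusztig's P-conjectures (P7, P4 in symmetric form, and P9/P11 respectively), and for affine type $\tilde A$ they are established in \cite{lusztig1987cells}, \cite{shi1986kazhdan}, and \cite{xi2002based}. I would cite these rather than re-derive them; the proposition is essentially a packaging of these deep results in a form convenient for the cellular structure constructed in the following section.
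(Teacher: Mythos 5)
Your approach is genuinely different from the paper's, and the logic of the derivation is sound: the proposition is precisely Lusztig's property P8, and the paper's proof simply cites it directly (as \cite[Theorem 1.9(a)]{lusztig1987cells}) for elements of $\affw$ and then extends to $\weyl$. You instead reconstruct P8 from scratch using the more primitive ingredients P4, P7 and P9/P11, which is the standard way P8 is proved within Lusztig's framework but is longer than citing the result. Your Step 1 (extracting $w\leq_L v$ and $w\leq_R u$ from $\scn_{u,v}^w\neq 0$ via the definition of based ideals) is clean, elementary, and valid for all of $\weyl$; it is nice that it avoids any deep input.

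The gap is in Step 2: none of the three properties you invoke (the cyclic symmetry $\gamma_{u,v}^w=\gamma_{v,w^{-1}}^{u^{-1}}$, the bound $n_{u,v}^w\leq\min\{a(u),a(v),a(w)\}$, and the upgrade from $\sim_{LR}$ plus a one-sided $\leq$ to a one-sided $\sim$) are stated in the references for the \emph{extended} affine Weyl group $\weyl$; they are established for Coxeter groups, in particular for $\affw$, but $\weyl$ is not a Coxeter group. Each would need to be transferred to $\weyl$ by a computation of the type carried out in \cref{structure constant facts} and \cref{affine asymptotic structure}, or else one proves the proposition for $\affw$ first and then extends the conclusion, which is precisely what the paper does (writing $u=u'\omega^i$, $v=v'\omega^j$, $w=w'\omega^{i+j}$, reducing to $\gamma_{u',\omega^iv'\omega^{-i}}^{w'}$ via \cref{affine asymptotic structure}, and invoking \cref{cells on the affine subgroup} to translate the cell equivalences back). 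As written, your argument only establishes the claim for $u,v,w\in\affw$; you need to add the extension step, and it is arguably more economical to apply it once to the conclusion (as the paper does) than separately to each of the three inputs you use.
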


\begin{proof}
\cite[Theorem 1.9(a)]{lusztig1987cells} gives the case where $u,v,w\in\affw$. In the general case, if $\gamma_{u'\omega^i ,v'\omega^j}^w\neq0$ then by \cref{affine asymptotic structure} we have $\gamma_{u',\omega^i v'\omega^{-i} }^{w'}\neq0$ for $w=w'\omega^{i+j} $, and so by the affine case and \cref{cells on the affine subgroup} we have $v'\omega^j\sim_L w'\omega^{i+j} \sim_R u'\omega^i \sim_L \omega^{-j}v'^{-1}$.
\end{proof}

We also record the following easy consequence.

\begin{cor}
$\hecinf=\bigoplus_{\lambda\vdash n}\hecinf^{\lambda}$ is the free $\z$-algebra with basis $\left\{t_w\middle|w\in\weyl\right\}$ and product $t_ut_v=\sum_{w\in\weyl}\gamma_{u,v}^w t_w$, and $\schinf=\bigoplus_{\lambda\vdash n}\schinf^{\lambda}$ is the free $\z$-algebra with basis $\left\{t_{QP}^w\middle|P,Q\subseteq\fingen,w\in\prescript{Q}{}\weyl^P\right\}$ and product $t_{QR}^ut_{RP}^v=\sum_{w\in\weyl}\gamma_{uRv}^{QwP} t_{QP}^w$.
\end{cor}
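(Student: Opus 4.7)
The plan is to recognise that essentially all the content has already been established: by definition $\hecinf=\bigoplus_{\lambda\vdash n}\hecinf^{\lambda}$ is a $\z$-module direct sum of free $\z$-algebras, so it automatically inherits a $\z$-algebra structure with the direct sum basis $\left\{t_w\middle|w\in\weyl\right\}$. The only thing to check is that the single product formula $t_ut_v=\sum_{w\in\weyl}\gamma_{u,v}^w t_w$ correctly describes this algebra structure, and the analogous claim for $\schinf$.

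First I would note that within each summand $\hecinf^{\lambda}$, the product of two basis elements $t_u,t_v$ with $\klu_u,\klu_v\in\hecany^{\lambda}$ is exactly $\sum_{w\in\weyl}\gamma_{u,v}^w t_w$ by the preceding lemma (the sum being supported on $\klu_w\in\hecany^{\lambda}$, but harmlessly indexed over all of $\weyl$). So the only new content is showing the cross-cell products vanish in a way consistent with the stated formula: if $\klu_u\in\hecany^{\lambda}$ and $\klu_v\in\hecany^{\mu}$ with $\lambda\neq\mu$, then the direct sum structure forces $t_ut_v=0$, and I must check this agrees with $\sum_{w\in\weyl}\gamma_{u,v}^w t_w=0$.

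This is where \cref{asymptotic algebra sum of cells} does all the work: if $\gamma_{u,v}^w\neq 0$ then $v\sim_L w\sim_R u$, so in particular $u\sim_{LR}v$, meaning $u$ and $v$ lie in the same two-sided cell, contradicting $\lambda\neq\mu$. Hence $\gamma_{u,v}^w=0$ for every $w\in\weyl$, the formula gives $0$, and consistency holds.

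For the Schur algebra case I would argue identically after reducing to the Hecke case. By \cref{asymptotic structure constants} we have $\gamma_{uRv}^{QwP}=\gamma_{u,v}^w$, and by \cref{hecke and shcur cell relations} together with \cref{index for two sided cells}, $\klu_{QR}^u$ and $\klu_{RP}^v$ belong to different two-sided cells of $\schany$ precisely when $u$ and $v$ belong to different two-sided cells of $\hecany$. Thus, exactly as above, the cross-cell structure coefficients vanish, and the single formula $t_{QR}^ut_{RP}^v=\sum_{w\in\weyl}\gamma_{uRv}^{QwP}t_{QP}^w$ describes the direct sum algebra structure. There is no serious obstacle here; the corollary is essentially a bookkeeping restatement of the preceding lemma together with the cell-compatibility of $\gamma$.
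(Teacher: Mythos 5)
Your proposal is correct and takes essentially the same route as the paper: both rely on \cref{asymptotic algebra sum of cells} to kill cross-cell structure constants and on \cref{asymptotic structure constants} to transfer the vanishing to the Schur side. You simply spell out a bit more of the bookkeeping, but the key ideas and lemma dependencies match.
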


\begin{proof}
 By \cref{asymptotic algebra sum of cells}, if $\klu_u,\klu_v,\klu_w$ are not all in the same two-sided cell, then $\gamma_{u,v}^w=0$. Together with \cref{asymptotic structure constants}, this says that $t_{QR}^ut_{RP}^v=0$ unless $\klu_{QR}^u,\klu_{RP}^v\in\schany^{\lambda}$ for some $\lambda$, in which case the terms in the sum $\sum_{w\in\weyl}\gamma_{uRv}^{QwP} t_{QP}^w$ are zero unless $\klu_{QP}^w\cong\schany^{\lambda}$ also.
\end{proof}

The asymptotic algebra also possesses an involution, which will be a part of the affine cellular structure.

\begin{definition}
     Write $\hecia=\hecinf\otimes_{\z}\zany$ and $\schia=\schinf\otimes_{\z}\zany$.
     
     Let $\sigma(t_{QP}^w)=t_{PQ}^{w^{-1}}$.
     
\end{definition}

\begin{lemma}
$\sigma$ is an an algebra anti-involution on $\schia$, which restricts to an algebra anti-involution of $\hecia$.
\end{lemma}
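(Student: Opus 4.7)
The plan is to verify the three defining properties of an anti-involution: $\zany$-linearity, $\sigma^2 = \mathrm{id}$, and anti-multiplicativity, with the first two being immediate from the definition on the basis and the fact that $(w^{-1})^{-1}=w$. The real content lies in anti-multiplicativity, which reduces via the product formula $t_{QR}^u t_{RP}^v = \sum_w \gamma_{uRv}^{QwP}\, t_{QP}^w$ to showing the identity
\begin{equation*}
\gamma_{uRv}^{QwP} = \gamma_{v^{-1}Ru^{-1}}^{Pw^{-1}Q}
\end{equation*}
of asymptotic structure constants. By \cref{asymptotic structure constants}, both sides equal their Hecke counterparts $\gamma_{u,v}^w$ and $\gamma_{v^{-1},u^{-1}}^{w^{-1}}$ respectively, so the task collapses to verifying $\gamma_{u,v}^w = \gamma_{v^{-1},u^{-1}}^{w^{-1}}$ in $\hecinf$.

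To establish this, I would apply the cellular anti-involution $\iota$ from \cref{cellular involution} to the equation $\kln_u \kln_v = \sum_w \scn_{u,v}^w \kln_w$. Since $\iota$ is $\zany$-linear and $l(w)=l(w^{-1})$ (which follows from \cref{weyl group affine decomposition}~(3) together with the fact that reversing a reduced expression in $\affw$ yields a reduced expression for the inverse), we have $\iota(\kln_w)=\kln_{w^{-1}}$. Applying $\iota$ and using that it reverses multiplication gives $\kln_{v^{-1}}\kln_{u^{-1}} = \sum_w \scn_{u,v}^w \kln_{w^{-1}}$, hence $\scn_{u,v}^w = \scn_{v^{-1},u^{-1}}^{w^{-1}}$. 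Taking the highest-$q^{1/2}$-power coefficient on both sides shows in particular that $a(w)=a(w^{-1})$, and then extracting the leading coefficient yields the desired equality of the $\gamma$'s.

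The restriction claim to $\hecia$ is then immediate: under the inclusion $t_w \mapsto t_{\emptyset\emptyset}^w$, the map $\sigma$ sends $t_w$ to $t_{w^{-1}}$, so it preserves the Hecke subalgebra and inherits the anti-involution property. The one step that requires a little care (and is the closest thing to an obstacle) is the symmetry $\scn_{u,v}^w = \scn_{v^{-1},u^{-1}}^{w^{-1}}$ together with the fact that $\iota$ genuinely fixes $\kln_w$ up to inversion of the index, which depends on the non-obvious identity $l(w)=l(w^{-1})$; once this is in hand, the rest is bookkeeping.
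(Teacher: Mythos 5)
Your proposal is correct and follows essentially the same route as the paper: both apply the cellular anti-involution $\iota$ from \cref{cellular involution} to the product expansion to obtain the symmetry of structure constants, then pass to leading coefficients using $a(w)=a(w^{-1})$. The only small divergence is in how that equality is obtained: the paper deduces it from $w\sim_{LR}w^{-1}$ (via \cref{cells respect involution}) together with \cref{a is finite}~(3), whereas you extract it directly from $\scn_{u,v}^w=\scn_{v^{-1},u^{-1}}^{w^{-1}}$ by taking the maximum over $u,v$ in the definition of $a$ --- a slightly more self-contained but equally valid sub-argument.
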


\begin{proof}
 By \cref{cellular involution}, we have that $\scu_{uRv}^{QwP}=\scu_{v^{-1}Ru^{-1}}^{Qw^{-1}P}$. Furthermore, by \cref{cells respect involution}, we have $w\sim_{LR}w^{-1}$, and so by \cref{a is finite} (3) we have $a(w)=a(w^{-1})$. Hence $\ascu_{uRv}^{QwP}=\ascu_{v^{-1}Ru^{-1}}^{Qw^{-1}P}$.
\end{proof}

We can also define asymptotic versions of left and right cells. In particular, we need the intersection of our fixed choice of left and right cells.

\begin{definition}

     If $\Gamma$ and $\Psi$ are left cells of $\hecany$ and $\schany$ respectively, write $\Gamma_{\infty}$ and $\Psi_{\infty}$ for the $\zany$-span of $\left\{t_{w}\middle|\klu_w\in\Gamma\right\}$ and $\left\{t_{QP}^{w}\middle|\klu_{QP}^w\in\Psi\right\}$ respectively.
     
     Write $\tcany=\Gamma_{\infty}^{\lambda}\cap\sigma\left(\Gamma_{\infty}^{\lambda}\right)=\Psi_{\infty}^{\lambda}\cap\sigma\left(\Psi_{\infty}^{\lambda}\right)$.
\end{definition}

\begin{lemma}
    For any $\Gamma$ and $\Psi$, the spaces $\Gamma_{\infty}$ and $\Psi_{\infty}$ are left ideals in $\hecia$ and $\schia$ respectively. $\tcany$ is a subalgebra of $\hecia$. Furthermore, $\sigma$ restricts to an algebra anti-involution on $\tcany$.
\end{lemma}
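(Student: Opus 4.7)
The plan is to derive all three assertions from the explicit product formulae in $\hecia$ and $\schia$ together with \cref{asymptotic algebra sum of cells}. First I would show $\Gamma_{\infty}$ is a left ideal: given $t_u \in \hecia$ and $t_v$ with $\klu_v \in \Gamma$, the product $t_u t_v = \sum_w \gamma_{u,v}^w t_w$ contributes a nonzero term only when $\gamma_{u,v}^w \neq 0$, which by \cref{asymptotic algebra sum of cells} forces $v \sim_L w$; thus $\klu_w \in \Gamma$ and $t_w \in \Gamma_{\infty}$. The argument for $\Psi_{\infty}$ reduces to the Hecke case: the product $t_{Q'R}^u \, t_{QP}^w$ vanishes unless $R = Q$, and when $R = Q$ it equals $\sum_v \gamma_{u,w}^v \, t_{Q'P}^v$ by \cref{asymptotic structure constants}. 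The subscript $P$ is preserved and any nonzero summand has $v \sim_L w$, so by \cref{hecke and shcur cell relations} (1) each nonzero $t_{Q'P}^v$ lies in the same left cell $\Psi$.

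Next, since $\sigma$ is an algebra anti-involution on $\hecia$, the image $\sigma(\Gamma_{\infty}^{\lambda})$ of the left ideal $\Gamma_{\infty}^{\lambda}$ is a right ideal. The intersection of a left ideal $L$ and a right ideal $R$ is always closed under multiplication: for $x,y\in L\cap R$ one has $xy\in L$ (since $y\in L$) and $xy\in R$ (since $x\in R$), so $\tcany$ is a subalgebra of $\hecia$. For the equality $\Gamma_{\infty}^{\lambda}\cap\sigma(\Gamma_{\infty}^{\lambda})=\Psi_{\infty}^{\lambda}\cap\sigma(\Psi_{\infty}^{\lambda})$ implicit in the definition of $\tcany$, I would unpack \cref{hecke and shcur cell relations}: by part (1), $\Psi^{\lambda}$ has basis $\klu_{Q\emptyset}^w$ with $\klu_w \in \Gamma^{\lambda}$, and by part (2), $\sigma(\Psi^{\lambda})=\iota(\Psi^{\lambda})$ has basis $\klu_{\emptyset P}^w$ with $\klu_w\in\iota(\Gamma^{\lambda})$. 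The intersection in $\schia$ therefore forces $Q = P = \emptyset$ and is identified with the intersection in $\hecia$ under the algebra inclusion $t_w \mapsto t_{\emptyset\emptyset}^w$.

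Finally, using $\sigma^2=\mathrm{id}$, one computes $\sigma(\tcany)=\sigma(\Gamma_{\infty}^{\lambda})\cap\Gamma_{\infty}^{\lambda}=\tcany$, so $\sigma$ restricts to $\tcany$; the restriction inherits the anti-involution property from $\sigma$ on $\hecia$. I do not anticipate a serious obstacle, since the substantive content is packaged into \cref{asymptotic algebra sum of cells} and \cref{asymptotic structure constants}; the rest is routine ideal-theoretic bookkeeping, the only mildly delicate point being the careful identification of the Schur and Hecke intersections via \cref{hecke and shcur cell relations}.
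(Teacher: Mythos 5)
Your proof is correct and follows essentially the same approach as the paper's: both derive the left-ideal claims from the product formulae via \cref{asymptotic algebra sum of cells} and \cref{asymptotic structure constants}, then observe that $\tcany$, as the intersection of a left ideal with a right ideal, is multiplicatively closed, and that $\sigma$ preserves this intersection. The only difference is that you add a short justification for the identification $\Gamma_{\infty}^{\lambda}\cap\sigma(\Gamma_{\infty}^{\lambda})=\Psi_{\infty}^{\lambda}\cap\sigma(\Psi_{\infty}^{\lambda})$, which the paper leaves implicit in the definition of $\tcany$; that is a reasonable thing to check but does not change the substance of the argument.
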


\begin{proof}
 If $\ascu_{u,v}^w\neq 0$ then by \cref{asymptotic algebra sum of cells} we must have $v\sim_L w$. This says that $\Gamma_{\infty}$ is a left ideal. Furthermore, combining this with \cref{asymptotic structure constants} gives that if $\ascu_{uRv}^{QwP}\neq 0$ then $v\sim_L w$. Thus by \cref{hecke and shcur cell relations} we get that $\Psi_{\infty}$ is a left ideal. The second claim then follows as $\tcany$ is the intersection of a right and a left ideal. The final claim is immediate by definition of $\tcany$ and $\sigma$.
\end{proof}

We are now ready to recall the explicit description of $\hecia$ and $\schia$ as sums of matrix algebras over polynomial rings.

\begin{definition}
     For $m\in\z_{\geq0}$, let
     \begin{align*}
         \z_{\domin}^m=\left\{(a_1,\dots,a_m)\in\z^m\middle|a_j\geq a_{j+1},\ 1\leq j\leq m\right\}.
     \end{align*}
     
     For $\lambda\vdash n$, fix $r_1<\dots r_p$ such that, for all $r$, we have that $\lambda_{r}=\lambda_{r_i}$ for exactly one $r_i$, and such that $r_i$ is maximal among the $r$ with $\lambda_r=\lambda_{r_i}$. Set $m_i=\lambda_{r_i}-\lambda_{r_{i+1}}$, taking $r_{p+1}=0$, and write $\domin\left(\lambda\right)=\prod_{i}\z_{\domin}^{m_i}$.
     
    Write $y_{ij}$ for the element of $\domin(\lambda)$ with $(i,j')$-component $1$ if $j'\leq j$ and all other components $0$.
\end{definition}

\begin{remark}
      Thus $\domin\left(\lambda\right)$ is generated under addition by
      
     \begin{align*}
         \left\{y_{ij}\middle|1\leq i\leq p, 1\leq j\leq m_i\right\}\cup\left\{-y_{im_i}\middle|1\leq i\leq p\right\}. 
     \end{align*}
\end{remark}

\begin{prop}\label{normalised asymptotic cell}
    There is a bijection $\domin\left(\lambda\right)\rightarrow \left\{w\in\weyl\middle| t_{w}\in \tcany\right\}$, $x\mapsto w(x)$, such that the product on $\tcany$ is given by $t_{w(x)}t_{w(y_{ij})}=\sum_{\tau+x\in\domin(\lambda)}t_{\tau+x}$, where $\tau$ runs over all tuples (not necessarily in $\domin(\lambda)$) that have exactly $j$-many $(i,j')$-entries which are $1$ and all other entries $0$.
    
    Furthermore, $\tcany=\zany\left[t_{w(y_{ij})},t_{w(y_{im_i})}^{-1}\middle|1\leq i\leq p,1\leq j\leq m_i\right]$.
    
    The algebra $\schia^{\lambda}$ is isomorphic to the $m_{\lambda}$-by-$m_{\lambda}$ matrix algebra over $\tcany$, with rows and columns indexed by the left cells in $\schany^{\lambda}$, via the map sending $\asn_{QP}^w\in {\Psi\cap\iota\left(\Phi\right)}$ to the matrix which is nonzero only in row $\Phi$ and column $\Psi$, and whose value in that column is $\asn_{w'}$, where $\klu_{w'}=q^{m}\psi_{\Psi\cap\iota\left(\Phi\right)}\left(\klu_{QP}^{w}\right)$ for some $m\in\frac{1}{2}\z$.
\end{prop}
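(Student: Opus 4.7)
The plan is to pin down the structure of $\tcany$ first, and then bootstrap to the matrix description of $\schia^{\lambda}$.

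For the bijection $\domin(\lambda) \to \{w \in \weyl \mid t_w \in \tcany\}$ and the accompanying product formula, the approach is to reduce to the affine Hecke algebra using \cref{cells on the affine subgroup} and \cref{affine asymptotic structure}. Together these identify $\tcany$ with the $\zany$-extension of the analogous subring of the affine asymptotic algebra, namely the intersection of a fixed left cell with its image under $\sigma$. For $\affhecany$ in type $\tilde{A}_{n-1}$, the explicit determination of this ring---both the indexing by $\domin(\lambda)$ and the product formula---is the main result of \cite{xi2002based}. The extended affine case then follows by twisting by powers of $\omega$, using the $\omega^n$-centrality recorded in \cref{omega to the n}.

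For the polynomial-ring presentation $\tcany = \zany[t_{w(y_{ij})}, t_{w(y_{im_i})}^{-1} \mid 1 \leq i \leq p, 1 \leq j \leq m_i]$, I would observe that $\{y_{ij}\} \cup \{-y_{im_i}\}$ generates $\domin(\lambda)$ under addition, and then apply the product formula inductively. A single multiplication $t_{w(x)} \cdot t_{w(y_{ij})}$ yields $t_{w(x + y_{ij})}$ plus a sum of $t_{w(x')}$ for $x'$ strictly below $x + y_{ij}$ in a natural partial order on $\domin(\lambda)$; inverting this triangular relation expresses every $t_{w(x)}$ as a $\zany$-polynomial in the listed generators, with the inverse $t_{w(y_{im_i})}^{-1}$ identified with $t_{w(-y_{im_i})}$ via the unit relation $t_{w(0)} = 1$.

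For the matrix algebra description of $\schia^{\lambda}$, I would use the decomposition $\schia^{\lambda} = \bigoplus_{\Psi, \Phi} \Psi_{\infty} \cap \sigma(\Phi_{\infty})$ coming from \cref{hecke and shcur cell relations}. That $(\Psi_{\infty} \cap \sigma(\Phi_{\infty})) \cdot (\Psi'_{\infty} \cap \sigma(\Phi'_{\infty}))$ vanishes unless $\Phi = \Psi'$, and otherwise lies in $\Psi_{\infty} \cap \sigma(\Phi'_{\infty})$, follows from \cref{asymptotic algebra sum of cells} combined with \cref{hecke and shcur cell relations}---this is exactly the shape of matrix multiplication. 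Each block is then identified with $\tcany$ via the asymptotic descent of $\phi_{\Psi \cap \iota(\Phi)}$ from \cref{maps between cells}: the power-of-$q$ correction $q^m$ appearing in the statement compensates for the relative normalisations of $\kla_{QP}^w$, $\kln_{QP}^w$, and $\klu_{QP}^w$ so that the image lies in the integral asymptotic algebra.

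The main obstacle I anticipate is the multiplicative compatibility of these block isomorphisms---verifying that for $a \in \Psi_{\infty} \cap \sigma(\Phi_{\infty})$ and $b \in \Phi_{\infty} \cap \sigma(\Phi'_{\infty})$, the product $ab$ corresponds under $\phi$ to $\phi(a) \phi(b) \in \tcany$ with no residual scalar factor. This requires a careful bookkeeping of powers of $q^{\pm 1/2}$ across the three normalisations $\kla$, $\kln$, $\klu$, and is precisely what pins down the exponent $m \in \frac{1}{2}\z$ in the statement. The necessary ingredients are in hand: \cref{left cells are iso} makes $\phi_{\Psi}$ a homomorphism of left $\schany/\schany^{>\lambda}$-modules, the identity $\phi_{\iota(\Psi)} = \iota \phi_{\Psi} \iota$ handles the right action, and \cref{asymptotic structure constants} ensures that the asymptotic structure coefficients are insensitive to the intermediate parabolic data $R$.
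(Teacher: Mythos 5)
Your scheme is sound, but it diverges in interesting ways from the paper, which treats this proposition almost entirely by citation. For the bijection, product formula, and the matrix structure the paper simply quotes \cite[Theorems 5.2.6(b), 6.4.1, Proposition 8.1.3]{xi2002based} and \cite[Proposition 4.13]{mcgerty2003cells}; notably, Xi works with the \emph{extended} affine Weyl group from the outset, so your step of ``twisting by powers of $\omega$'' to pass from $\affw$ to $\weyl$ is unnecessary overhead --- the references already cover it. For the polynomial-ring presentation of $\tcany$ the paper takes a representation-theoretic shortcut: it identifies $\tcany$ with the (scalar-extended) representation ring of $\prod_i \mathrm{GL}_{m_i}(\mathbb{C})$ via \cite[Section 4.2(d,f)]{xi2002based} and then appeals to the classical fact that this ring is a Laurent-polynomial ring in the fundamental characters. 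Your alternative --- observe that $t_{w(x)} t_{w(y_{ij})}$ has top term $t_{w(x+y_{ij})}$ with lower-order terms triangular under dominance, invert the change of basis, and identify $t_{w(y_{im_i})}^{-1} = t_{w(-y_{im_i})}$ via $t_{w(y_{im_i})} t_{w(-y_{im_i})} = t_{w(0)} = 1$ --- is a correct, self-contained combinatorial re-derivation of the same Pieri-type structure, and buys you independence from the representation-ring identification.

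The one place where your proposal has a real soft spot is exactly the step you flag: multiplicative compatibility of the block isomorphisms. You cite \cref{left cells are iso}, $\phi_{\iota(\Psi)} = \iota\phi_{\Psi}\iota$, and \cref{asymptotic structure constants} as the ``ingredients in hand,'' but \cref{left cells are iso} is a statement about $\scn$-structure constants in the Hecke and Schur algebras, not about their leading coefficients $\ascn$ in the asymptotic quotient. Descending that compatibility to $\hecia^{\lambda}$ requires checking that the length corrections built into the maps of \cref{maps between cells} interact correctly with the $a$-function normalisation (via \cref{a is finite}(3), $a$ is constant on the two-sided cell, so the star operations shift $l(w)$ by $\pm 1$ while leaving $a(w)$ fixed), and that this is what the ambient $q^m$ in the statement is cancelling. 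This is genuinely the content of \cite[Proposition 4.13]{mcgerty2003cells}, and since the paper does not reproduce it you are not missing something the paper proves --- but if you wanted a self-contained argument you would have to carry out this bookkeeping rather than merely list its ingredients.
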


\begin{proof}
The first part is \cite[Theorem 5.2.6(b), Theorem 6.4.1, Proposition 8.1.3]{xi2002based}. Furthermore, by \cite[Section 4.2(d,f)]{xi2002based}, we have that $\tcany$ isomorphic to the extension of scalars to $\zany$ of the representation ring of $\prod_{i=1}^{p}\mathrm{GL}_{m_i}(\mathbb{C})$, which is in turn isomorphic to the claimed algebra via standard results: see for example \cite[Exercise 23.36(d)]{fulton1991representation}. The final claim is \cite[Proposition 4.13]{mcgerty2003cells}.
\end{proof}

The usual affine cellular structure over $\zrtq$ is built from a correspondence between the $\asn_w$ and the $\kln_w$. Hence, to get a structure over $\zpmq$, we seek a basis of $\hecirt$ and $\schirt$ analogous to $\klu_w$, as this is will turn out to play more nicely with $\hecpmq$ and $\schpmq$. The challenge is ensuring that this basis spans a $\zpmq$-subalgebra of $\tcrtq$. To do this, we need to understand the lengths of the $w(x)$ in terms of $x$.

For $k\geq 1$ and $1\leq l\leq\lambda_k$, write $e_{k,l}=l+\sum_{k'=1}^{k-1}\lambda_{k'}$.

\begin{prop}
    $w_{\lambda}(e_{k,l})=e_{k,\lambda_k-l+1}$ for all $1\leq k\leq r_p$ and $1\leq l\leq\lambda_k$. Furthermore, $w_{\lambda}=w(0)$.
\end{prop}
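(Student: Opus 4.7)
The plan is to prove the two claims separately: the first by a direct calculation using the structure of $W_{P_\lambda}$, and the second by characterising $w(0)$ intrinsically as the element whose image in $\tcany$ is the multiplicative identity, then verifying that $w_\lambda$ meets this characterisation. For the first claim, I would use that $P_\lambda \subseteq \fingen$ is exactly the set of simple reflections $s_i$ whose index $i$ is not a partial sum $\sum_{j'=1}^{j} \lambda_{j'}$. Consequently, the generators of $W_{P_\lambda}$ decompose into $r_p$ disjoint, mutually commuting consecutive blocks, the $k$-th of which, $\{s_{e_{k,1}},\dots,s_{e_{k,\lambda_k-1}}\}$, generates the symmetric group on the set of integers $\{e_{k,1},\dots,e_{k,\lambda_k}\}$. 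Thus $W_{P_\lambda}$ is the direct product of these symmetric groups, and its longest element acts as the order-reversing involution on each block, giving $w_\lambda(e_{k,l}) = e_{k,\lambda_k-l+1}$.

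For the second claim, I would first argue that $t_{w(0)}$ is the multiplicative identity of the commutative algebra $\tcany$. Applying the product formula of \cref{normalised asymptotic cell} with $x = 0$, the only tuple $\tau$ with $j$ ones in row $i$ and zeros elsewhere that lies in $\domin(\lambda)$ is $\tau = y_{ij}$ itself, so $t_{w(0)} \cdot t_{w(y_{ij})} = t_{w(y_{ij})}$ for every generator; since $\tcany = \zany[t_{w(y_{ij})}, t_{w(y_{im_i})}^{-1}]$, this forces $t_{w(0)}$ to be the identity. Since the map $w \mapsto t_w$ on the relevant indexing set is injective, it then suffices to show that $t_{w_\lambda}$ is also the multiplicative identity of $\tcany$.

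The main obstacle is this final identification. The plan is to verify first that $\klu_{w_\lambda}$ lies in $\Gamma^\lambda \cap \iota(\Gamma^\lambda)$, so that $t_{w_\lambda} \in \tcany$, which is immediate from $w_\lambda = w_\lambda^{-1}$ combined with \cref{w lambda in h lambda}. Next, I would use the one-sided idempotence relations $T_s \klu_{w_\lambda} = q \klu_{w_\lambda} = \klu_{w_\lambda} T_s$ for all $s \in P_\lambda$ from \cref{kl basis and idempotents}, pass these through the normalisation $\kla_{w_\lambda} = q^{-a(w_\lambda)/2}\kln_{w_\lambda}$ into the asymptotic quotient, and deduce that $t_{w_\lambda}$ acts trivially on each generator $t_{w(y_{ij})}$ using the product formula from \cref{normalised asymptotic cell}. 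Alternatively, one can sidestep the explicit computation by invoking the identification of $\tcany$ with (the scalar extension to $\zany$ of) the representation ring of $\prod_i \mathrm{GL}_{m_i}(\mathbb{C})$ from \cite{xi2002based} and matching $w_\lambda$ with the trivial representation under this isomorphism.
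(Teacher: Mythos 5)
For the first claim, your approach via the decomposition of $W_{P_\lambda}$ into mutually commuting symmetric group blocks, with the longest element acting as the order-reversing involution on each block, is a valid and arguably cleaner alternative to the paper's proof. The paper instead reads off the descent set of $w_\lambda$ (the set of $k$ with $w_\lambda s_k < w_\lambda$ is exactly $P_\lambda$), converts that via \cite[Lemma 2.1.3(f)]{xi2002based} into the one-line-notation inequalities $w_\lambda(k) > w_\lambda(k+1)$ for $s_k\in P_\lambda$, and combines this with the fact that $w_\lambda$ permutes $\{1,\dots,n\}$ to pin down the formula. Both routes are fine.

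For the second claim there is a genuine gap, and in fact the paper does no work here at all: it simply cites \cite[Section 5.3.5]{xi2002based}, where $w(0)=w_\lambda$ is part of how the bijection is set up. Your proposed alternative has a circularity problem. To show that $t_{w_\lambda}$ acts trivially on the generators $t_{w(y_{ij})}$ you say you will invoke the product formula $t_{w(x)}t_{w(y_{ij})}=\sum_\tau t_{w(\tau+x)}$, but that formula only applies once you know which $x\in\domin(\lambda)$ has $w(x)=w_\lambda$ --- which is exactly the identity you are trying to establish. The Hecke-algebra relations $T_s\klu_{w_\lambda}=q\klu_{w_\lambda}=\klu_{w_\lambda}T_s$ do not by themselves yield any statement about $t_{w_\lambda}t_{w(y_{ij})}$. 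There is a non-circular version of your idea that does work: from $\klu_{w_\lambda}=x_{P_\lambda}$ one gets $\klu_{w_\lambda}^2=p_{P_\lambda}\klu_{w_\lambda}$; combined with the (separate, unproved-in-this-paper) fact that $a(w_\lambda)=l(w_\lambda)$, passing to the asymptotic quotient gives $t_{w_\lambda}^2=t_{w_\lambda}$, and since $\tcany$ is a domain (being isomorphic to a Laurent-type polynomial ring over $\zany$) its only nonzero idempotent is the identity, whence $t_{w_\lambda}=t_{w(0)}$ and $w_\lambda=w(0)$ by injectivity of the bijection. If you go this route you should state and justify $a(w_\lambda)=l(w_\lambda)$ explicitly, as nothing in the paper supplies it.
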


\begin{proof}
By definition, $w_{\lambda}\leq w_{\lambda}s_k$ for exactly those $k\neq\lambda_i$ for some $i$. Hence by \cite[Lemma 2.1.3(f)]{xi2002based}, we have $w(k)>w(k+1)$ if and only if $k\neq\lambda_i$ for some $i$. Furthermore, as $w_{\lambda}$ is a product pf $s_k$, it must permute the elements $\left\{1,\dots,n\right\}$. Thus it is given by the above formula. (Alternatively, see \cite[Section 5.5]{xi2002based}).

By \cite[Section 5.3.5]{xi2002based}, $w_{\lambda}=w(0)$.
\end{proof}

\begin{lemma}\label{lengths of the lattice}
Suppose $x=(x_{i',j'})\in\domin(\lambda)$ has $x_{i'j'}\geq0$ for all $i',j'$. Then

\begin{enumerate}
    \item $w=w(x)$ satisfies $w(e_{k,l})>w(e_{k,l+1})$ for all $1\leq l\leq \lambda_k-1$.
    \item $l(w(x))\in l(w_{\lambda})+\sum_{i',j'}x_{i'j'}(n+r_{i'})+2\z$.
\end{enumerate}

\end{lemma}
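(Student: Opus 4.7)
My plan is to prove both parts simultaneously by induction on the total weight $N = \sum_{i',j'} x_{i',j'}$. The base case $N = 0$ gives $w(0) = w_\lambda$ by the preceding proposition; part (1) is immediate from $w_\lambda(e_{k,l}) = e_{k, \lambda_k - l + 1}$ being strictly decreasing in $l$, and part (2) is trivial since both sides equal $l(w_\lambda)$.

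For the inductive step, I would pick any index $(i, j)$ with $x_{i,j} > 0$ (and with $j$ chosen so that $y_{ij} \leq x$ componentwise, which is possible whenever $N > 0$), write $x = x' + y_{ij}$, and apply the induction hypothesis to $x'$. The central intermediate task is to give a combinatorial description of $w(x)$ in terms of $w(x')$, specifically describing its action on the positions $e_{k,l}$. I would obtain this description by unwinding the isomorphism of \cref{normalised asymptotic cell}: in the product $t_{w(x')} t_{w(y_{ij})}$, the element $t_{w(x)}$ appears as the $\tau = y_{ij}$ contribution, and the identification of $\tcany$ with the representation ring of $\prod_{i'} \mathrm{GL}_{m_{i'}}$ (with $y_{ij}$ corresponding to the $j$th fundamental weight of the $i$th factor), together with the explicit realization of the dominant elements of a two-sided cell via Shi's and Xi's combinatorics, should yield an expression of the form $w(x)(e_{k,l}) = w(x')(e_{k,l}) + n \, c^{(i,j)}_{k,l}$ for nonnegative integers $c^{(i,j)}_{k,l}$ determined by $(i, j)$ and by the structure of $\lambda$.

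Granting this description, part (1) follows by checking the strict decrease property row by row, using that translation by a positive multiple of $n$ respects the ordering on a given row provided the increments $c^{(i,j)}_{k,l}$ are nonincreasing in $l$ within each row, which one reads off from the form of $y_{ij}$. For part (2), I apply \cref{Formula for length} to compute $l(w(x)) - l(w(x'))$: each term $\lvert \lfloor (w(b) - w(a))/n \rfloor \rvert$ changes by an integer controlled (up to sign) by $c^{(i,j)}_b - c^{(i,j)}_a$, so a careful modulo-$2$ count of these contributions across all pairs $(a, b)$ should yield $j(n + r_i) \pmod 2$, matching the inductive increment of $\sum x_{i',j'}(n + r_{i'})$.

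The main obstacle, and the most delicate point of the argument, is establishing the precise combinatorial description of $w(x)$ in terms of $w(x')$ via the fundamental weight $y_{ij}$. Once this description is in hand, part (1) is transparent, and part (2) reduces to a parity count with the length formula.
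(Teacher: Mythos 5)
There is a genuine gap in the proposal, and it lies exactly where you flag ``the main obstacle.'' The intermediate claim that $w(x)(e_{k,l}) = w(x')(e_{k,l}) + n\,c^{(i,j)}_{k,l}$ for nonnegative integers $c^{(i,j)}_{k,l}$ is false. By \cite[Lemma 5.3.1]{xi2002based} (see \cref{w' from w} in the paper's argument), a single-unit increment of the dominant weight does not shift each $w'(e_{k,l})$ by a multiple of $n$; rather it performs a cyclic rearrangement of values across a set of positions $e_{1,j_1},\dots,e_{r_i,j_{r_i}}$ --- with $w(e_{k,j_k}) = w'(e_{k+1,j_{k+1}})$ for $k < r_i$ and a single $+n$ shift at $w(e_{r_i,j_{r_i}}) = w'(e_{1,j_1}) + n$ --- and leaves all other positions fixed. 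The pointwise differences $w(e_{k,j_k}) - w'(e_{k,j_k}) = w'(e_{k+1,j_{k+1}}) - w'(e_{k,j_k})$ are generically not divisible by $n$; the finite permutations underlying $w$ and $w'$ differ by a nontrivial cycle, not by the identity. Composing several such increments (your $y_{ij}$ step is a composition of $j$ of them) does not repair this, since the intermediate indices $j_1,\dots,j_{r_i}$ change at each step and the cycles do not cancel.

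Because the actual transition is a reindexing of values (plus one $+n$), the computation of $l(w(x)) - l(w(x'))$ via \cref{Formula for length} cannot be done by reading off increments $c^{(i,j)}_{k,l}$ at individual positions. The paper's proof instead compares the $(a,b)$-term in the length sum for $w'$ with the $\bigl(\min(\sigma(a),\sigma(b)),\max(\sigma(a),\sigma(b))\bigr)$-term for $w$, where $\sigma$ is the cycle through the $e_{k,j_k}$, and tracks precisely when the floor/absolute-value brackets gain or lose $1$; the $n + r_i$ in the answer comes from counting the number of such $\pm1$ contributions, not from a uniform $n$-shift. Part (1) is also not immediate under a hypothetical shift-by-$n$: the paper instead verifies the row-monotonicity of $w$ directly by checking that the new values inserted at positions $e_{k,j_k}$ sit between $w'(e_{k,j_k-1})$ and $w'(e_{k,j_k+1})$ (using the defining recursion \cref{bijection recurrence} for the $j_k$). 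The representation-ring identification you invoke from \cref{normalised asymptotic cell} tells you the multiplication rule on the monoid $\domin(\lambda)$, but it does not by itself give a combinatorial description of the permutation $w(x)$, and that description is the crux of the lemma.
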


\begin{proof}
Both properties manifestly hold for $w(0)=w_{\lambda}$. Hence, by induction, it suffices to assume that, if $i$ is maximal such that $x_{ij'}>0$ for some $j'$, and if $j$ is maximal such that $x_{ij}>0$, we have already proven both claims for $w'=w(x')$, where $x'_{ij}=x_{ij}-1$ and $x'_{i'j'}=x_{i'j'}$ for all other $i',j'$. But by \cite[Lemma 5.3.1]{xi2002based}, we have that 
\begin{align}\label{w' from w}
    w(a)=\begin{cases}
    w'(e_{k+1,j_{k+1}}), &a=e_{k,j_k}, \ 1\leq k <r_i \\
    w'(e_{1,j_1})+n, &a=e_{r_i,j_i} \\
    w'(a), &\text{otherwise}
    \end{cases}
\end{align}
where $j_k$ satisfy $j_{r_i}=j$ and
\begin{align}\label{bijection recurrence}
    w'(e_{k,j_k-1})>w'(e_{k+1,j_{k+1}})>w'(e_{k,j_k}) \ \text{for} \ 1\leq k\leq r_i-1,
\end{align}

and furthermore, $w(a)=w'(a)=w_{\lambda}(a)$ if $a>e_{r_i,j}$, and $w(a),w'(a)>0$ if $a>0$.

We consider the first claim. As $w'(e_{k,l})>w'(e_{k,l+1})$, and $w(a)=w'(a)$ for $a\neq e_{k,j_k}$, $1\leq k\leq r_i$, it only remains to check that \begin{align*}
    w'(e_{k,j_k-1})>w'(e_{k+1,j_{k+1}})>w'(e_{k,j_k+1}). 
\end{align*}
But the first of these is exactly the first inequality in \cref{bijection recurrence} above, and by the second inequality in \cref{bijection recurrence} we have $w'(e_{k+1,j_{k+1}})>w'(e_{k,j_k})>w'(e_{k,j_k+1})$. Thus we have the first claim.

To see the second claim, it suffices to show that $l(w)\in l(w')+n+r_i+2\z$. Recall that $l(w')=\sum_{1\leq a<b\leq n}\left|\left\lfloor\frac{w'(b)-w'(a)}{n}\right\rfloor\right|$. Thus we need to determine how the terms in this sum change value upon the substitution $w'\mapsto w$. Let $\sigma$ denote the permutation of $1,\dots,n$ given by $\sigma(e_{k+1,j_{k+1}})=e_{k,j_k}$ for $1\leq k\leq r_i-1$ and $\sigma(e_{1,j_1})=e_{r_i,j_{r_i}}$, and $\sigma(a)=a$ otherwise. We shall compare the $(a,b)$-term in the sum for $l(w')$ to the $(\min(\sigma(a),\sigma(b)),\max(\sigma(a),\sigma(b)))$-term in the sum for $l(w)$. Observe that $w'(a)=w(\sigma(a))$ unless $a=e_{1,j_1}$, in which case $w'(a)+n=w(\sigma(a))$. Hence if $a,b\neq e_{1,j_1}$ then the only way the terms can be different is if $\sigma(b)<\sigma(a)$.

For $a$ and $b$ both not of the form $e_{k,j_k}$ for some $1\leq k\leq r_i$, we have $\sigma(a)<\sigma(b)$, and hence the terms are equal.

Let $1\leq k\leq r_{i-1}$. If $a=e_{k+1,j_{k+1}}$ and $b$ is arbitrary, we have $\sigma(a)<\sigma(b)$ and hence the terms are equal. Similarly, if $b=e_{k+1,j_{k+1}}$ and $a\neq e_{1,j_1}$ we have $\sigma(a)<\sigma(b)$ unless $a>e_{k,j_k}$, in which case $\sigma(a)>\sigma(b)$. If $e_{k,j_k}<a\leq e_{k,\lambda_k}$, then by the first claim we have $w'(a)=w(a)<w(e_{k,j_k})=w'(e_{k+1,j_k+1})$, and so the term changes from $\left\lfloor\frac{w'(e_{k+1,j_k+1})-w'(a)}{n}\right\rfloor$ to \begin{align*}
    \left|\left\lfloor\frac{w'(a)-w'(e_{k+1,j_k+1})}{n}\right\rfloor\right|&=\left\lceil\frac{w'(e_{k+1,j_k+1})-w'(a)}{n}\right\rceil\\
    &=\left\lfloor\frac{w'(e_{k+1,j_k+1})-w'(a)}{n}\right\rfloor+1.
\end{align*}
Meanwhile, if $a\geq e_{k+1,1}$, then by the first claim again we have $w(a)=w'(a)>w'(e_{k+1,j_k+1})=w(e_{k,j_{k}})$, and so the term changes from
\begin{align*}
    \left|\left\lfloor\frac{w'(e_{k+1,j_k+1})-w'(a)}{n}\right\rfloor\right|&=\left\lceil\frac{w'(a)-w'(e_{k+1,j_k+1})}{n}\right\rceil\\
    &=\left\lfloor\frac{w'(a)-w'(e_{k+1,j_k+1})}{n}\right\rfloor-1
\end{align*}
to $\left\lfloor\frac{w'(a)-w'(e_{k+1,j_k+1})}{n}\right\rfloor$.

If $a=e_{1,j_1}$ and $b>e_{r_i,j}$, then $\sigma(a)<\sigma(b)$ and $w(b)=w'(b)=w_{\lambda}(b)\leq n$, and so as $w'(e_{1,j_1})>0$ the term changes from
\begin{align*}
    \left|\left\lfloor\frac{w'(b)-w'(e_{1,j_1})}{n}\right\rfloor\right|=\left\lceil\frac{w'(e_{1,j_1})-w'(b)}{n}\right\rceil
\end{align*}
to
\begin{align*}
    \left|\left\lfloor\frac{w'(b)-w'(e_{1,j_1})-n}{n}\right\rfloor\right|=\left\lceil\frac{w'(e_{1,j_1})-w'(b)}{n}\right\rceil+1.
\end{align*}

If $a=e_{1,j_1}$ and $e_{r_1,j_1}<b\leq e_{r_i,j_{r_i}}$, then $\sigma(a)>\sigma(b)$, and so the term changes from $\left|\left\lfloor\frac{w'(b)-w'(e_{1,j_1})}{n}\right\rfloor\right|$ to $\left|\left\lfloor\frac{w'(e_{1,j_1})+n-w'(b)}{n}\right\rfloor\right|$. If $w'(b)>w'(e_{1,j_1})$ then \begin{align*}
    \left|\left\lfloor\frac{w'(b)-w'(e_{1,j_1})}{n}\right\rfloor\right|=\left\lfloor\frac{w'(b)-w'(e_{1,j_1})}{n}\right\rfloor
\end{align*}
and
\begin{align*}
    \left|\left\lfloor\frac{w'(e_{1,j_1})+n-w'(b)}{n}\right\rfloor\right|=\left\lceil\frac{w'(b)-w'(e_{1,j_1})}{n}\right\rceil-1=\left\lfloor\frac{w'(b)-w'(e_{1,j_1})}{n}\right\rfloor
\end{align*}
while if $w'(b)<w'(e_{1,j_1})$ then
\begin{align*}
    \left|\left\lfloor\frac{w'(b)-w'(e_{1,j_1})}{n}\right\rfloor\right|=\left\lceil\frac{w'(e_{1,j_1})-w'(b)}{n}\right\rceil
\end{align*}
 and
 \begin{align*}
     \left|\left\lfloor\frac{w'(e_{1,j_1})+n-w'(b)}{n}\right\rfloor\right|=\left\lfloor\frac{w'(e_{1,j_1})-w'(b)}{n}\right\rfloor+1=\left\lceil\frac{w'(e_{1,j_1})-w'(b)}{n}\right\rceil.
 \end{align*}
Thus in both cases the term does not change.

Finally. if $b=e_{r_1,j_1}$ then by the first claim $w'(a)>w'(e_{1,j_1})$, and so the contribution changes from
\begin{align*}
    \left|\left\lfloor\frac{w'(e_{1,j_1})-w'(a)}{n}\right\rfloor\right|=\left\lceil\frac{w'(a)-w'(e_{1_{j_1}})}{n}\right\rceil
\end{align*}
to
\begin{align*}
    \left|\left\lfloor\frac{w'(e_{1,j_1}+n-w'(a)}{n}\right\rfloor\right|=\left\lceil\frac{w'(a)-w'(e_{1_{j_1}})}{n}\right\rceil-1.
\end{align*}

        Putting this all together, the change in length is
        \begin{align*}
          l(w)-l(w') = &\sum_{k=1}^{r_i-1}\left((\lambda_k-j_k)-(j_{k+1}-1)\right)+\left(n-j_{r_i}-\sum_{k=1}^{r_i-1}\lambda_k\right)-(j_1-1) \\
            = &n+r_i-2\sum_{k=1}^{r_i}j_k \\
            \in &n+r_i+2\z.
        \end{align*}
\end{proof}

We now have everything we need to define our new basis and show that it is a subalgebra.

\begin{definition}
     For $w\in\weyl$ such that $\asn_w\in \tcany$, define a new element 
     \begin{align*}
         \asu_w=q^{\frac{l(w)-l(w_{\lambda})}{2}}\asn_w\in\tcrtq.
     \end{align*}
     
     Let $\tcuany$ denote the $\zany$-span of $\left\{\asu_{w}\middle|\asn_w\in\tcany\right\}$. Thus
     \begin{align*}
         \tcurtq=\tcrtq=\zrtq\left[t_{w(y_{ij})},t_{w(y_{im_i})}^{-1}\middle|1\leq i\leq p,1\leq j\leq m_i\right].
     \end{align*}
\end{definition}

\begin{thm}\label{properties of B}
$\tcupmq=\zpmq\left[\asu_{w(y_{ij})},\asu_{w(y_{im_i})}^{-1}\middle|1\leq i\leq p,1\leq j\leq m_i\right]$. In particular, it is a commutative and finitely generated $\zpmq$-subalgebra of $\tcurtq$.
\end{thm}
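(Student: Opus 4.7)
The plan is to exploit the parity formula of \cref{lengths of the lattice} to verify that the renormalization $\asu_w=q^{(l(w)-l(w_\lambda))/2}\asn_w$ keeps the Pieri-style product rule of \cref{normalised asymptotic cell} within $\zpmq$. The heart of the argument is a parity computation at the generator level, which then propagates to the whole algebra.

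First I would show closure under multiplication by the $\asu_{w(y_{ij})}$'s. For $x\in\domin(\lambda)$ with $x\geq 0$, the coefficient of $\asu_{w(\tau+x)}$ in $\asu_{w(x)}\asu_{w(y_{ij})}$, obtained by rescaling the $\asn$-product rule of \cref{normalised asymptotic cell} (where each structure constant is $1$), equals $q^{(l(w(x))+l(w(y_{ij}))-l(w(\tau+x))-l(w_\lambda))/2}$. Since $\tau$ and $\tau+x$ have nonnegative entries, \cref{lengths of the lattice} applies to each length; checking parities modulo $2$, the $x$-contributions between $l(w(x))$ and $l(w(\tau+x))$ cancel, and $j(n+r_i)$ from $l(w(y_{ij}))$ matches $\sum\tau_{i'j'}(n+r_{i'})$ since $\tau$ has exactly $j$ ones in row $i$. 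Hence the exponent is an integer. Setting $x=0$ in the Pieri rule shows $\asn_{w_\lambda}=\asu_{w_\lambda}$ acts as the identity on each generator (the only $\tau\in\domin(\lambda)$ with $j$ ones in row $i$ added to $0$ is $y_{ij}$), and iterating expresses every $\asu_{w(x)}$ with $x\geq 0$ as a $\zpmq$-polynomial in the $\asu_{w(y_{ij})}$'s.

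Second I would handle the top-row inverses. Taking $x=-y_{im_i}$ with $y_{ij}=y_{im_i}$ in the Pieri rule (where $\tau=y_{im_i}$ is again the unique term) gives $\asn_{w(-y_{im_i})}\asn_{w(y_{im_i})}=\asn_{w_\lambda}$. Under the representation-ring identification of \cref{normalised asymptotic cell}, the generator $\asn_{w(y_{im_i})}$ corresponds to the determinant of the $i$-th $\mathrm{GL}_{m_i}$-factor, and the anti-involution $\sigma$, which on $\tcany$ acts as $\asn_w\mapsto\asn_{w^{-1}}$, corresponds to taking duals; so $\sigma(\asn_{w(y_{im_i})})=\asn_{w(-y_{im_i})}$, giving $w(-y_{im_i})=w(y_{im_i})^{-1}$ and thus $l(w(-y_{im_i}))=l(w(y_{im_i}))$ by length-preservation under inversion. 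Substituting,
\[
\asu_{w(y_{im_i})}\asu_{w(-y_{im_i})}=q^{l(w(y_{im_i}))-l(w_\lambda)}\asu_{w_\lambda},
\]
with integer exponent by \cref{lengths of the lattice}, so $\asu_{w(y_{im_i})}^{-1}=q^{l(w_\lambda)-l(w(y_{im_i}))}\asu_{w(-y_{im_i})}\in\tcupmq$.

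These two steps yield $\zpmq[\asu_{w(y_{ij})},\asu_{w(y_{im_i})}^{-1}]\subseteq\tcupmq$. For the reverse inclusion, any $x\in\domin(\lambda)$ can be shifted to $x'=x+\sum_ik_iy_{im_i}\geq 0$ for sufficiently large $k_i$; iterating the single-term Pieri relation $\asn_{w(z)}\asn_{w(y_{im_i})}=\asn_{w(z+y_{im_i})}$ shows $\asn_{w(x)}=\asn_{w(x')}\prod_i\asn_{w(y_{im_i})}^{-k_i}$, and translating back into the $\asu$ basis (using the exponents pinned down in the first two steps) exhibits $\asu_{w(x)}$ as a $\zpmq$-unit multiple of $\asu_{w(x')}\prod_i\asu_{w(y_{im_i})}^{-k_i}$, placing it in the generated subalgebra. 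Commutativity and finite generation descend from $\tcurtq$. The main obstacle throughout is the parity bookkeeping: \cref{lengths of the lattice} supplies precisely the vanishing of the half-integer powers of $q$ hidden in the $\asn$-basis on the nonnegative cone, and combining this with the identification $w(y_{im_i})^{-1}=w(-y_{im_i})$ is what lets the analysis propagate to the inverses.
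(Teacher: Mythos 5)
Your outline captures the right starting idea — renormalize by half the length difference and use the parity formula of \cref{lengths of the lattice} on the Pieri rule — but there is a genuine gap that the paper handles with an extra step you omit. Your parity computation in the first part is carried out only for $x\geq 0$, i.e.\ on the nonnegative cone of $\domin(\lambda)$, because that is the hypothesis of \cref{lengths of the lattice}. To show $\tcupmq$ is closed under multiplication (and hence to get the forward inclusion $\zpmq\left[\asu_{w(y_{ij})},\asu_{w(y_{im_i})}^{-1}\right]\subseteq\tcupmq$) you must check that $\asu_{w(x)}\asu_{w(y_{ij})}$ lies in $\tcupmq$ for \emph{arbitrary} $x\in\domin(\lambda)$, including those with negative entries — and those are precisely what appear once the inverses $\asu_{w(y_{im_i})}^{-1}$ enter the picture. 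Your proposal never addresses this. The paper resolves it by a key observation you skip: by \cref{omega to the n}, multiplying by the central element $\omega^n$ sends $w(x)\mapsto w(x')$ with $x'_{ij}=x_{ij}+r_i$ and leaves all lengths unchanged (and applies simultaneously to $w(x)$ and $w(\tau+x)$), so one can shift any $x$ into the nonnegative cone before invoking the parity lemma. Without this, your reverse-inclusion step — writing $\asu_{w(x)}$ as a $q$-power times $\asu_{w(x')}\prod_i\asu_{w(y_{im_i})}^{-k_i}$ — is also incomplete, since the exponent involves $l(w(x))$ for a possibly negative $x$ and you have not established its parity.

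Two further remarks. Your step on the inverses, arguing $w(-y_{im_i})=w(y_{im_i})^{-1}$ via duality in the representation ring and the anti-involution $\sigma$, is a genuinely different route from the paper; it is plausible (and appears to be correct) but invokes compatibilities of $\sigma$ with the isomorphism of \cref{normalised asymptotic cell} that are not verified in the paper, so you would need to justify them. And your reverse-inclusion argument differs from the paper's as well: the paper avoids needing any explicit exponent for negative $x$ by a clean even/odd decomposition — any element of $\tcupmq$ is a $\zrtq$-combination of monomials, each monomial already lies in $\tcupmq$ by the closure just established, and since $\tcupmq\cap q^{\frac{1}{2}}\tcupmq=0$ the odd-power part of the coefficients must cancel, leaving a $\zpmq$-combination. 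You may want to adopt that argument, which sidesteps the issue entirely once closure is in place. The central thing to fix, though, is the missing $\omega^n$-shift.
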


\begin{proof}
To show $\tcupmq$ is a subalgebra, it suffices to show that the product $\asu_{w(x)}\asu_{w(y_{ij})}$ lies in $\tcupmq$. But
\begin{align}\label{integral lattice equation}
\asu_{w(x)}\asu_{w(x_{ij})}=q^{\frac{l(w(x))+l(w(y_{ij}))-l(w_{\lambda)}}{2}}\sum_{\tau+x\in\domin(\lambda)}q^{\frac{-l(w(\tau+x))}{2}}\asu_{w(\tau+x)}
\end{align}
where $\tau$ runs over all tuples that have exactly $j$-many $(i,j')$-entries which are $1$ and all other entries $0$. Hence the goal is to show that \begin{align*}
    l(w(x))+l(w(y_{ij}))-l(w_{\lambda})-l(w(\tau+x))\in2\z.
\end{align*}

Observe that if $\asn_{w}\in\tcany$ then by \cref{omega to the n} we have $\asn_{\omega^nw}\in\tcany$. Indeed, by the construction of $x\mapsto w(x)$ (see \cite[Section 5.2.1]{xi2002based}), $\omega^n w(x)=w(x')$, where $x'_{ij}=x_{ij}+r_i$. Furthermore, this also gives that $\omega^n w(\tau+x)=w(\tau+x')\in\tcany$, and so since $l(w(x))=l(w(x'))$ and $l(w(\tau+x))=l(w(\tau+x'))$, we may replace $x$ with $x'$ in the above formula. Iterating this sufficiently many times, we may assume that $x_{ij}\geq 0$ for all $i,j$.

But then by \cref{lengths of the lattice}, we have:
\begin{align*}
    l(w(x)) &\in l(w_{\lambda})+\sum_{i',j'}x_{i'j'}(n+r_{i'})+2\z \\
    l(w(y_{ij})) &\in l(w_{\lambda})+j(n+r_{i}) +2\z \\
    l(w(\tau+x)) &\in l(w_{\lambda})+\sum_{i',j'}x_{i'j'}(n+r_{i'})+j(n+r_i) + 2\z
\end{align*}
so we indeed have $l(w(x))+l(w(y_{ij}))-l(w_{\lambda})-l(w(\tau+x))\in2\z$.

Now, $\tcurtq=\tcrtq=\zrtq\left[t_{w(y_{ij})},t_{w(y_{im_i})}^{-1}\middle|1\leq i\leq p,1\leq j\leq m_i\right]$. Hence, by rescaling $\asn_{w(y_ij)}\mapsto\asu_{w(y_ij)}$ we have $\tcurtq=\zrtq\left[\asu_{w(y_{ij})},\asu_{w(y_{im_i})}^{-1}\right]$. As $\tcupmq$ is a subalgebra we have $\zpmq\left[\asu_{w(y_{ij})},\asu_{w(y_{im_i})}^{-1}\right]\subseteq\tcupmq$. To see the reverse inclusion, observe that if $x\in\tcupmq$, we can write it as a $\zrtq$-linear combination of products of the $\asu_{w(y_{ij})}$ and $\asu_{w(y_{im_i})}^{-1}$. But a product of the $\asu_{w(y_{ij})}$ and $\asu_{w(y_{im_i})}^{-1}$ is always in $\tcupmq$, and hence a $\zrtq$-linear combination of such products will lie in $\tcupmq$ precisely when the terms with coefficients an odd power of $q^{\frac{1}{2}}$ all sum to zero. Hence, the sum of only the terms with coefficients in $\tcupmq$ will sum to $x$.
\end{proof}

\begin{remark}
We have that $\asn_{w(y_{11})}^2=\asn_{w(2y_{11})}+\asn_{w(y_{12})}$, and by \cite[Section 5.5]{xi2002based} we have $l(w(y_{11}))=l(w_{\lambda})+n-r_1$ and $l(w(y_{12}))=l(w_{\lambda})+2n-4r_1$, so the coefficient of $\asu_{w(y_{12})}$ in $\asu_{w(y_{11})}^2$ is $q^{-r_1}$. Thus, $\tcuq$ is not a subalgebra.
\end{remark}

We also introduce notation for the structure coefficients of our new basis.

\begin{definition}
    Let $\ascu_{u,v}^w$ be the coefficient of $\asu_w$ in the product $\asu_u\asu_v$.
\end{definition}

\begin{remark}
Thus $\ascu_{u,v}^w=q^{\frac{l(u)+l(v)-l(w)-l(w_{\lambda})}{2}}\ascn_{u,v}^w$ for $u,v,w\in\hecia^{\lambda}$.
\end{remark}

\section{The Affine Cellular Structure}

We recall for reference the definition of an Affine Cellular algebra as given in \cite[Definition 2.1]{koenig2012affine}.

\begin{definition}\label{affine cellular}
    Let $R$ be a commutative Noetherian ring, let $A$ be an $R$-algebra, and let $i$ be an $R$-anti-involution on $A$. A 2-sided ideal $J$ in $A$ such that $i(J)=J$ is an affine cell ideal if there are
    \begin{itemize}
        \item a free $R$-module of finite rank $V$,
        \item a finitely generated commutative $R$-algebra $B$ with $R$-anti-involution $\sigma$,
        \item a left $A$-module structure on $\Delta=V\otimes_{R} B$ such that $\Delta$ is an $A$-$B$-bimodule with the regular right $B$-module structure,
    \end{itemize}
    such that, if we define a right $A$-module structure on $\Delta'=B\otimes_{R} V$ by $xa=\tau^{-1} (i(a)\tau(x))$ where $\tau:\Delta'\rightarrow\Delta$, $b\otimes v\mapsto v\otimes b$, there is an isomorphism of $A$-$A$-bimodules $\alpha: J\rightarrow \Delta\otimes_B\Delta'=V\otimes_{R} B\otimes_{R} V$ making the following diagram commute:
    \[\begin{tikzcd}
        J & {V\otimes_{R} B\otimes_{R} V} \\
        J & {V\otimes_{R} B\otimes_{R} V}
        \arrow["\alpha", from=1-1, to=1-2]
        \arrow["{v\otimes b\otimes v' \mapsto v'\otimes\sigma(b)\otimes v}", from=1-2, to=2-2]
        \arrow["i"', from=1-1, to=2-1]
        \arrow["\alpha"', from=2-1, to=2-2]
    \end{tikzcd}\]

    $A$ is affine cellular if there is an $R$-module decomposition $A=\bigoplus_{k=1}^KJ_k'$ such that, for all $k$, we have
    \begin{enumerate}
        \item $i(J_k')=J_k'$
        \item $J_k=\bigoplus_{k'=1}^kJ_{k'}'$ is a 2-sided ideal in $A$
        \item $J_k'=J_k/J_{k-1}$ is an affine cell ideal in $A/J_{k-1}$.
    \end{enumerate} 
\end{definition}

\begin{remark}
Recall from \cite[Proposition 2.2]{koenig2012affine} that an affine cell ideal $J$ inherits a product from $A$, which via $\alpha$ gives a product on $V\otimes_RB\otimes_RV$, and this product is of the form $(u\otimes b\otimes v)(u'\otimes b'\otimes v')=u\otimes b(v,u')b'\otimes v'$ for some $R$-bilinear map $V\times V\rightarrow B$.
\end{remark}

In this section we will show that $\hecpmq$ and $\schpmq$, together with their involution $\iota$, are affine cellular over $\zpmq$. The last remaining result is an integral version of the following well-known result.

\begin{prop}\label{normalised actions commute}
    Let $\klu_{QP}^u\in\schpmq$, let $\klu_{P\emptyset}^v,\klu_{Q\emptyset}^y\in\Psi^{\lambda}$, and let $\asu_{\emptyset\emptyset}^{w}\in\schipm^{\lambda}$. Then
    
    \begin{align*}
        \sum_{\klu_{Q\emptyset}^x\in\Psi^{\lambda}}\scn_{uPv}^{Qx\emptyset}\ascn_{x ,w}^{ y}=\sum_{\klu_{P\emptyset}^x\in\Psi^{\lambda}}\scn_{u Px}^{Qy\emptyset}\ascn_{v ,w}^{x}.
    \end{align*}
\end{prop}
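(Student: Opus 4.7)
The plan is to derive this identity from associativity in the Schur algebra, applied to the triple product $\kln^u_{QP}\cdot\kln^v_{P\emptyset}\cdot\asn^w_{\emptyset\emptyset}\in\schrtq$. I would compute the coefficient of $\kln^y_{Q\emptyset}$ in two association orders and equate them.

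Let $a=a(w)$, so $\asn^w_{\emptyset\emptyset}=q^{-a/2}\kln^w_{\emptyset\emptyset}$. Expanding the rightmost pair first, \cref{schur algebra structure coefficients} together with $p_{\emptyset}=1$ gives $\kln^v_{P\emptyset}\asn^w_{\emptyset\emptyset}=\sum_{x}q^{-a/2}\scn^x_{v,w}\kln^x_{P\emptyset}$; left-multiplying by $\kln^u_{QP}$ and applying the same lemma again shows the coefficient of $\kln^y_{Q\emptyset}$ is $\sum_{x}q^{-a/2}\scn^x_{v,w}\scn^{Qy\emptyset}_{uPx}$. Conversely, expanding the leftmost pair first yields $\kln^u_{QP}\kln^v_{P\emptyset}=\sum_{x'}\scn^{Qx'\emptyset}_{uPv}\kln^{x'}_{Q\emptyset}$, and right-multiplying by $\asn^w_{\emptyset\emptyset}$ produces the coefficient $\sum_{x'}\scn^{Qx'\emptyset}_{uPv}\cdot q^{-a/2}\scn^y_{x',w}$. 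Associativity equates these two expressions.

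I would then argue that both sums can be restricted to $x,x'\in\Gamma^{\lambda}$. By \cref{left cells are left ideals}, $\Psi^{\lambda}$ is a left ideal in $\schany/\schany^{>\lambda}$, so $\kln^u_{QP}\kln^v_{P\emptyset}\in\Psi^{\lambda}+\schany^{>\lambda}$; any $\kln^{x'}_{Q\emptyset}$ with $x'$ in a strictly higher cell lies in $\schany^{>\lambda}$, and since this is two-sided by \cref{cells are ideals}, it remains in $\schany^{>\lambda}$ after multiplication by $\asn^w_{\emptyset\emptyset}$ and so contributes nothing to the coefficient on $\kln^y_{Q\emptyset}$ with $y\in\Gamma^{\lambda}$. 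A symmetric argument handles the other sum.

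Finally, all indices in the restricted sums lie in cell $\lambda$ and share the common $a$-value $a$, whence $q^{-a/2}\scn^y_{x',w}=\ascn^y_{x',w}$ and $q^{-a/2}\scn^x_{v,w}=\ascn^x_{v,w}$ by the construction of the $\asn$-basis, giving exactly the desired identity. The main obstacle I anticipate is the restriction step: raw associativity produces unrestricted sums, so one has to use the interplay between the left-ideal structure of $\Psi^{\lambda}$ and the two-sided-ideal structure of $\schany^{>\lambda}$ on each side of the multiplication to discard off-cell contributions cleanly.
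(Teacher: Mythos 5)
Your argument breaks down at the final step. You are claiming that, once all indices are restricted to the two-sided cell $\lambda$, the quantity $q^{-a/2}\scn^y_{x',w}$ \emph{equals} $\ascn^y_{x',w}=\gamma^y_{x',w}$. That is not true: by construction $\scn^y_{x',w}$ has top degree $a=a(y)$ in $q^{\frac{1}{2}}$, so $q^{-a/2}\scn^y_{x',w}$ lies in $\zmq$ with \emph{constant term} $\gamma^y_{x',w}$, but it typically has further terms in negative powers of $q^{\frac{1}{2}}$. Consequently, what your associativity computation produces is the identity
\begin{align*}
\sum_{x}q^{-a/2}\scn^x_{v,w}\,\scn^{Qy\emptyset}_{uPx} \;=\; \sum_{x'}\scn^{Qx'\emptyset}_{uPv}\,q^{-a/2}\scn^y_{x',w},
\end{align*}
which is just associativity multiplied by $q^{-a/2}$ and is \emph{not} the statement of the proposition. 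You cannot pass from $q^{-a/2}\scn$ to $\gamma$ on only one factor of each summand, nor can you take constant terms on both sides, because taking leading or constant terms does not commute with taking products or sums.

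The proposition is in fact a nontrivial statement about the compatibility of the Kazhdan--Lusztig structure constants with the asymptotic structure constants, and it does not follow from associativity in $\schrtq$ alone. The paper proves it by first reducing everything to the affine subalgebra (using \cref{structure constant facts}, \cref{affine asymptotic structure}, and \cref{a is finite}), then invoking Lusztig's identity \cite[Equation 2.4(d)]{lusztig1987cells}, which is precisely the mixed $\scn$--$\gamma$ commutation relation you need, and only afterwards restricting the sums using cell and maximal-coset-representative arguments. That citation is the essential missing ingredient in your proposal: without it (or an equivalent input from the theory of the asymptotic algebra) there is no route from associativity to the stated identity.

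Your restriction step (discarding off-cell terms via the left-ideal and two-sided-ideal structure) is correct in spirit and parallels a portion of the paper's argument, but it cannot compensate for the loss of the main identity.
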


\begin{proof}
By \cref{a is finite} (3) and \cref{hecke and shcur cell relations} we have $a(v)=a(y)$. 
    
Write $u=u'\omega^i$, $v=v'\omega^j$, $w=w'\omega^{k}$, and $y=y'\omega^l$ for $u',v',w',y'\in\affw$. Also write $v''=\omega^i v'\omega^{-i}$, $w''=\omega^{i+j} w'\omega^{-(i+j)}$, and $w'''=\omega^j w'\omega^{-j}$. Note in particular that by \cref{a is finite} (3) we have $a(v)=a(v')=a(v'')$ and $a(y')=a(y)$.
    
Hence, writing $x=x'\omega^m$ for $x\in\affw$ and noting that $a(x)=a(x')$ by \cref{a is finite} (3), we have by \cref{structure constant facts} and \cref{affine asymptotic structure} that

\begin{align*}
    \sum_{a(x)=a(v)}\scn_{u,v}^x\ascn_{x,w}^y &=\sum_{a(x')=a(v'')}\scn_{u',v''}^{x'}\ascn_{x',w''}^{y'}\delta_{i+j+k,l} \\
    \sum_{a(x)=a(v)}\scn_{u,x}^y\ascn_{v,w}^x&=\sum_{a(x')=a(v')}\scn_{u',\omega^{i}x'\omega^{-i}}^{y'}\ascn_{v',w'''}^{x'}\delta_{i+j+k,l}.
\end{align*}

In the latter equation, performing the substitution $x''=\omega^{i} x'\omega^{-i}$ and again noting by \cref{a is finite} (3) that $a(x')=a(x'')$, we have by \cref{affine asymptotic structure} that
\begin{align*}
    \sum_{a(x')=a(v')}\scn_{u',\omega^{i}x'\omega^{-i}}^{y'}\ascn_{v',w'''}^{x'}\delta_{i+j+k,l}&=\sum_{a(x'')=a(v'')}\scn_{u',x''}^{y'}\ascn_{v'',w''}^{x''}\delta_{i+j+k,l}
\end{align*}
Finally, by \cite[Equation 2.4(d)]{lusztig1987cells} we have
\begin{align*}
    \sum_{a(x')=a(v'')}\scn_{u',v''}^{x'}\ascn_{x',w''}^{y'}\delta_{i+j+k,l}
    &=\sum_{a(x'')=a(v'')}\scn_{u',x''}^{y'}\ascn_{v'',w''}^{x''}\delta_{i+j+k,l}.
\end{align*}
Putting this all together gives:
\begin{align*}
    \sum_{a(x)=a(v)}\scn_{u,v}^x\ascn_{x,w}^y &=\sum_{a(x')=a(v'')}\scn_{u',v''}^{x'}\ascn_{x',w''}^{y'}\delta_{i+j+k,l} \\
    &=\sum_{a(x'')=a(v'')}\scn_{u',x''}^{y'}\ascn_{v'',w''}^{x''}\delta_{i+j+k,l} \\
    &=\sum_{a(x')=a(v')}\scn_{u',\omega^{i}x'\omega^{-i}}^{y'}\ascn_{v',w'''}^{x'}\delta_{i+j+k,l} \\
    &=\sum_{a(x)=a(v)}\scn_{u,x}^y\ascn_{v,w}^x.
\end{align*}

Now, note that if $\scn_{u,v}^x\neq 0$ then $x\leq_{L}v$, and so by \cite[Corollary 1.9(b)]{lusztig1987cells} and \cref{cells on the affine subgroup} we have $x\sim_{L}v$ and hence $\klu_x\in\Gamma^{\lambda}$. Similarly, if $\scn_{u,x}^y\neq 0$ then $y\leq_{L}x$, and the same reasoning shows $\klu_x\in\Gamma^{\lambda}$. Thus we get
    \begin{align*}
        \sum_{\klu_x\in\Gamma^{\lambda}}\scn_{u,v}^x\ascn_{x,w}^y=\sum_{\klu_x\in\Gamma^{\lambda}}\scn_{u,x}^y\ascn_{v,w}^x.
    \end{align*}
    
But by \cref{schur algebra structure coefficients} we have that
\begin{align*}
    \scn_{uPv}^{Qx\emptyset}\ascn_{x w}^{y}=p_{P}^{-1}q^{\frac{l(w_P)}{2}} \scn_{u,v}^x\ascn_{x,w}^y
\end{align*}
 and
 \begin{align*}
 \scn_{u Px}^{Qy\emptyset}\ascn_{v w}^{x}=p_{P}^{-1}q^{\frac{l(w_P)}{2}}\scn_{u,x}^y\ascn_{v,w}^x.    
 \end{align*}
 
Hence, to finish the proof, it suffices to show that we can restrict the sums to only $x$ which are maximal length $\weyl_Q\backslash\weyl$ and $\weyl_P\backslash\weyl$ coset representatives respectively. But we know that $u$ and $v$ are maximal length representatives of these respective cosets, which by \cref{kl basis and idempotents} means exactly that the left action of $s\in Q$ and $s\in P$ on $\kln_u$ and $\kln_v$ respectively is multiplication by $q$. But acting on these by right multiplication does not change this property, so the sum is indeed only nonzero for such $x$.
\end{proof}

For each $\lambda\vdash n$, fix some indexing $\Gamma_1,\dots,\Gamma_{n_{\lambda}}$ of the left cells in $\hecpmq^{\lambda}$ such that $\Gamma_{1}=\Gamma^{\lambda}$. Then extend this to an indexing $\Psi_1,\dots,\Psi_{m_{\lambda}}$ of the left cells in $\schpmq^{\lambda}$ such that $\Gamma_{i}\subseteq\Phi_i$.

\begin{thm}\label{bimodule actions commute}
Let $\klu_{QP}^u\in\schpmq$, let $\klu_{P\emptyset}^v,\klu_{Q\emptyset}^y\in\Psi^{\lambda}$ such that $\klu_{P\emptyset}^v\in \iota\left(\Psi_i\right)$ and $\klu_{Q\emptyset}^y\in \iota\left(\Psi_j\right)$, and let $\asu_{w}\in B$. For $\klu_{R\emptyset}^x\in \iota\left(\Psi_k\right)$, let $\klu_{\emptyset \emptyset}^{x_k}$ be the unique element of $\iota\left(\Psi^{\lambda}\right)$ such that $\klu_{\emptyset\emptyset}^{x_k}=q^m\phi_{\iota\left(\Psi_k\right)}\left(\klu_{R\emptyset}^x\right)$ for some $m\in\frac{1}{2}\z$. Then
 \begin{align*}
    \sum_{\klu_{Q\emptyset}^x\in\Psi^{\lambda}\cap\iota\left(\Psi_j\right)}\scu_{uPv}^{Qx\emptyset}\ascu_{x_j,w}^{y_j}=\sum_{\klu_{P\emptyset}^x\in\Psi^{\lambda}\cap \iota\left(\Psi_i\right)}\scu_{u Px}^{Qy\emptyset}\ascu_{v_i,w}^{x_i}.
\end{align*}
\end{thm}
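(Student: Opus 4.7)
The plan is to deduce \cref{bimodule actions commute} from \cref{normalised actions commute} by translating the identity from the $(\kln,\ascn)$-bases to the $(\klu,\ascu)$-bases, together with a reindexing of the summation variables via the cell isomorphisms $\phi_{\iota(\Psi_j)}$ and $\phi_{\iota(\Psi_i)}$.

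First, I would record the scaling identities needed for the basis change. From \cref{schur algebra structure coefficients} we have $\scu_{uRv}^{QwP}=q^{(l(u)+l(v)-l(w)-l(w_R))/2}\scn_{uRv}^{QwP}$, and from the remark following \cref{properties of B} we have $\ascu_{u,v}^w=q^{(l(u)+l(v)-l(w)-l(w_\lambda))/2}\ascn_{u,v}^w$. The automatic restriction of the sums in \cref{normalised actions commute} to $\Psi^\lambda\cap\iota(\Psi_j)$ (respectively $\Psi^\lambda\cap\iota(\Psi_i)$) comes from \cref{asymptotic algebra sum of cells}, since a nonzero $\ascn_{x,w}^y$ forces $x\sim_R y$, so $y\in\iota(\Psi_j)$ implies $x\in\iota(\Psi_j)$.

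Next I would handle the reindexing. Unpacking $\phi_{\iota(\Psi_j)}=\iota\phi_{\Psi_j}\iota$ using \cref{cell basis bijection}, the map is a sequence of left star operations (the $\iota$-conjugates of the right star operations composing the bijection) followed by an $\omega^{i_{\Gamma_j}}$-shift, so that $x_j$ is obtained from $x$ by these operations. To convert $\ascn_{x,w}^y$ to $\ascn_{x_j,w}^{y_j}$, I would apply the $\iota$-conjugate of \cref{normalised cell equivalence}, which reads $\scn_{u,v}^w=\scn_{\prescript{*}{}u,v}^{\prescript{*}{}w}$ for $u,w\in D_L(s)$ with $u\sim_R w$; extracting the top $q$-coefficient (which is well-defined by \cref{a is finite}(3) together with $\prescript{*}{}w\sim_L w$ from \cref{cells respect star operation}(3)) yields the asymptotic analog $\ascn_{u,v}^w=\ascn_{\prescript{*}{}u,v}^{\prescript{*}{}w}$. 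Iterating this and absorbing the $\omega^{i_{\Gamma_j}}$-shift via \cref{affine asymptotic structure} (which simultaneously shifts the target index) produces the required identity $\ascn_{x,w}^y=\ascn_{x_j,w}^{y_j}$, with the parallel construction on the right giving $\ascn_{v,w}^x=\ascn_{v_i,w}^{x_i}$.

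Combining the scaling and the reindexing, each term on the LHS of \cref{normalised actions commute} becomes a power of $q$ times $\scu_{uPv}^{Qx\emptyset}\ascu_{x_j,w}^{y_j}$, and similarly for the RHS. The main obstacle will be the careful bookkeeping of $q$-factors: after substitution, the $x$-dependent exponents $(l(x)-l(x_j))/2$ and $(l(x)-l(x_i))/2$ appear inside the sums, and one must verify that they can be consolidated with the other exponents to recover the clean identity claimed in \cref{bimodule actions commute} over $\zpmq$. This relies on \cref{stars are integral} for the parity of star-operation length changes, together with the parity control from \cref{lengths of the lattice} for elements of $\tcany$, ensuring that the cumulative exponents are integers and that the translated identity genuinely lies in $\zpmq$ rather than only in $\zrtq$.
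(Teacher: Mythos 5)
Your route to the key reindexing identity $\ascn_{x,w}^y=\ascn_{x_j,w}^{y_j}$ is genuinely different from the paper's. You keep $w$ fixed and iterate the $\iota$-conjugate of \cref{normalised cell equivalence}, descending to the top $q$-coefficient at each step (justified by $a(y)=a(\prescript{*}{}y)$) and absorbing the $\omega$-shift via \cref{affine asymptotic structure}, while you get the restriction of the summation range from \cref{asymptotic algebra sum of cells}. The paper instead substitutes $w^k$ for $w$ in \cref{normalised actions commute}, sums over $k$, uses the matrix-algebra form of \cref{normalised asymptotic cell} to kill all but one term and to transfer indices via the identity $\ascn_{x_j,w}^{y_j}=\ascn_{x,w^j}^y$. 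Your version is arguably more direct; the paper's is closer in spirit to the matrix description of $\schia^\lambda$ that the rest of Section~5 is built on. Both are legitimate derivations of the same $\scn,\ascn$-level identity.

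Where the argument is not closed is the final $q$-power bookkeeping, and you have misidentified what needs to be checked there. After converting each factor to $\scu,\ascu$, the coefficient of a term in the left sum carries the $x$-dependent exponent $\tfrac{1}{2}\bigl(l(x_j)-l(x)\bigr)$, and to pull this out of the sum one needs it to be \emph{constant} as $\klu_{Q\emptyset}^x$ ranges over $\Psi^\lambda\cap\iota(\Psi_j)$, equivalently $l(x)-l(x_j)=l(y)-l(y_j)$; only then do the prefactors on the two sides match and cancel. This is not a parity statement. \cref{stars are integral} only controls the parity of a \emph{single} star-operation length change, and \cref{lengths of the lattice} concerns only the elements of the diagonal block $\tcany$, so neither supplies the needed constancy across the full right cell $\iota(\Psi_j)$. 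This is exactly the step the paper's proof handles by asserting ``by construction of $\phi_{\iota(\Psi_j)}$ we have that $l(x)-l(x_j)=l(y)-l(y_j)$,'' and your argument cannot conclude without establishing (or at least invoking) that fact.
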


\begin{proof}
For $1\leq k\leq n_{\lambda}$, let $\klu_{\emptyset\emptyset}^{w^k}$ be the unique element of $\Psi_k\cap\iota\left(\Psi_k\right)$ such that $\klu_{\emptyset\emptyset}^{w}=q^m\phi_{\Psi_k\cap\iota\left(\Psi_k\right)}\left(\klu_{\emptyset\emptyset}^{w^k}\right)$ for some $m\in\frac{1}{2}\z$.
Taking \cref{normalised actions commute} and summing over all $w_k$ gives
    \begin{align*}
        \sum_{k=1}^{n_{\lambda}}\sum_{\klu_{Q\emptyset}^x\in\Psi^{\lambda}}\scn_{uPv}^{Qx\emptyset}\ascn_{x w_k}^{y}=\sum_{k=1}^{n_{\lambda}}\sum_{\klu_{P\emptyset}^x\in\Gamma^{\lambda}}\scn_{uPx}^{Qy\emptyset}\ascn_{v w_k}^{x}.
    \end{align*}

However, by \cref{normalised asymptotic cell}, we have $\gamma_{x w_k}^{y}=0$ unless $k=j$ and $\gamma_{v w_k}^{x}=0$ unless $k=i$. Hence this reduces to
\begin{align*}
    \sum_{\klu_{Q\emptyset}^x\in\Psi^{\lambda}}\scn_{uPv}^{Qx\emptyset}\ascn_{x w_j}^{y}=\sum_{\klu_{P\emptyset}^x\in\Psi^{\lambda}}\scn_{uPx}^{Qy\emptyset}\ascn_{v w_i}^{x}.
\end{align*}

But, by \cref{normalised asymptotic cell} again, if $\klu_{Q\emptyset}^x\notin\iota\left(\Psi_j\right)$ then $\gamma_{x w_j}^{y}=0$. Similarly, if $\klu_{P\emptyset}^x\notin\iota\left(\Psi_i\right)$ then $\gamma_{v w_i}^{x}=0$. Hence we may further reduce the sum to 
\begin{align*}
    \sum_{\klu_{Q\emptyset}^x\in\Psi^{\lambda}\cap\iota\left(\Psi_j\right)}\scn_{uPv}^{Qx\emptyset}\ascn_{x w^j}^{y}=\sum_{\klu_{P\emptyset}^x\in\Psi^{\lambda}\cap\iota\left(\Psi_i\right)}\scn_{uPx}^{Qy\emptyset}\ascn_{v w^i}^{x}.
\end{align*}

Now, by \cref{normalised asymptotic cell} one more time we have that $\ascn_{x_j w}^{ y_j}=\ascn_{x w^j}^{y}$ and $\ascn_{v w^i}^{x}=\ascn_{v_i w}^{ x_i}$, and hence
\begin{align*}
\sum_{\klu_{Q\emptyset}^x\in\Psi^{\lambda}\cap\iota\left(\Psi_j\right)}\scn_{uPv}^{Qx\emptyset}\ascn_{x_j w}^{ y_j} 
=\sum_{\klu_{P\emptyset}^x\in\Psi^{\lambda}\cap\iota\left(\Psi_i\right)}\scn_{uPx}^{Qy\emptyset}\ascn_{v_i w}^{ x_i}.
\end{align*}

Now, in the left sum, as both $\klu_{Q\emptyset}^x,\klu_{Q\emptyset}^y\in \iota\left(\Psi_j\right)$, by construction of $\phi_{\iota\left(\Psi_j\right)}$ we have that $l(x)-l(x_j)=l(y)-l(y_j)$. Analogously, in the right sum, $l(x)-l(x_i)=l(v)-l(v_i)$. Using this, we may now conclude:
\begin{align*}
    &\sum_{\klu_{Q\emptyset}^x\in\Psi^{\lambda}\cap\iota\left(\Psi_j\right)}\scu_{uPv}^{Qx\emptyset}\ascu_{x_j,w}^{y_j}\\ =&\sum_{\klu_{Q\emptyset}^x\in\Psi^{\lambda}\cap\iota\left(\Psi_j\right)}q^{\frac{l(u)+l(v)+l(w)+l(x_j)-l(x)-l(y_j)-l(w_{\lambda})}{2}}\scn_{uPv}^{Qx\emptyset}\ascn_{x_j,w}^{y_j} \\
    =&q^{\frac{l(u)+l(v)+l(w)-l(y)-l(w_{\lambda})}{2}}\sum_{\klu_{Q\emptyset}^x\in\Psi^{\lambda}\cap\iota\left(\Psi_j\right)}\scn_{uPv}^{Q\emptyset}\ascn_{x_j,w}^{y_j} \\
    =&q^{\frac{l(u)+l(v)+l(w)-l(y)-l(w_{\lambda})}{2}}\sum_{\klu_{P\emptyset}^x\in\Psi^{\lambda}\cap\iota\left(\Psi_i\right)}\scn_{uPx}^{Qy\emptyset}\ascn_{v_i w}^{ x_i} \\
    =&\sum_{\klu_{P\emptyset}^x\in\Psi^{\lambda}\cap\iota\left(\Psi_i\right)}q^{\frac{l(u)+l(v_i)+l(w)+l(x)-l(x_i)-l(y)-l(w_{\lambda})}{2}}\scn_{uPx}^{Qy\emptyset}\ascn_{v_i,w}^{x_i} \\
    =&\sum_{\klu_{P\emptyset}^x\in\Psi^{\lambda}\cap\iota\left(\Psi_i\right)}\scu_{uPx}^{Qy\emptyset}\ascu_{v_i,w}^{x_i}.
\end{align*}

\end{proof}

We are now ready to put all the pieces together.

\begin{thm}\label{schur affine cellular}
$\hecpmq$ and $\schpmq$ are affine cellular over $\zpmq$. More specifically:
\begin{itemize}
    \item The cell ideals are respectively the $\hecpmq^{\lambda}$ and the $\schpmq^{\lambda}$
    \item The order on the cell ideals is any total order extending the decreasing dominance order
    \item The affine cell ideal structure on $\hecpmq^{\lambda}$ and $\schpmq^{\lambda}$ is given by
    \begin{itemize}
        \item the free $\zpmq$-modules $V=\left<v_{1},\dots, v_{n_{\lambda}}\right>$ and $U=\left<u_{1},\dots, u_{m_{\lambda}}\right>$ of rank
        \begin{align*}
           n_{\lambda}=\frac{n!}{\prod_{i}(\mu_i!)} \quad \text{and} \quad m_{\lambda}=\prod_{i=1}^{n-1}\left(\frac{n!}{(n-i)!i!}\right)^{\lambda_i-\lambda_{i+1}}
        \end{align*}
        respectively, where $\mu$ is the dual partition of $\lambda$
        \item the finitely generated commutative algebra 
        \begin{align*}
            B=\tcupmq=\zpmq\left[\asu_{w(y_{ij})},\asu_{w(y_{im_i})}^{-1}\middle|1\leq i\leq p,1\leq j\leq m_i\right]
        \end{align*}
        where $m_i$ is the $i$th nonzero value in the sequence $\lambda_r-\lambda_{r+1}$
        \item the isomorphisms
        \begin{align*}
            \alpha^{-1}:V\otimes_{\zpmq}B\otimes_{\zpmq}V&\rightarrow \hecpmq^{\lambda} \\ \beta^{-1}:U\otimes_{\zpmq}B\otimes_{\zpmq}U&\rightarrow \schpmq^{\lambda}
        \end{align*}
        given by
        \begin{align*}
            v_i\otimes \asu_{w}\otimes v_j&\mapsto \phi_{\Gamma_{j}\cap\iota\left(\Gamma_i\right)}\left(\klu_{w}\right) \\
            u_i\otimes \asu_{w}\otimes u_j&\mapsto \phi_{\Psi_{j}\cap\iota\left(\Psi_i\right)}\left(\klu_{\emptyset\emptyset}^{w}\right).
        \end{align*}
    \end{itemize}
\end{itemize}
\end{thm}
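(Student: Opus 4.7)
The plan is to verify \cref{affine cellular} directly, using the machinery assembled in Sections 3--5. I will focus on $\schpmq$; the argument for $\hecpmq$ is entirely analogous (in fact simpler), obtained by specialising to $P=Q=\emptyset$ throughout. The outer part of the affine cellular structure is essentially already available: by \cref{cells are ideals} each $\schpmq^{\geq\lambda}$ is a two-sided $\iota$-stable ideal, by \cref{index for two sided cells} the set $\Lambda$ is finite, and any total order refining the reverse of the dominance order makes the $\schpmq^{\geq\lambda}$ into a chain whose successive quotients are the $\schpmq^{\lambda}$. So the bulk of the work is to exhibit, for each $\lambda$, an affine cell ideal structure on $\schpmq^{\lambda}$ inside $\schpmq/\schpmq^{>\lambda}$.

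First I would verify that the stated map $\beta^{-1}$ is a $\zpmq$-module isomorphism. By \cref{hecke and shcur cell relations}, $\schpmq^{\lambda}=\bigoplus_{i,j}\Psi_j\cap\iota(\Psi_i)$, and since $q^{-1}\in\zpmq$, \cref{cells are module iso}(1) tells us that each $\phi_{\Psi_j\cap\iota(\Psi_i)}$ is a $\zpmq$-linear isomorphism onto $\Psi^{\lambda}\cap\iota(\Psi^{\lambda})$. By \cref{normalised asymptotic cell}, the basis of the latter is indexed by the same set as the basis $\{\asu_w\}$ of $B=\tcupmq$ (by \cref{properties of B}), via $\klu_{\emptyset\emptyset}^{w}\leftrightarrow\asu_{w}$. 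Patching these identifications together yields the required $\zpmq$-module isomorphism, and the rank of $U$ matches \cref{number of left cells}.

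The main substance of the proof is to show that $\beta^{-1}$ is an $A$-$A$-bimodule isomorphism, where the source carries the bimodule structure of $\Delta\otimes_B\Delta'$. Transporting the left $A$-action on the single left cell $\Psi^{\lambda}$ (a left ideal in $A/\schpmq^{>\lambda}$ by \cref{left cells are left ideals}) through the $\phi$ maps endows $\Delta=U\otimes_{\zpmq}B$ with a left $A$-module structure; commutativity with the regular right $B$-action reduces via \cref{normalised asymptotic cell} to the fact that left $A$-multiplication on $\Psi^{\lambda}$ is governed by the $\scu$ coefficients while the right asymptotic action is governed by the $\ascu$ coefficients, and these operate on disjoint indices. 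To check that $\beta^{-1}$ intertwines the two $A$-$A$-bimodule structures, I would expand the left action of an element $a$ on $\beta^{-1}(u_i\otimes\asu_w\otimes u_j)$ in the $\klu$-basis, translate through the $\phi$ identification using \cref{left cells are iso}, and compare with the corresponding right action. The key associativity identity that this computation reduces to --- namely that the left and right $A$-actions commute across $\otimes_B$ --- is exactly \cref{bimodule actions commute}. This is the main obstacle, and every normalisation in Section 5 has been chosen precisely to make this identity hold integrally over $\zpmq$.

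Finally, compatibility of $\beta^{-1}$ with $\iota$ and the swap $u\otimes b\otimes u'\mapsto u'\otimes\sigma(b)\otimes u$ follows from \cref{cells are module iso}(3), namely $\iota\phi_{\Psi_j\cap\iota(\Psi_i)}=\phi_{\Psi_i\cap\iota(\Psi_j)}\iota$, together with $\sigma(\asu_w)=\asu_{w^{-1}}$. The latter is well-defined because $l(w)=l(w^{-1})$, so the rescaling factor relating $\asn$ to $\asu$ is the same at $w$ and $w^{-1}$. These two facts together swap the $i$ and $j$ indices in precisely the required way, yielding commutativity of the diagram in \cref{affine cellular} and completing the verification. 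The $\hecpmq$ case is then handled by the identical argument, noting that by \cref{cells are module iso}(2) the $\phi$ maps on $\hecpmq^{\lambda}$ already agree with those on the $(\emptyset,\emptyset)$-block of $\schpmq^{\lambda}$.
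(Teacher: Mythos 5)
Your outline follows the same path as the paper's proof and cites the same key lemmas (\cref{cells are ideals}, \cref{number of left cells}, \cref{cells are module iso}, \cref{left cells are iso}, \cref{bimodule actions commute}, \cref{normalised asymptotic cell}), so the overall strategy is sound. However, there is a genuine misallocation of logical weight that amounts to a gap. You assert that the commutativity of the left $A$-action with the regular right $B$-action on $\Delta=U\otimes_{\zpmq}B$ \emph{reduces} to the observation that the $\scu$ and $\ascu$ structure constants "operate on disjoint indices", and then invoke \cref{bimodule actions commute} separately as the tool that makes $\beta^{-1}$ an $A$-$A$-bimodule map. This has the roles reversed. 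The two families of structure constants share the summed-over index $x$ and interact in a genuinely nontrivial way, so the $A$-$B$-bimodule commutativity on $\Delta$ is not a formal disjoint-indices fact: it is precisely the content of \cref{bimodule actions commute}, and indeed the entire renormalisation of Section~5 is engineered to make this statement hold integrally over $\zpmq$. If this commutativity really were formal, the paper would not need \cref{bimodule actions commute} at all.

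Conversely, once $\Delta$ carries a well-defined left $A$-module structure (which is exactly what \cref{left cells are iso} supplies, by making every $\phi_{\Psi_j}$ a left $A$-module isomorphism onto $\Psi^{\lambda}$, so that the $j$-columns all carry \emph{the same} left action) and is an $A$-$B$-bimodule, the left and right $A$-actions on $\Delta\otimes_B\Delta'$ live on the two outer $U$-factors and commute formally; $\iota$-compatibility via \cref{cells are module iso}(3) then handles the right side. Since you already cite all the needed ingredients, the fix is simply to reassign \cref{bimodule actions commute} to the $A$-$B$-bimodule commutativity and drop the disjoint-indices claim — but as currently written that step does not stand.
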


\begin{proof}
Fix some indexing $\lambda^1,\dots,\lambda^K$ of $\Lambda$ such that if $\lambda^i>\lambda^j$ then $i<j$.

We have $\hecpmq=\bigoplus_{k=1}^K\hecpmq^{\lambda^k}$ and $\schpmq=\bigoplus_{k=1}^K\schpmq^{\lambda^k}$. Furthermore, by \cref{cells respect involution}, we have $\iota\left(\hecpmq^{\lambda^{k}}\right)=\hecpmq^{\lambda^{k}}$ and $\iota\left(\schpmq^{\lambda^{k}}\right)=\schpmq^{\lambda^{k}}$, and by \cref{cells are ideals} the submodules $\hecpmq^k=\bigoplus_{k'=1}^k\hecpmq^{\lambda^{k'}}$ and $\schpmq^k=\bigoplus_{k'=1}^k\schpmq^{\lambda^{k'}}$ are two-sided ideals in $\hecpmq$ and $\schpmq$ respectively.

To show that the $\hecpmq^{\lambda^{k}}$ and $\schpmq^{\lambda^{k}}$ are affine cell ideals in $\hecpmq/\hecpmq^{k-1}$ and $\schpmq/\schpmq^{k-1}$ respectively, it suffices to show they are affine cell ideals in the larger quotients $\hecpmq/\hecpmq^{>\lambda_{k}}$ and $\schpmq/\schpmq^{>\lambda_{k}}$ respectively. Henceforth we just write $\lambda$ for $\lambda_k$.

By \cref{number of left cells}, $U$ and $V$ have rank equal to the number of left cells in $\hecpmq^{\lambda}$ and $\schpmq^{\lambda}$ respectively. Write the standard basis for these spaces as $v_1,\dots,v_{n_{\lambda}}$ and $u_1,\dots\,u_{m_{\lambda}}$ respectively. Consider $V$ as a submodule of $U$ via $v_i\mapsto u_i$. Also write $B=\tcupmq$, noting that then $B=\left[\asu_{w(y_{ij})},\asu_{w(y_{im_i})}^{-1}\middle|1\leq i\leq p,1\leq j\leq m_i\right]$ by \cref{properties of B}.

By \cref{cells are module iso}, $\alpha^{-1}$ and $\beta^{-1}$ are module isomorphisms, justifying the notation $\alpha^{-1}$ and $\beta^{-1}$. Observe that $\alpha$ is the restriction to $\hecpmq^{\lambda}$ of $\beta$. Furthermore, again by \cref{cells are module iso}, $\iota\beta^{-1}(u_i\otimes \asu_w\otimes u_j)=\beta^{-1}(u_j\otimes \sigma(\asu_w)\otimes u_j)$.

By construction, $\alpha^{-1}\left(V\otimes_{\zpmq}B\otimes v_j\right)=\Gamma_j$, which by \cref{left cells are left ideals} is a left $\hecpmq/\hecpmq^{>\lambda}$-ideal, and similarly $\beta^{-1}\left(U\otimes_{\zpmq}B\otimes u_j\right)=\Phi_j$, which is a left $\schpmq/\schpmq^{>\lambda}$-ideal. Using \cref{cells are module iso} another time, we have $\phi_{\Gamma_j}(\alpha^{-1}(v_i\otimes\asu_{w}\otimes v_j))=\alpha^{-1}(v_i\otimes\asu_{w}\otimes v_1)$ and $\phi_{\Psi_j}\left(\beta^{-1}(u_i\otimes\asu_{w}\otimes u_j)\right)=\beta^{-1}(u_i\otimes\asu_{w}\otimes u_1)$, and by \cref{left cells are iso} these are left $\hecpmq/\hecpmq^{>\lambda}$-module isomorphisms and left $\schpmq/\schpmq^{>\lambda}$-module isomorphisms respectively. Thus the respective left actions of $\hecpmq/\hecpmq^{>\lambda}$ and $\schpmq/\schpmq^{>\lambda}$ on $V\otimes_{\zpmq} B\otimes_{\zpmq} V$ and $U\otimes_{\zpmq} B\otimes_{\zpmq}U$ come from well-defined left actions on $V\otimes_{\zpmq} B$ and $U\otimes_{\zpmq} B$ respectively.

Finally, by \cref{bimodule actions commute}, the right regular $B$-action and left $\schpmq/\schpmq^{>\lambda}$-action on $\Psi^{\lambda}=\beta^{-1}(U\otimes_{\zpmq}B\otimes u_1)$ commute. Hence, by restriction, so do the right regular $B$-action and left $\hecpmq/\hecpmq^{>\lambda}$-action on $\Gamma^{\lambda}=\beta^{-1}(V\otimes_{\zpmq}B\otimes v_1)$.
\end{proof}

We finish by observing that this affine cellular structure has further idempotence properties. Write $\hec_{p_P^{-1}}=\hecpmq\otimes_{\zpmq}\zpmq\left[p_P^{-1}\right]$.

\begin{thm}\label{schur cells idempotent}
The two-sided ideal $\schpmq^{\lambda}$ in $\schpmq/\schpmq^{>\lambda}$ is generated by the nonzero idempotent $\klu_{P_{\lambda}P_{\lambda}}^{w_{\lambda}}$.

Furthermore, two-sided ideal $\hec_{p_P^{-1}}^{\lambda}$ in $\hec_{p_P^{-1}}/\hec_{p_P^{-1}}^{>\lambda}$ is generated by the nonzero idempotent $\frac{1}{p_{P_{\lambda}}}\klu_{w_{\lambda}}$.
\end{thm}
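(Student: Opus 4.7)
The plan is to verify in turn: idempotence, non-vanishing modulo $\schpmq^{>\lambda}$ (respectively $\hec_{p_P^{-1}}^{>\lambda}$), and generation of the two-sided ideal $\schpmq^\lambda$ (respectively $\hec_{p_P^{-1}}^\lambda$) in the quotient.

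The starting observation will be that $\klu_{w_\lambda}=x_{P_\lambda}$. Since $w_\lambda$ is the longest element of the finite parabolic $\weyl_{P_\lambda}$, every $y\leq w_\lambda$ lies in $\weyl_{P_\lambda}$, and the Kazhdan-Lusztig polynomials for the longest element of a finite Coxeter group satisfy $P_{y,w_\lambda}=1$, so $\klu_{w_\lambda}=\sum_{y\in\weyl_{P_\lambda}}T_y=x_{P_\lambda}$. This immediately gives $\klu_{w_\lambda}^2=x_{P_\lambda}^2=p_{P_\lambda}\klu_{w_\lambda}$, so $\frac{1}{p_{P_\lambda}}\klu_{w_\lambda}$ is idempotent in $\hec_{p_P^{-1}}$; for the Schur algebra, applying \cref{schur algebra structure coefficients} to $e\cdot e$ with $e=\klu_{P_\lambda P_\lambda}^{w_\lambda}$ and using $\scu_{w_\lambda,w_\lambda}^{w_\lambda}=p_{P_\lambda}$ (with all other $\scu_{w_\lambda,w_\lambda}^w=0$) yields $e^2=e$. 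Non-vanishing in the respective quotients is immediate from \cref{w lambda in h lambda} (which places $\klu_{w_\lambda}\in\hecany^\lambda$) together with \cref{hecke and shcur cell relations}.

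For generation I will use the affine cellular structure of \cref{schur affine cellular}. Under the isomorphism $\beta:\schpmq^\lambda/\schpmq^{>\lambda}\xrightarrow{\sim}U\otimes_{\zpmq}B\otimes_{\zpmq}U$, I claim $\beta(e)=u_{i_0}\otimes 1\otimes u_{i_0}$, where $i_0$ indexes the left cell $\Psi_{i_0}$ containing $e$, corresponding to the pair $(\Gamma^\lambda,P_\lambda)$; this coincides with the right cell index since $\iota(e)=e$, as $w_\lambda^{-1}=w_\lambda$. This identification rests on two facts: first, because $\Gamma^\lambda$ is the base cell, the star operations and $\omega$-shifts defining $\phi_{\Psi_{i_0}}$ are trivial, so a direct computation gives $\phi_{\Psi_{i_0}\cap\iota(\Psi_{i_0})}(e)=\klu_{w_\lambda}$; and second, $\asu_{w_\lambda}=\asn_{w_\lambda}$ is the multiplicative identity of $\tcany$, which one verifies by specialising the product formula in \cref{normalised asymptotic cell} at $x=0$. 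Transferring $e^2=e$ through $\beta$ will then force the bilinear pairing $\langle u_{i_0},u_{i_0}\rangle=1\in B$ arising from the cell multiplication, so any $u_i\otimes b\otimes u_j$ can be realised as $(u_i\otimes b\otimes u_{i_0})\cdot e\cdot(u_{i_0}\otimes 1\otimes u_j)$, showing the two-sided ideal generated by $e$ is all of $\schpmq^\lambda/\schpmq^{>\lambda}$.

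For the Hecke algebra, an analogous argument using $\alpha$ in place of $\beta$ shows $\klu_{w_\lambda}$ maps to $v_1\otimes 1\otimes v_1$, but with $\langle v_1,v_1\rangle=p_{P_\lambda}$ (as dictated by $\klu_{w_\lambda}^2=p_{P_\lambda}\klu_{w_\lambda}$); the generation calculation then produces an extra factor $p_{P_\lambda}^2$ which must be inverted, explaining the need to pass to $\hec_{p_P^{-1}}$. The main obstacle will be the explicit identification $\beta(e)=u_{i_0}\otimes 1\otimes u_{i_0}$, requiring careful unwinding of the composition defining $\phi_{\Psi_{i_0}\cap\iota(\Psi_{i_0})}$ together with confirmation that $\asn_{w_\lambda}$ is the unit of $\tcany$; once these are in hand, the remainder is a routine matrix-unit computation within the cellular algebra.
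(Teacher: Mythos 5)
Your proposal is correct and follows essentially the same strategy as the paper: establish idempotence, then use the cellular isomorphism $\beta$ to identify $\klu_{P_\lambda P_\lambda}^{w_\lambda}$ with $u_k\otimes 1\otimes u_k$, transfer idempotence to conclude the bilinear pairing $(u_k,u_k)=1$, and then express any $u_i\otimes b\otimes u_j$ as a product through that matrix unit. The only substantive differences are minor. For idempotence, you go Hecke-to-Schur via the identity $\klu_{w_\lambda}=x_{P_\lambda}$ (using the classical fact $P_{y,w_0}=1$ for the longest element of a finite Coxeter group) and then apply \cref{schur algebra structure coefficients}; the paper instead observes directly that $\klu_{P_\lambda P_\lambda}^{w_\lambda}=T_{P_\lambda P_\lambda}^{w_\lambda}$ is the identity map on $x_{P_\lambda}\hecany$ (so only $P_{w_\lambda,w_\lambda}=1$ is needed), and then deduces $\klu_{w_\lambda}^2=p_{P_\lambda}\klu_{w_\lambda}$ from the Schur structure constants. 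For generation of the Hecke cell you run $\alpha$ directly, tracking the factor $p_{P_\lambda}^2$; the paper instead reduces to the Schur case via the identity $\klu_{P_\lambda\emptyset}^{w_\lambda}\klu_{w_\lambda}\klu_{\emptyset P_\lambda}^{w_\lambda}=p_{P_\lambda}^2\klu_{P_\lambda P_\lambda}^{w_\lambda}$. Both routes are valid, and both share the small gap of asserting (without spelled-out proof) that $\asn_{w_\lambda}$ is the unit of $\tcany$; your suggestion that this follows directly from specialising the product formula of \cref{normalised asymptotic cell} at $x=0$ is not quite immediate (that formula multiplies by generators $t_{w(y_{ij})}$, not by $t_{w(0)}$), so one should instead cite that $w_\lambda$ is a distinguished involution or appeal to the identification with the representation ring of $\prod_i\mathrm{GL}_{m_i}(\mathbb{C})$, where $0\in\domin(\lambda)$ corresponds to the trivial representation.
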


\begin{proof}
 By \cref{w lambda in h lambda} we have $\klu_{w_{\lambda}}\in\hecany^{\lambda}$, and so by \cref{hecke and shcur cell relations} we also have $\klu_{P_{\lambda}P_{\lambda}}^{w_{\lambda}}\in\schany^{\lambda}$.

 Now, let $P\subseteq\fingen$. Then $\klu_{PP}^{w_P}$ is a sum of $T_{PP}^{y}$ for $y\leq w_P$ of maximal length in $\weyl_{P}$. But the only such $y$ is $w_P$, and $P_{w,w}=1$ for any $w$, so $\klu_{PP}^{w_P}=T_{PP}^{w_P}=1_{x_{P}\hecany}$, the identity map on $x_{P}\hecany$, and so is idempotent.
 
 Now, $\klu_{PP}^{w_P}$ being idempotent exactly means that $\scu_{w_PPw_P}^{PvP}=\delta_{vw}$. Hence $\scu_{w_Pw_P}^v=p_{P}\scu_{w_PPw_P}^{PvP}=p_P\delta_{vw_P}$, so we thus have $\klu_{w_P}\klu_{w_P}=p_P\klu_{w_P}$, and so $\frac{1}{p_P}\klu_{w_P}$ is also idempotent in $\hec_{p_P^{-1}}$. Furthermore,
 \begin{align*}
     \scu_{w_P\emptyset w_P}^{Pv\emptyset}=\scu_{w_P\emptyset w_P}^{\emptyset vP}=\scu_{w_Pw_P}^{v}=p_P\delta_{vw_P}
 \end{align*}
 and so $\klu_{P\emptyset}^{w_P}\klu_{w_P}\klu_{\emptyset P}^{w_P}=p_P^2\klu_{PP}^{w_P}$. Hence if $\klu_{P_{\lambda}P_{\lambda}}^{w_{\lambda}}$ generates $\schpmq^{\lambda}$ then $\klu_{w_{P_{\lambda}}}$ generates $\hec_{p_P^{-1}}^{\lambda}$.
 
 Now consider a general basis element $\klu_{QP}^w\in\schpmq^{\lambda}$. Write
 \begin{align*}
 \beta\left(\klu_{QP}^w\right)=q^{m_1}u_i\otimes b\otimes u_j\quad \text{and} \quad \beta\left(\klu_{P_{\lambda}P_{\lambda}}^{w_{\lambda}}\right)=u_k\otimes 1\otimes u_k.     
 \end{align*}
 Define $q^{m_2}\klu_{QP_{\lambda}}^u=\beta^{-1}\left(u_i\otimes b\otimes u_k\right)$ and $q^{m_3}\klu_{P_{\lambda}P}^v=\beta^{-1}\left(u_k\otimes 1\otimes u_j\right)$. Observe that in fact $q^{m_1+m_2+m_3}=1$. Now, since $\klu_{P_{\lambda}P_{\lambda}}^{w_{\lambda}}$ is idempotent, we have $(u_k\otimes 1\otimes u_k)^2=u_k\otimes 1\otimes u_k$, and so $(u_k,u_k)=1$. Hence in $\schpmq/\schpmq^{>\lambda}$ we have
 \begin{align*}
     \klu_{QP_{\lambda}}^u\klu_{P_{\lambda}P_{\lambda}}^{w_{\lambda}}\klu_{P_{\lambda}P}^v
     &= q^{-(m_2+m_3)}\beta^{-1}\left((u_i\otimes b\otimes u_k)(u_k\otimes 1\otimes u_k)(u_k\otimes 1\otimes u_j)\right) \\
     &= q^{m_1}\beta^{-1}\left(u_i\otimes b\otimes u_j\right) \\
     &= \klu_{QP}^w.
 \end{align*}
\end{proof}

\begin{thm}
Let $R$ be a Noetherian domain and a $\zpmq$-algebra. Then $\sch_R=\schpmq\otimes_{\zpmq}R$ satisfies the conditions of \cite[Theorem 4.4]{koenig2012affine}. Hence, the derived category of $\sch_R$ admits a stratification by the derived categories of the
\begin{align*}
    \tcupmq\otimes_{\zpmq}R=R\left[\asu_{w(y_{ij})},\asu_{w(y_{im_i})}^{-1}\middle|1\leq i\leq p,1\leq j\leq m_i\right],
\end{align*}
and if $R$ has finite global dimension then so does $\sch_R$.

If furthermore $p_{\fingen}$ is invertible in $R$, then the analogous claims hold for $\hec_R=\hecpmq\otimes_{\zpmq}R$.
\end{thm}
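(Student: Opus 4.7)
The plan is to apply \cite[Theorem 4.4]{koenig2012affine} directly, using the affine cellular structure on $\schpmq$ established in \cref{schur affine cellular} together with the idempotent generation of cell ideals from \cref{schur cells idempotent}. The substance of the argument is already contained in these two theorems; what remains is bookkeeping to transfer the conclusion to $\sch_R$ for the specified class of base rings.

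First I would check that the affine cellular data descends through the base change $-\otimes_{\zpmq} R$. Since the free modules $U$, $V$, and $B=\tcupmq$ appearing in \cref{schur affine cellular} are all free over $\zpmq$, and the anti-involution $\iota$ together with the module isomorphisms $\alpha$, $\beta$ are $\zpmq$-linear, tensoring with $R$ yields an affine cellular structure on $\sch_R$ whose cell ideals are $\sch_R^{\lambda}=\schpmq^{\lambda}\otimes_{\zpmq}R$, whose asymptotic algebras are
\begin{align*}
B\otimes_{\zpmq}R=R\left[\asu_{w(y_{ij})},\asu_{w(y_{im_i})}^{-1}\,\middle|\,1\leq i\leq p,\,1\leq j\leq m_i\right],
\end{align*}
and whose cell ideals $\sch_R^{\lambda}$ in $\sch_R/\sch_R^{>\lambda}$ remain generated by the idempotent $\klu_{P_{\lambda}P_{\lambda}}^{w_{\lambda}}\otimes 1$ by \cref{schur cells idempotent}. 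This verifies the hypotheses of \cite[Theorem 4.4]{koenig2012affine}.

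The output of that theorem is precisely the stratification of the derived category of $\sch_R$ by the derived categories of the $B\otimes_{\zpmq}R$. For the finite global dimension statement, each $B\otimes_{\zpmq}R$ is obtained from $R$ by adjoining finitely many polynomial and Laurent-polynomial indeterminates, so its global dimension is bounded by that of $R$ plus the number of generators; hence if $R$ has finite global dimension so does every stratum, and the global dimension conclusion of \cite[Theorem 4.4]{koenig2012affine} then gives the same for $\sch_R$.

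For the Hecke algebra, the only additional step is to verify that invertibility of $p_{\fingen}$ in $R$ implies invertibility of each $p_{P_{\lambda}}$. This is the classical divisibility of Poincaré polynomials of parabolic subgroups: writing $\weyl_{\fingen}$ as a disjoint union of $\weyl_{P_{\lambda}}$-cosets with minimal-length representatives shows that $p_{P_{\lambda}}$ divides $p_{\fingen}$ in $\z[q]$, so that $p_{P_{\lambda}}$ is invertible in $R$ whenever $p_{\fingen}$ is. The idempotents $\frac{1}{p_{P_{\lambda}}}\klu_{w_{\lambda}}$ of \cref{schur cells idempotent} therefore live in $\hec_R$, and the same reasoning as above yields the Hecke version. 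I do not expect any real obstacle beyond this; the heavy lifting is already done by \cref{schur affine cellular} and \cref{schur cells idempotent}, and the remaining step is a mechanical check that the hypotheses of \cite[Theorem 4.4]{koenig2012affine} match the data at hand.
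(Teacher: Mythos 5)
Your proof follows the same overall strategy as the paper (base change the affine cellular structure from \cref{schur affine cellular}, verify the hypotheses of König--Xi Theorem 4.4, and for the Hecke case appeal to divisibility of Poincar\'e polynomials), but there is a genuine gap: you never verify the hypothesis of \cite[Theorem 4.4]{koenig2012affine} that each affine cell algebra has trivial Jacobson radical. You check that $\sch_R$ is affine cellular and that the cell ideals remain generated by idempotents, then declare "this verifies the hypotheses," but the Jacobson radical condition on the commutative algebras $\tcupmq\otimes_{\zpmq}R$ is a separate requirement and is left unaddressed.

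This omission is exactly where the "Noetherian domain" hypothesis on $R$ does its work, and it is telling that your argument never actually uses the domain assumption. Since $R$ is a domain, $\tcupmq\otimes_{\zpmq}R$ is a polynomial/Laurent-polynomial ring over a domain, hence itself a domain; in particular it is reduced, and a polynomial (or Laurent polynomial) ring over a reduced commutative ring has zero Jacobson radical. The paper makes precisely this point (``is a domain and hence has zero Jacobson radical''). Without it the appeal to \cite[Theorem 4.4]{koenig2012affine} is not justified. The rest of your argument --- base change via freeness (or, as the paper does, via \cite[Lemma 2.4]{koenig2012affine}), the global-dimension bound for the Laurent-polynomial algebras, and the observation that $p_{P_{\lambda}}$ divides $p_{\fingen}$ for the Hecke case --- is correct and matches the paper's reasoning.
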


\begin{proof}
 By \cite[Lemma 2.4]{koenig2012affine} and \cref{schur affine cellular}, $\sch_R$ is affine cellular, with affine cell ideals $\sch_R^{\lambda}=\schpmq^{\lambda}\otimes_{\zpmq}R\cong U_{R}\otimes\left(\tcupmq\otimes_{\zpmq}R\right)\otimes_R U_R$, where $U_R=U\otimes_{\zpmq}R$. But $\tcupmq\otimes_{\zpmq}R=R\left[\asu_{w(y_{ij})},\asu_{w(y_{im_i})}^{-1}\middle|1\leq i\leq p,1\leq j\leq m_i\right]$ is a domain and hence has zero Jacobson radical. Furthermore, it has global dimension $R+\sum_{i=1}^pm_i=R+\lambda_1$ by standard results: see for example \cite[Theorem 7.5.3 (iii,iv)]{mcconnell2001noncommutative}.
 
 The arguments for $\hec_R$ are the same, noting that if $P\subseteq\fingen$ then $p_P$ divides $p_{\fingen}$.
\end{proof}

\printbibliography

\end{document}